\newtheorem{Theorem}{Theorem}[section]
\newtheorem{Proposition}[Theorem]{Proposition}
\newtheorem{Lemma}[Theorem]{Lemma}
\newtheorem{Corollary}[Theorem]{Corollary}
\theoremstyle{definition}
\theoremstyle{remark}
\numberwithin{equation}{section}
\newcommand{\Z}{{\mathbb Z}}
\newcommand{\R}{{\mathbb R}}
\newcommand{\C}{{\mathbb C}}
\newcommand{\N}{{\mathbb N}}
\newcommand{\J}{{\mathcal J}}
\newcommand{\RR}{{\mathcal R}}
\renewcommand{\H}{{\mathcal H}}
\begin{document}

\title[Reflectionless Jacobi matrices]{Approximation results
for reflectionless Jacobi matrices}

\author{Alexei Poltoratski}

\address{Mathematics Department\\
Texas A\&M University\\
College Station, TX 77843}

\email{alexei@math.tamu.edu}

\urladdr{www.math.tamu.edu/$\sim$alexei.poltoratski/}

\author{Christian Remling}

\address{Mathematics Department\\
University of Oklahoma\\
Norman, OK 73019}

\email{cremling@math.ou.edu}

\urladdr{www.math.ou.edu/$\sim$cremling}

\date{May 10, 2010}

\thanks{2000 {\it Mathematics Subject Classification.} Primary 47B36 81Q10;
Secondary 30E20}

\keywords{Jacobi matrix, Herglotz function, reflectionless measure,
Krein function, Hausdorff distance}

\thanks{AP's work supported by NSF grant 0800300; CR's work supported
by NSF grant DMS 0758594}
\begin{abstract}
We study spaces of reflectionless Jacobi matrices. The main theme is
the following type of question: Given a reflectionless Jacobi matrix,
is it possible to approximate it by other reflectionless and, typically, simpler
Jacobi matrices of a special type? For example, can we approximate by periodic
operators?
\end{abstract}
\maketitle
\section{Introduction}
We continue our study of reflectionless Jacobi matrices, which was begun in
\cite{PolRem,Remac}.
In this paper, by a \textit{Jacobi matrix }we mean a
bounded, self-adjoint operator $J$ on $\ell^2(\Z)$ (always whole line!) which
acts as follows:
\[
(Ju)(n) = a(n)u(n+1)+a(n-1)u(n-1)+b(n)u(n)
\]
Alternatively, one can represent $J$ by the following tridiagonal matrix with respect to
the standard basis of $\ell^2(\Z)$:
\[
J = \begin{pmatrix} \ddots & \ddots & \ddots &&&& \\ & a(-2) & b(-1) & a(-1) &&&\\
&& a(-1) & b(0) & a(0) && \\ &&& a(0) & b(1) & a(1) & \\ &&&& \ddots & \ddots & \ddots
\end{pmatrix}
\]
Here, $a(n)\ge 0$ and $b(n)\in\R$, and we also assume that $a,b\in\ell^{\infty}(\Z)$.
(Usually, one insists that $a(n)>0$, but for reasons of formal elegance, our convention
seems preferable here.)
The set of all such Jacobi matrices will be denoted by $\mathcal J$.

A Jacobi matrix is called \textit{reflectionless }on a bounded Borel
set $B\subset\R$ if for all $n\in\Z$,
\[
\textrm{\rm Re }g_n(t) = 0 \quad \textrm{\rm for almost every }t\in B ,
\]
where $g_n(z)=\langle \delta_n , (J-z)^{-1} \delta_n \rangle$, and, as usual,
$\delta_n(j)=1$ if $j=n$ and $=0$ if $j\not= n$. It will be
convenient to denote the set of reflectionless (on $B$) Jacobi matrices by
$\mathcal R (B)$.

Reflectionless Jacobi matrices are of special interest because they provide
the basic building blocks for arbitrary Jacobi matrices with non-empty absolutely
continuous spectrum. See \cite{Remac}.

We will usually work with topological spaces of reflectionless Jacobi matrices
rather than single operators. These spaces will always be contained in
\[
\J_R = \{ J\in\J: \|J\|\le R \}
\]
for some $R>0$.
The topology we are interested in can be described
as product topology on the coefficients; it is induced by the metric
\[
d(J,J') = \sum_{n=-\infty}^{\infty} 2^{-|n|} \left( |a(n)-a'(n)| + |b(n)-b'(n)| \right) .
\]
This is also the topology that is induced on $\J_R$ by the weak or strong operator topology.
This topology is by far the most useful one for the questions we are interested
in here for many reasons, not the least of which are
its smooth interaction with other natural topologies and the fact that it makes
$\J_R$ a compact space.
These two themes will play a prominent role throughout this paper.

A Jacobi matrix is called \textit{periodic }if its coefficients have this property,
that is, $a(n+p)=a(n)$, $b(n+p)=b(n)$ for all $n\in\Z$ and some period $p\in\N$.
It is well known that periodic Jacobi matrices are reflectionless on their spectrum, which
is a \textit{finite gap set }(a union of finitely many compact intervals).
Our first main result says that, conversely, any $J\in \mathcal R (B)$ can be approximated
by periodic operators with spectrum almost equal to $B$. More precisely:
\begin{Theorem}
\label{T1.1}
$J\in \mathcal R (B)$ if and only if there are $R>0$ and
periodic Jacobi operators $J_n\in\J_R$ with
spectra $\sigma(J_n)=P_n$, such that $d(J_n,J)\to 0$ and $|B\Delta P_n|\to 0$.
\end{Theorem}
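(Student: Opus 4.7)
The proof splits into the two directions of the equivalence.

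\textbf{The ``if'' direction.} Periodic Jacobi matrices are reflectionless on their spectrum, so each $J_n \in \RR(P_n)$. I would conclude $J \in \RR(B)$ by a direct continuity argument. The Green functions $g_k^{J_n}(z)$ converge locally uniformly on $\C^+$ to $g_k^J(z)$, since $d$-convergence on $\J_R$ coincides with strong operator convergence. The reflectionless condition $\text{Re}\,g_k^{J_n}(t) = 0$ a.e.\ on $P_n$ means that the harmonic function $u_n := \text{Re}\,g_k^{J_n}$ on $\C^+$ has vanishing boundary values a.e.\ on $P_n$; passing to the limit, a standard Poisson-integral argument combined with $|B \Delta P_n| \to 0$ then yields $\text{Re}\,g_k^J(t) = 0$ a.e.\ on $B$, i.e.\ $J \in \RR(B)$.

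\textbf{The ``only if'' direction} is the substantial one. The plan is a three-layer approximation. First, reduce to the finite-gap case: approximate $B$ by finite unions of compact intervals $F_n \subset B$ (essentially) with $|B \setminus F_n| \to 0$. Since $\RR(B) \subset \RR(F_n)$, we have $J \in \RR(F_n)$, but we need a reflectionless operator whose spectrum equals $F_n$, i.e.\ one in the isospectral torus $\mathcal{T}(F_n) := \{J' \in \RR(F_n) : \sigma(J') = F_n\}$. Second, perturb the band endpoints of $F_n$ slightly to obtain a finite-gap set $F_n'$ whose band harmonic measures are all rational with common denominator $p_n$; the inverse spectral correspondence is continuous in the band data, so the approximant in $\mathcal{T}(F_n)$ has a counterpart $\widetilde{J}_n \in \mathcal{T}(F_n')$ still close to $J$. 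Third, invoke the classical density of $p_n$-periodic operators with spectrum $F_n'$ inside $\mathcal{T}(F_n')$ (which holds exactly because the harmonic measures are rational) to select a periodic $J_n$ with $\sigma(J_n) = F_n'$ and $d(J_n, \widetilde{J}_n) < \epsilon$. Then $|F_n' \Delta B| \to 0$ and $d(J_n, J) \to 0$, as required.

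\textbf{The main obstacle} is the first layer of the ``only if'' direction: extracting, from a general $J \in \RR(B)$ whose spectrum may be complicated outside $B$, a nearby operator in the finite-dimensional isospectral torus $\mathcal{T}(F_n)$. This is more than routine continuity, since we must truly modify the spectral data on $B \setminus F_n$ while preserving reflectionlessness on $F_n$ and keeping the coefficients close in the product topology. I expect this to rely on the Herglotz and Krein-function machinery developed in \cite{PolRem,Remac}, which describes the reflectionless class via boundary values and Nevanlinna data in a way that should make such a ``projection'' onto the isospectral torus continuous. The remaining rationalization of harmonic measures and the density of periodic operators in $\mathcal{T}(F_n')$ are then classical facts from finite-gap inverse spectral theory.
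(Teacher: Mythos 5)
Your ``if'' direction is fine in spirit: the paper's own argument (Proposition \ref{P3.1}(a)) encapsulates the same content via compactness of $\RR(A)\cap\J_R$ for $A=\bigcap(B\cap B_n)$, which is ultimately proved by the kind of Poisson-kernel semicontinuity you gesture at; either route works.

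The ``only if'' direction, however, has a real gap at its very first step. You propose to approximate $B$ \emph{from inside} by finite unions of compact intervals $F_n\subset B$ with $|B\setminus F_n|\to 0$. But $B$ is an arbitrary positive-measure Borel set and may have empty interior (e.g.\ a fat Cantor set), in which case it contains no nondegenerate interval and no such $F_n$ exists. Even when $B$ does contain intervals, the approach has a second problem: operators in the isospectral torus $\mathcal T(F_n)$ have spectrum \emph{exactly} $F_n$, so their spectral measures $\rho_n$ are carried by $F_n\cup(\text{finitely many points in the gaps})$; there is no mechanism in your sketch for these to converge (in the weak-$*$ sense required by the parametrization) to the measure $\rho$ of $J$, which can charge all of $(-R,R)$ and may have a substantial singular part and point masses far from $F_n$. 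You identify this as the ``main obstacle'' and defer it to the Herglotz/Krein machinery of \cite{PolRem,Remac}, but that machinery by itself does not give a continuous projection onto $\mathcal T(F_n)$; producing the approximant is precisely the hard content of the theorem.

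The paper proceeds in the opposite direction: it first normalizes $B$ so that $\rho_s(B)=0$ and $B$ is compact without isolated points, then chooses a finite-gap superset $A_0\supset B$ with $\rho(\partial A_0)=0$ and $\rho(A_0\setminus B)<\epsilon$, and constructs the approximating sets $P_n\supset B$ by subdividing each gap of $B$ inside $A_0$ into many small gaps separated by tiny ``bands.'' On these small gaps the Krein function $\xi_n$ is chosen as a step function with the correct local average, so that $\xi_n\,dx\to\xi\,dx$ and hence $\rho_n\to\rho$. The key technical tools are Lemma \ref{L3.2} (bands carry little $\rho_n$-mass) and the Splitting Lemma \ref{L3.3}, which lets one split a point mass in a gap into two nearby ones with a prescribed weight ratio; this is what makes it possible to match the $\nu_+$ data, since for $J_n\in\RR_0(P_n)$ each point mass must go entirely into $\nu_{n,+}$ or $\nu_{n,-}$. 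Your final ``rationalize the harmonic measures and invoke density of periodic operators'' step does agree with the paper's endgame (via \cite{Bogat,Pehers,Totik} and Theorem \ref{T4.6}), but without the outside approximation and the band/splitting construction the earlier layers do not get off the ground.
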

Here, $B\Delta P$ denotes the symmetric difference of $B$ and $P$ (the set of all points
that are in one set, but not in the other), and $|\cdot|$ refers to Lebesgue measure.

The main interest lies in the ``only if'' part; the converse
statement is an immediate
consequence of the fact that $\mathcal R(B)\cap\J_R$ is a compact set
with respect to $d$; see Proposition \ref{P3.1}(a) below for more details.
We have stated this converse for completeness.

Our original motivation for Theorem \ref{T1.1} came from Corollary \ref{C1.1} below.
Let us first formulate an abstract version of this statement.
We define the action of the (left) shift $S$ on $J\in\mathcal J$ in the obvious way:
$J'=SJ$ has coefficients $a'(n)=a(n+1)$, $b'(n)=b(n+1)$.
\begin{Theorem}
\label{T1.2}
Let $\varphi: \mathcal J\to \mathcal J$ be a continuous (with
respect to $d$, as always) map that preserves spectra and commutes with the shift:
$\sigma(\varphi(J))=\sigma(J)$, $\varphi S=S\varphi$. Then $\varphi(J)\in\mathcal R (B)$
whenever $J\in \mathcal R (B)$.
\end{Theorem}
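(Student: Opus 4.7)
The plan is to deduce Theorem \ref{T1.2} from Theorem \ref{T1.1} in a very direct way. Given $J\in\mathcal R(B)$, use Theorem \ref{T1.1} to choose periodic Jacobi matrices $J_n\in\J_R$ with $\sigma(J_n)=P_n$, $d(J_n,J)\to 0$, and $|B\Delta P_n|\to 0$. The strategy is to show that $\varphi(J_n)$ is an approximating sequence of the same type for $\varphi(J)$, at which point the ``if'' direction of Theorem \ref{T1.1} immediately gives $\varphi(J)\in\mathcal R(B)$.

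For this, three checks are needed. First, periodicity: if $J_n$ has period $p_n$, then $S^{p_n}J_n=J_n$, and applying $\varphi$ together with $\varphi S=S\varphi$ yields $S^{p_n}\varphi(J_n)=\varphi(J_n)$, so $\varphi(J_n)$ is again periodic with period dividing $p_n$. Second, spectra: by hypothesis $\sigma(\varphi(J_n))=\sigma(J_n)=P_n$, so the symmetric-difference condition $|B\Delta\sigma(\varphi(J_n))|\to 0$ is inherited for free. Third, uniform boundedness: since $J_n$ is self-adjoint and $\varphi$ preserves spectra, $\|\varphi(J_n)\|=\|J_n\|\le R$, so the sequence $\varphi(J_n)$ stays in $\J_R$ as required for applying Theorem \ref{T1.1}. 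Finally, continuity of $\varphi$ with respect to $d$ gives $d(\varphi(J_n),\varphi(J))\to 0$.

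Putting these together, $\varphi(J_n)$ is a sequence of periodic operators in $\J_R$ with spectra $P_n$ such that $d(\varphi(J_n),\varphi(J))\to 0$ and $|B\Delta P_n|\to 0$. The ``if'' direction of Theorem \ref{T1.1} (equivalently, the compactness of $\mathcal R(B)\cap\J_R$ referenced in the remarks) now delivers $\varphi(J)\in\mathcal R(B)$.

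There is really no serious obstacle here once Theorem \ref{T1.1} is in hand: the entire content of the argument is that approximation by periodic operators is a spectral/dynamical notion that $\varphi$ respects term by term. The only mild subtlety worth flagging is why the target sequence $\varphi(J_n)$ lies in a common $\J_R$, which is why the spectrum-preservation hypothesis is used in two essentially different ways (to control norms and to control the spectra $P_n$). Everything else is automatic from the three hypotheses on $\varphi$.
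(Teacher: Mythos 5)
Your proof is correct and follows essentially the same route as the paper: approximate $J$ by periodic operators via Theorem \ref{T1.1}, observe that $\varphi$ preserves periodicity (via $\varphi S = S\varphi$), spectra, and convergence, and then invoke the ``if'' direction of Theorem \ref{T1.1}, which is exactly Proposition \ref{P3.1}(a) as used in the paper. Your explicit check that $\|\varphi(J_n)\|\le R$ (via spectrum preservation and self-adjointness) is a point the paper leaves implicit and is worth noting.
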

The application we have in mind here is to Toda flows, as spelled out in Corollary \ref{C1.1} below.
Let us quickly recall the basic setup here;
for more background information on Toda flows, see, for example, \cite{Teschl}.
For any (real valued) polynomial $p$, we can define an associated Toda flow, as follows.
Let $p_a(J)$ be the anti-symmetric part of $p(J)$: write $p(J)$ as a matrix with respect
to the standard basis of $\ell^2(\Z)$, change the signs in the lower triangular part and
delete the diagonal to obtain $p_a(J)$. Then the differential equation
\[
\dot{J} = [p_a(J),J]
\]
defines a global flow on $\J$, which we call the \textit{Toda flow }associated with $p$.
\begin{Corollary}
\label{C1.1}
If $J(0)\in\mathcal R (B)$, then
$J(t) \in \mathcal R (B)$ for any Toda flow and all times $t\in\R$.
\end{Corollary}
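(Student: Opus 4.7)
The plan is to apply Theorem~\ref{T1.2} to the time-$t$ map $\varphi_t$ of the Toda flow associated with $p$. This reduces the problem to verifying three properties of $\varphi_t\colon\mathcal J\to\mathcal J$: $d$-continuity, preservation of spectra, and commutation with the shift $S$.

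The two algebraic properties are routine. Preservation of spectra is the standard isospectrality of Toda flows: since $p_a(J)^*=-p_a(J)$, the Lax-pair reformulation of $\dot J=[p_a(J),J]$ integrates to $J(t)=U(t)J(0)U(t)^*$ with $U(t)$ unitary, so $\sigma(J(t))=\sigma(J(0))$ and, in particular, $\|J(t)\|$ is constant, which ensures that the flow also preserves each $\mathcal J_R$. For shift commutation, note that at the level of matrix entries in the standard basis the definition of $S$ translates into $(SJ)_{ij}=J_{i+1,j+1}$, and this identity is preserved both by polynomial functional calculus and by the anti-symmetric truncation (which depends only on the sign of $i-j$). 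Hence $p_a(SJ)=S\,p_a(J)$ and $[p_a(SJ),SJ]=S[p_a(J),J]$, so uniqueness of solutions to the Toda ODE yields $\varphi_t\circ S=S\circ\varphi_t$.

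The substantive step is continuity of $\varphi_t$ in $d$. The generator $J\mapsto[p_a(J),J]$ itself is immediately $d$-continuous on each $\mathcal J_R$, since every matrix entry of the output is a polynomial in only finitely many entries of $J$, with range of dependence controlled by $\deg p$. To promote this to continuity of the flow, I would exploit the fact, noted in the paper, that on $\mathcal J_R$ the metric $d$ coincides with the strong operator topology. Since $\|p_a(J)\|\le C_R$ uniformly on $\mathcal J_R$, standard Picard iteration in operator norm shows that for each $n$ and each finite time interval the vector $U(t)\delta_n$ depends SOT-continuously on $J(0)\in\mathcal J_R$. It then follows that $J(t)=U(t)J(0)U(t)^*$ depends $d$-continuously on $J(0)$, which is the remaining hypothesis of Theorem~\ref{T1.2}. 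This continuity argument is the only non-bookkeeping step; once it is in place, the corollary is immediate.
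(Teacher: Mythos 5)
Your proposal is correct and takes exactly the same approach as the paper: reduce the corollary to Theorem~\ref{T1.2} by checking that the time-$t$ Toda map preserves spectra, commutes with the shift, and is $d$-continuous. The paper itself merely asserts these properties of the Toda maps (and defers the $d$-continuity to a forthcoming reference), so you in fact supply more detail than the paper's own proof.
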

This follows because the \textit{Toda maps }$J=J(0)\mapsto J(t)$ (for fixed $t\in\R$ and
polynomial $p$) are known to have all the properties required in Theorem \ref{T1.2}.
(The continuity with respect to $d$ is perhaps not addressed explicitly in the existing
literature, but this is easy to establish. It will also be discussed in a forthcoming publication
\cite{RemToda}, and, in any event, this discussion would take us too far afield here.)

In the second part of this paper, we will try to analyze the spaces
\[
\RR_0(K) = \{ J\in\RR (K): \sigma(J)\subset K \} ,
\]
for compact subsets $K\subset\R$. If $K$ is essentially closed, then
all $J\in\RR_0(K)$ will in fact satisfy $\sigma(J)=K$.

For any compact $K\subset\R$, we have:
\begin{Proposition}
\label{P1.1}
$\RR_0(K)$ is compact.
\end{Proposition}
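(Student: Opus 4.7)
The plan is to realize $\RR_0(K)$ as a closed subset of the compact space $\J_R$. Pick $R>0$ large enough that $K\subset[-R,R]$. Any $J\in\RR_0(K)$ is self-adjoint with $\sigma(J)\subset K$, so $\|J\|=\sup\{|t|:t\in\sigma(J)\}\le R$; hence $\RR_0(K)\subset\J_R$. Since $\J_R$ is compact with respect to $d$, it suffices to show that $\RR_0(K)$ is closed in $\J_R$.

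To that end, decompose
\[
\RR_0(K) \,=\, \bigl(\RR(K)\cap\J_R\bigr)\,\cap\,\{J\in\J_R: \sigma(J)\subset K\}.
\]
The first factor is closed by Proposition \ref{P3.1}(a), which (as already announced in the introduction) says that $\RR(B)\cap\J_R$ is compact for any bounded Borel set $B$; in other words, the reflectionless condition survives $d$-limits. It therefore remains to prove that the spectral confinement $\sigma(J)\subset K$ defines a closed subset of $\J_R$.

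For this, suppose $J_n\in\J_R$ with $\sigma(J_n)\subset K$ and $d(J_n,J)\to 0$. Since on $\J_R$ the metric $d$ induces the strong operator topology, $J_n u\to Ju$ for every $u\in\ell^2(\Z)$; as a strong limit of self-adjoint operators with uniformly bounded norms, $J$ is itself self-adjoint and lies in $\J_R$. Fix any $\lambda\in\R\setminus K$ and set $\delta:=\operatorname{dist}(\lambda,K)>0$. By the spectral theorem applied to each $J_n$, $\|(J_n-\lambda)u\|\ge\delta\|u\|$ for all $u\in\ell^2(\Z)$. Passing to the limit in $n$ gives $\|(J-\lambda)u\|\ge\delta\|u\|$, so $J-\lambda$ is bounded below; being self-adjoint, it is therefore invertible and $\lambda\notin\sigma(J)$. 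Thus $\sigma(J)\subset K$, which completes the proof.

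I do not expect any real obstacle here. The substantive input---that the reflectionless condition is preserved under $d$-convergence---is entirely packaged into the earlier Proposition \ref{P3.1}(a), and the supplementary spectral-inclusion step is the standard observation that confinement of the spectrum to a closed set passes to strong operator limits of uniformly bounded self-adjoint operators.
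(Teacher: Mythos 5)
Your proof is correct and follows essentially the same route as the paper: reduce to showing $\RR_0(K)$ is a closed subset of the compact space $\J_R$, using Proposition \ref{P3.1}(a) for the reflectionless condition and the strong-operator-limit stability of spectral confinement for $\sigma(J)\subset K$ (the paper handles the latter by citing \cite[Theorem VIII.24(a)]{RS1}, whereas you give the direct operator-inequality argument, but the content is the same).
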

As usual, this statement refers to the topology that is induced by the metric $d$.
Proposition \ref{P1.1} is not a new result; compare, for example, \cite{Kot, Remac}.
However, the Proposition will also be a consequence of our
Proposition \ref{P3.1} below, so we will briefly discuss its proof here.

It is also clear that $\RR_0(K)$ is shift invariant, and, ideally, one would like to
understand the dynamical system $(\RR_0(K),S)$. This task has been accomplished for
so-called homogeneous sets $K$ by Sodin-Yuditskii \cite{SodYud}; the special case of
a finite gap set $K$ is classical and has been studied in very great detail.
Beyond the Sodin-Yuditskii result, very little is known at present.

In this paper, we set ourselves the more modest task of analyzing the $\RR_0(K)$
for compact $K\subset\R$ as a collection of \textit{topological spaces }rather than
\textit{dynamical systems, }but we would like to
do this for rather general sets $K$. In this endeavor, the main difficulty comes from a
possible singular part of spectral measures and related measures on $K$.
If there can't be any such singular part, the analysis becomes much easier.
For example, the following statement holds.
\begin{Theorem}
\label{T1.3}
Suppose that $\rho_s(K)=0$ for all $\rho\in\H(K)$. Then $\RR_0(K)$ is homeomorphic
to the $N$-dimensional torus $\mathbb T^N$, where $N\in\N_0\cup\{\infty\}$
is the number of bounded components (``gaps'') of $K^c$.
\end{Theorem}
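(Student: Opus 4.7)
The plan is to build an explicit homeomorphism $\Phi:\RR_0(K)\to\T^N$ based on Dirichlet-type data in the gaps of $K$. I would enumerate the bounded components of $K^c$ as $G_j=(\alpha_j,\beta_j)$, form a circle $\T_j$ from $\overline{G_j}\times\{+,-\}$ by identifying the two sheets at the endpoints $\alpha_j$ and $\beta_j$, and give $\T^N=\prod_j \T_j$ the product topology. For finite $N$ this is the usual torus, and in general it remains a compact Hausdorff space, which is the correct target.

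To define $\Phi$, I would use the half-line $m$-functions $m_\pm$ at the origin. The reflectionless condition gives $m_+(t+i0)=-\overline{m_-(t+i0)}$ for a.e.\ $t\in K$, while on each gap $G_j$ the functions $m_\pm$ are real-meromorphic with at most one singularity. A standard monotonicity argument then produces a unique $\mu_j\in\overline{G_j}$ at which one of $m_\pm$ has a pole and the other a zero; recording which one via a sign $\epsilon_j\in\{+,-\}$ yields a point $(\mu_j,\epsilon_j)\in\T_j$, and I would set $\Phi(J)=(\mu_j,\epsilon_j)_j$.

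For bijectivity, injectivity would come from recovering $m_\pm$ from $\Phi(J)$ together with the a.c.\ boundary densities $\textrm{\rm Im }m_\pm(t+i0)$ on $K$ via the Herglotz representation, and then recovering $J$ itself by coefficient stripping. The hypothesis $\rho_s(K)=0$ for all $\rho\in\H(K)$ enters precisely here: it forces every relevant spectral measure to have no singular part on $K$, so the a.c.\ density on $K$ is pinned down by $K$ alone through the reflectionless identity. For surjectivity, given any target point of $\T^N$, I would construct $m_\pm$ via the Herglotz formula with the canonical a.c.\ density on $K$ and the prescribed pole/zero structure in the gaps, and verify that the resulting Jacobi coefficients are bounded and produce an element of $\RR_0(K)$.

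Topologically, continuity of $\Phi$ is comparatively cheap: $d$-convergence yields strong resolvent convergence and hence locally uniform convergence of $m_\pm$ on $\C\setminus K$, which forces continuous dependence of the pole/zero locations in each $G_j$. Since $\RR_0(K)$ is compact by Proposition~\ref{P1.1} and $\T^N$ is Hausdorff, a continuous bijection is automatically a homeomorphism, so the real content lies in the bijection. I expect surjectivity, particularly when $N=\infty$, to be the main obstacle: one must show that an arbitrary array of data is genuinely realized by a reflectionless operator with spectrum inside $K$, not in some proper subset. A natural route is to approximate $K$ from outside by finite-gap sets $P_n$ with $|P_n\Delta K|\to 0$, invoke classical finite-gap theory to realize finitely many Dirichlet data on each $P_n$, and then use compactness to extract a $d$-limit; Theorem~\ref{T1.1} in the converse direction suggests this limit will again be reflectionless on $K$, and the hypothesis $\rho_s(K)=0$ is exactly what prevents any stray singular mass on $K$ from spoiling the spectral identification.
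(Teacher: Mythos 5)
Your plan is, in essence, the paper's own proof: parametrize $J\in\RR_0(K)$ by a point $(\mu_j,\pm)$ on the circle obtained by gluing two copies of $\overline{G_j}$ at its endpoints, pass to the product over gaps, and upgrade a continuous bijection to a homeomorphism by compactness of $\RR_0(K)$. The paper routes the bookkeeping through the Krein function $\xi$ of $H=-1/g$ and the split $\rho=\nu_++\nu_-$ (Propositions~\ref{P2.2}--\ref{P2.4}) rather than directly through $m_\pm$, but this is the same parametrization; and with the hypothesis $\rho_s(K)=0$ the $g$-coordinate in Proposition~\ref{P2.4} drops out, so that proposition alone delivers both injectivity and surjectivity, with no separate realization argument needed.

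Two cautionary remarks on the points where you deviate. First, your fallback route to surjectivity via finite-gap approximants $P_n$ has a gap: extracting a $d$-limit of $J_n\in\RR_0(P_n)$ does land in $\RR_0(K)$ by Proposition~\ref{P3.1}, but nothing in that extraction controls the Dirichlet data of the limit, so it need not realize the prescribed point of $\T^N$ (point masses can migrate to gap endpoints, or into $K$ as singular mass, along the sequence). Your first-listed direct Herglotz construction of $\nu_\pm$ with the prescribed pole/sign structure is the route that actually works and is what the paper relies on. Second, you locate the main obstacle in surjectivity for $N=\infty$, whereas the paper's remark after the proof identifies the delicate step as continuity of the \emph{forward} map $J\mapsto(\mu_j,\sigma_j)$: weak-$*$ convergence of $\rho_n$ alone does not prevent mass on $K$ from being ``sucked into'' the gaps and flipping a $\sigma_j$, and the paper argues via the convergence of the point-mass weights $w_j^{(n)}\to w_j$ for interior $\mu_j$. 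Your $m$-function phrasing can handle this cleanly, but one should spell out why locally uniform convergence on $\C^+$ extends across the gaps and tracks both the location and the $\pm$-type of each pole (Hurwitz-type argument), since that is precisely the content that makes the forward map continuous.
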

Here, $\H(K)$ denotes the collection of measures
that come from the so-called $H$ functions of the $J\in\RR_0(K)$. This $H$ function is defined
as the negative reciprocal of the Green function at $n=0$: $H(z)=-1/g_0(z)$.
$H$ is a Herglotz function and thus there is a unique associated measure.
Please see Section 2 for a more detailed discussion of these definitions.

The hypothesis of Theorem \ref{T1.3} will be satisfied
by many sets $K$. For example, Theorem \ref{T1.3} applies to all
weakly homogeneous sets $K$ \cite[Corollary 2.3]{PolRem}; see also \cite{SodYud}.

If $N=\infty$ in Theorem \ref{T1.3}, then $\mathbb T^{\infty}=S^{\N}$ is defined as the infinite
Cartesian product of countably many copies of the circle $S$,
and we use product topology on this space. If $N=0$ (so $K$ is an interval), then we just define
$\mathbb T^0$ to be a single point. This of course is a well known special case: if $K$
is a compact interval, then the only Jacobi matrix that is reflectionless on $K$ and has
$K$ as its spectrum is the one with the appropriate constant coefficients.

While we haven't seen it in print in this form, we don't want to claim much credit for
Theorem \ref{T1.3}. There is a natural way of setting up a bijection between $\RR_0(K)$ and
$\mathbb T^N$, which has been used
by many authors \cite{Craig,Kot,SodYud,Teschl}, and what we add here
is the observation that this map is continuous with respect to the chosen topologies.

Moving on to the more original (we hope) parts of our discussion of the spaces $\RR_0(K)$,
we would now like to
understand how $\RR_0(K)$ changes with $K$. The spaces $\RR_0(K)$ are compact subsets of
$\J$, so it seems natural to use Hausdorff distance to compare their coarse structure,
what they look like if viewed from a distance. Recall that the Hausdorff distance between two
compact, non-empty subsets $A,B$ of a metric space is defined as
\[
h(A,B) = \max \{ \sup_{x\in A} d(x,B), \sup_{y\in B} d(y,A) \} .
\]
We would now like to know what conditions need to be imposed on $K,K'$ if we want
$\RR_0(K)$, $\RR_0(K')$ to be close in Hausdorff distance. For instance, is it always
possible to approximate a potentially complicated space $\RR_0(K)$ by a simpler space,
say $\RR_0(K')$ for a finite gap set $K'$?

More formally, we can observe that $K\mapsto \RR_0(K)$ is an injective map that is defined
on non-empty, compact sets $K\subset\R$, so we can pull back the Hausdorff metric and
ask for a description of the metric on the space
$\{ K\subset\R: K\textrm{ compact, }K\not=\emptyset\}$ that is obtained in this way.
More precisely, we would like to be able to write down an equivalent metric (one that
generates the same topology).

We will not be able to completely answer this question in this paper, but we can
report on some progress. Based on what we do below, it in fact seems reasonable to
conjecture that a possible choice for the sought metric is
\begin{equation}
\label{defdelta}
\delta(K,K') = h(K,K') + |K\Delta K'| .
\end{equation}
It is easy to see that this indeed defines a metric $\delta$ on
the non-empty compact subsets of $\R$; we will
discuss this fact in Proposition \ref{P5.1} below.

Notice that $\delta$ generates a stronger topology on $\{ K \}$
than its first summand Hausdorff distance $h$.
It is actually clear that $\{ K\subset [-R,R] \}$ with the correct metric,
whatever it may be, is not a compact space, and this immediately rules out Hausdorff distance
as the answer to our question.
For example, one can show that
if $K_n =\bigcup_{j=1}^n [j/n,j/n+1/n^2]$, say, then with respect to Hausdorff distance,
\[
\lim_{n\to\infty} \RR_0(K_n) = \{ J: \sigma(J)\subset [0,1] \} ,
\]
which is not equal to $\RR_0(K)$ for any $K$.

Here's one possible formulation of what we will actually prove.
\begin{Theorem}
\label{T1.4}
Let $K, K_n\subset[-R,R]$ be non-empty, compact sets,
and abbreviate $\Omega=\RR_0(K), \Omega_n=\RR_0(K_n)$.\\
(a) If $h(\Omega_n,\Omega)\to 0$, then $\delta(K_n,K)\to 0$.\\
(b) Suppose that $\rho_{sc}(K)=0$ for all $\rho\in\H(K)$. Then
$\delta(K_n,K)\to 0$ implies that $h(\Omega_n,\Omega)\to 0$.
\end{Theorem}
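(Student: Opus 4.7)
The plan for both parts relies on the continuity (established earlier in the paper) of $J \mapsto g_0^{(J)}$ from $(\J_R,d)$ to Herglotz functions (pointwise on $\C^+$), together with the boundary characterization of $K$: for $J\in\RR_0(K)$, $\mathrm{Re}\,g_0^{(J)}(t+i0)=0$ a.e.\ on $K$ while $\mathrm{Im}\,g_0^{(J)}(t+i0)>0$ for a.e.\ $t\in K$.

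For part (a), pick any $J\in\Omega$ and, using $h(\Omega_n,\Omega)\to 0$, choose $J_n\in\Omega_n$ with $d(J_n,J)\to 0$. Then $g_0^{(J_n)}\to g_0^{(J)}$ pointwise on $\C^+$, and standard Herglotz convergence theory yields $L^1_{\mathrm{loc}}$ convergence of the a.c.\ boundary densities $\mathrm{Im}\,g_0^{(J_n)}(t+i0)\to\mathrm{Im}\,g_0^{(J)}(t+i0)$; since these are positive essentially precisely on $K_n$ and $K$, this forces $|K_n\Delta K|\to 0$. For $h(K_n,K)\to 0$, one inclusion follows from lower semi-continuity of the spectrum under $d$-convergence in $\J_R$: for $J\in\Omega$ with $\sigma(J)$ filling $K$ and $J_n\in\Omega_n$ with $J_n\to J$, any $\lambda\in K=\sigma(J)$ is a limit of $\lambda_n\in\sigma(J_n)\subseteq K_n$. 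The reverse inclusion $K_n\subseteq B_\epsilon(K)$ is less automatic; here one must use the rigidity afforded by the reflectionless structure together with a compactness/contradiction argument using the fact that limits in $\J_R$ of sequences $J_n\in\Omega_n$ lie in $\Omega$ (by $h(\Omega_n,\Omega)\to 0$), combined with the measure statement $|K_n\Delta K|\to 0$ just established.

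Part (b) splits into compactness and density. \emph{Compactness:} if $J_n\in\Omega_n$ and $J_{n_k}\to J^*$ in $d$ along a subsequence, then $\sigma(J^*)\subseteq K$ by lower semi-continuity of the spectrum together with $h(K_n,K)\to 0$ (each $\lambda\in\sigma(J^*)$ is a limit of $\lambda_n\in\sigma(J_n)\subseteq K_n$, and the $K_n$ accumulate inside $K$), while $J^*\in\RR(K)$ comes from pointwise convergence of $g_0^{(J_{n_k})}$ and $|K_n\Delta K|\to 0$: after a further subsequence with a.e.\ boundary convergence, $\mathrm{Re}\,g_0^{(J_{n_k})}(t+i0)=0$ a.e.\ on $K_{n_k}$ passes to $\mathrm{Re}\,g_0^{(J^*)}(t+i0)=0$ a.e.\ on $K$.

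\emph{Density}, which is the main difficulty, requires, for each $J\in\Omega$, the construction of $J_n\in\Omega_n$ with $d(J_n,J)\to 0$. The hypothesis $\rho_{sc}(K)=0$ forces every element of $\RR_0(K)$ to have spectral measure of the form $\rho_{ac}+\rho_{pp}$, and (extending the parameterization behind Theorem \ref{T1.3} to accommodate gap eigenvalues) $J$ is determined by a circle-valued parameter on each gap of $K$ plus discrete pure-point data. The plan is to transfer these parameters to the combinatorially corresponding gaps of $K_n$ (which match up for large $n$ by $h(K_n,K)\to 0$) and to invoke continuity of the inverse parameterization to obtain $d(J_n,J)\to 0$. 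The principal obstacle is establishing this continuity uniformly in $n$: the no-singular-continuous hypothesis is exactly what rules out the discontinuous shifts of s.c.\ mass that would otherwise arise under perturbations of $K$, but one still needs care in matching gaps (a single gap of $K$ may arise as a limit of several gaps of $K_n$, or vice versa) and in handling pure-point masses near gap boundaries.
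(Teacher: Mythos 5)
Your proposal has two genuine gaps, one in each part.

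\textbf{Part (a).} The step ``standard Herglotz convergence theory yields $L^1_{\mathrm{loc}}$ convergence of the a.c.\ boundary densities'' is false. Locally uniform convergence of Herglotz functions on $\C^+$ only gives weak-$*$ convergence of the associated measures, and under weak-$*$ convergence mass can migrate between the absolutely continuous and singular parts. (Standard example: a sequence of pure point measures converging weak-$*$ to Lebesgue measure on $[0,1]$; the a.c.\ densities are identically zero along the sequence.) The reflectionless constraint does force extra structure, and the conclusion $|K_n\Delta K|\to 0$ is correct, but the paper obtains it by a substantially more delicate route: it works entirely on the level of Krein functions $\xi\in X(K)$, shows $h(K_n,K)\to 0$ first by exhibiting, for each putatively missing interval, specific $\xi$'s that could not arise as weak-$*$ limits of $\xi$'s in the other collection (this also proves the reverse Hausdorff inclusion you leave as a vague compactness/contradiction sketch), and only then derives $|K_n\cap I|\to |K\cap I|$ by testing $\xi_n\,dt$ against continuous approximations of $\chi_I$. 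You cannot shortcut to $L^1_{\mathrm{loc}}$ control of densities.

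\textbf{Part (b), density.} You correctly identify the need to transfer data gap-by-gap, and your compactness half matches the paper. But the hypothesis $\rho_{sc}(K)=0$ does \emph{not} imply $\rho_s(K)=0$: $\rho$ may carry pure point mass \emph{on $K$}, not only at the gap parameters $\mu_j$. Such point masses have no combinatorial counterpart in $K_n$ (whose spectrum near that point is typically purely a.c.\ for a good choice of $\xi_n$), so a parameter transfer alone cannot produce the approximants. The missing ingredient is precisely the paper's Theorem \ref{T4.1}: any $J\in\RR_0(K)$ with $\rho_{sc}(K)=0$ can be approximated within $\RR_0(K)$ by operators with $\rho_s(K)=0$, by carefully deforming $\xi$ in small neighborhoods of the point masses on $K$ and splitting them into gap point masses. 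Only after this reduction does the gap-matching argument (the paper's Theorem \ref{T4.6}) close the proof. Your sketch mentions ``pure-point masses near gap boundaries'' but not the point masses in the interior of $K$, which is where the real work lies and where $\rho_{sc}(K)=0$ is actually used.
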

This is an incomplete answer to the question we posed because of the additional assumption
in part (b) (which is similar to, but considerably weaker than the assumption we made in
Theorem \ref{T1.3} above).
We can be somewhat more specific here: Notice, first of all, that for two compact
sets to be close in Hausdorff distance, we have to be able to come close to each point from one
set by an element of the other set, and vice versa. However, as we'll explain below, at the
beginning of Section 5, the only real issue here is the question of whether we will be able
to approximate any $J\in\Omega$ by $J_n\in\Omega_n$. Indeed, in Theorem \ref{T1.5} below,
the additional assumption is required only in part (b), when we try to approximate
$J\in\Omega$.
\begin{Theorem}
\label{T1.5}
Let $K_n,K\subset[-R,R]$ be non-empty compact sets, and
suppose that $\delta(K_n,K)\to 0$. Then:\\
(a)
\[
\lim_{n\to\infty}\sup_{J\in\Omega_n} d(J,\Omega) = 0
\]
(b) If $J\in\Omega$ and the associated measure satisfies $\rho_{sc}(K)=0$, then
\[
\lim_{n\to\infty} d(J,\Omega_n) = 0 .
\]
\end{Theorem}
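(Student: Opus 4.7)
Argue by contradiction and compactness. If the statement fails, there are $\varepsilon>0$ and $J_k\in\Omega_{n_k}$ with $d(J_k,\Omega)\ge\varepsilon$; by compactness of $\J_R$, pass to a subsequence $J_k\to J^*$ in $d$, and show $J^*\in\RR_0(K)$. Upper semicontinuity of the spectrum under strong operator convergence, combined with $h(K_{n_k},K)\to 0$, gives $\sigma(J^*)\subset K$. For the reflectionless property, the Green functions $g_n^{(k)}$ converge to $g_n^*$ normally on $\C^+$, and $\Re g_n^{(k)}=0$ a.e.\ on $K_{n_k}$. Since $|K_{n_k}\Delta K|\to 0$, the standard Herglotz boundary-value machinery (in the spirit of Proposition \ref{P3.1}) transports the vanishing of $\Re g_n^{(k)}$ to $K$ in the limit, producing $J^*\in\RR_0(K)$.

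\textbf{Plan for (b).} This is the substantive direction. I would invoke the natural bijection between $\RR_0(K)$ and its Dirichlet-type data on the gaps, realized through the $H$-function $H(z)=-1/g_0(z)$ and its Herglotz measure $\rho\in\H(K)$. On $K$, the absolutely continuous part of $\rho$ is the intrinsic reflectionless density determined by $K$; the singular part of $\rho$ comprises at most one atom in each bounded component of $K^c$, atoms on $K$, and a singular continuous piece on $K$, which vanishes by hypothesis. Given $\delta(K_n,K)\to 0$, build a candidate $\rho_n$ by (i) placing on $K_n$ the reflectionless density intrinsic to $K_n$; (ii) transporting each gap atom of $\rho$ into the corresponding gap of $K_n$ (for large $n$, every gap of $K$ is close to a gap of $K_n$ by $h(K_n,K)\to 0$, modulo a vanishing total length of new tiny gaps in $K_n$ carrying no transported mass); and (iii) preserving each atom of $\rho$ on $K$ that still lies in $K_n$, and otherwise assigning it to the adjacent gap of $K_n$. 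Let $J_n\in\RR_0(K_n)$ be the Jacobi matrix associated with $\rho_n$ under the inverse of the bijection.

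\textbf{Main obstacle.} The heart of the proof is showing $d(J_n,J)\to 0$. By continuity of the $H$-function parametrization on $\J_R$ (in the spirit of Theorem \ref{T1.3}), this reduces to vague convergence $\rho_n\to\rho$. The ac pieces converge because $|K_n\Delta K|\to 0$ and the reflectionless density depends continuously on the set off edge points; the transported gap atoms converge because gap endpoints converge by $h(K_n,K)\to 0$; atoms on $K$ are handled atom-by-atom. The role of $\rho_{sc}(K)=0$ is precisely to forbid a continuous singular measure on $K$ that has no canonical counterpart among these three components and whose redistribution under $K_n\to K$ could produce discontinuities as gaps of $K_n$ open or close along the boundary of $K$; without this hypothesis, such mass could persist as a positive-distance obstruction to approximation. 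Organizing the bookkeeping carefully, especially when $K$ has infinitely many gaps, is the technical core and the main obstacle I expect.
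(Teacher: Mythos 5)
Your plan matches the paper's proof: compactness of $\J_R$ gives a convergent subsequence, and Proposition~\ref{P3.1} (which combines upper semicontinuity of the spectrum under strong-operator convergence with a compactness argument for the reflectionless condition) then shows the limit lies in $\Omega$, yielding a contradiction. This part is fine.

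\textbf{On part (b).} There is a structural error and a missing reduction, and together they leave the proposal a long way from a proof.

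The structural error is the claim that $\rho_n$ alone determines $J_n$ ``under the inverse of the bijection.'' The measure $\rho$ associated with $H=-1/g_0$ is determined by the Krein function $\xi$ and does \emph{not} determine $J$. To recover $J\in\RR_0(K)$ one must additionally specify the splitting $\rho=\nu_++\nu_-$, which is encoded by the $\sigma_j\in\{0,1\}$ (for the atoms at the $\mu_j$ in the gaps) and the density $g$ on the singular part on $K$ — this is exactly the data in~\eqref{2.2} and Proposition~\ref{P2.4}. Your ``Dirichlet-type data on the gaps'' is the $(\mu_j,\sigma_j)$ pair, which is genuinely more than $\rho$. Consequently your target — vague convergence $\rho_n\to\rho$ — is not the right convergence to establish. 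The required statement is weak-$*$ convergence $\nu_{n,+}\to\nu_+$, and the whole difficulty is the allocation of singular mass: a point mass in a gap of $K_n$ must be assigned \emph{entirely} to $\nu_{n,+}$ or to $\nu_{n,-}$, and one has to show this rigid discrete choice can approximate the (possibly fractional) allocation carried by $\nu_+$. The paper does this by a careful counting argument around the decay estimate $\sup_{j>N_n}w_j^{(n)}\to 0$ (formula~\eqref{4.37}), which is where the hypothesis $\rho_s(K)=0$ enters.

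The missing reduction is that part (b) assumes only $\rho_{sc}(K)=0$, so $\rho$ may still have atoms \emph{on} $K$. Those atoms cannot simply be ``preserved'' or ``assigned to the adjacent gap of $K_n$'' as you propose: an atom at $x\in K$ is forced by $\xi$ near $x$ (by the pointwise criterion of \cite[Lemma 2.4]{PolRem}) and is not a free parameter that travels with $K$; moreover such an atom can shed mass into $\rho_{n,ac}$ as gaps of $K_n$ near $x$ shift, which wrecks both the estimate $\rho_n\to\rho$ (at the level of $\nu_+$) and the claim that new tiny gaps of $K_n$ carry negligible mass. The paper's route is to first prove Theorem~\ref{T4.1}: approximate $J$ inside $\RR_0(K)$ by $J'$ with $\rho'_s(K)=0$, by surgically modifying $\xi$ near each offending atom and using the Splitting Lemma to shunt its weight into controllable atoms inside the gaps. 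Only then is Theorem~\ref{T4.6} (the $\rho_s(K)=0$ case) applied. Your proposal omits this step, and without it the ``transport'' bookkeeping breaks down precisely at the atoms on $K$ that the hypothesis still permits.
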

As we'll discuss in Section 5, Theorem \ref{T1.4}(b)
will be a quick consequence of Theorem \ref{T1.5}. The crucial ingredient
to the proof of Theorem \ref{T1.5}, in turn, will be the following approximation result,
which could be of some independent interest.
\begin{Theorem}
\label{T4.1}
Let $J\in\RR_0(K)$ and suppose that the associated $\rho\in\mathcal H(K)$
satisfies $\rho_{sc}(K)=0$. Then there are
$J_n\in\RR_0(K)$ with $\rho^{(n)}_s(K)=0$, so that $d(J_n,J)\to 0$.
\end{Theorem}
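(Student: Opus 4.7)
The plan is to approximate $J$ by deforming the point masses of $\rho|_K$ into absolutely continuous mass while preserving membership in $\RR_0(K)$. Under the hypothesis $\rho_{sc}(K)=0$, we may write $\rho|_K = \rho_{ac} + \sum_k c_k\delta_{\lambda_k}$ with $\lambda_k\in K$ and $c_k>0$. Each point mass corresponds to a pole of $H=-1/g_0$ at $\lambda_k$, equivalently a real zero of $g_0(\cdot+i0)$ there; since $\operatorname{Im} g_0(t+i0)>0$ for a.e.\ $t\in K$, such zeros can only sit at boundary points of $K$, i.e., at endpoints of one of its gaps. Morally, the $\lambda_k$ are ``Dirichlet data'' of $J$ that have drifted onto $K$ itself rather than sitting interior to the gaps.

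For each $n$, I would remove the first $n$ of these point masses, one at a time, by shifting the corresponding Dirichlet datum off $\lambda_k$ into the interior of the adjacent gap. Such a shift moves the point mass of $\rho$ from $K$ into the gap (so that it no longer contributes to $\rho_s(K)$) while continuously adjusting the absolutely continuous density on $K$. Concretely, one constructs a one-parameter family $J^\epsilon\in\RR_0(K)$ with $J^0=J$ that shifts the chosen datum by a distance $\epsilon$, so that $J^\epsilon\to J$ in $d$ as $\epsilon\to 0$. Combining these single-point-mass deformations along a diagonal sequence $\epsilon_n\to 0$ produces $J_n\in\RR_0(K)$ with $\rho^{(n)}_s(K)=0$ and $d(J_n,J)\to 0$; the compactness of $\RR_0(K)$ (Proposition \ref{P1.1}) places any limit automatically in $\RR_0(K)$.

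The main obstacle is the construction of the deformation itself. For finite-gap $K$, the torus parametrization of Theorem \ref{T1.3} together with the Toda-flow invariance from Corollary \ref{C1.1} provides such deformations directly: one can smoothly slide the Dirichlet data through the gaps while remaining in $\RR_0(K)$. For general $K$, however, the reflectionless condition ties values of $H$ on $K$ to values off $K$ through a nonlocal Hilbert-transform relation, so local modifications of $H$ do not automatically preserve reflectionlessness. Overcoming this will require the structural description of reflectionless $H$-functions via the Krein function developed in the earlier sections of the paper, together with a verification (via the continuity properties of $J\mapsto H$ and the compactness in Proposition \ref{P1.1}) that the modified Herglotz function does in fact arise as $-1/g_0$ for some Jacobi matrix lying in $\RR_0(K)$.
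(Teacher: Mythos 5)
Your central structural claim is false, and it is precisely the false part that hides the main difficulty of the theorem. You assert that a point mass of $\rho$ at $\lambda_k\in K$, being a zero of $g_0$, must sit at a gap endpoint of $K$. But $\operatorname{Im}g_0>0$ only Lebesgue-a.e.\ on $K$, and a single point is a null set, so this gives no constraint whatsoever on the location of zeros of $g_0$ inside $K$. (Also note that ``boundary point of $K$'' is not the same as ``gap endpoint'': for a set like a fat Cantor set, every point of $K$ is a boundary point but almost none is a gap endpoint.) In fact the paper explicitly treats, as the \emph{main} and hardest case of the proof, the situation where $0\in K$ carries a point mass of $\rho$ and gaps accumulate at $0$ from both sides, so that there is no ``adjacent gap'' to push the Dirichlet datum into. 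Your strategy of shifting the datum laterally into a neighboring gap simply has no target in that case, so the whole plan collapses there.

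Even setting that aside, the proposal is a programme rather than a proof: you concede that constructing the deformation ``will require the structural description of reflectionless $H$-functions via the Krein function'' and leave it at that. What the paper actually does is construct a new Krein function $\xi_d$ by replacing $\xi$ on a shrinking window $(-A,B)$ around the mass point with an explicit profile (jump at the center of each tiny gap inside $(-d,d)$, then constant $1$ and $0$ on the flanks), and then proves by direct Hilbert-transform estimates (Lemma~\ref{L4.2}) that the resulting $\rho_d$ has \emph{no} singular part on $K\cap(-A,B)$, that $\rho_{d,ac}((-A,B))$ is $O(\epsilon^{1/2})$, and that $\rho_d$ is uniformly comparable to $\rho$ away from the window. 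The point mass is not slid into one gap; it is dissolved into many small point masses spread over the gaps near $0$, with the small absolutely continuous leakage controlled quantitatively. None of this is present in your sketch.

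Finally, you never address the $\nu_+$ side of the spectral data. To produce an actual $J_n\in\RR_0(K)$ close to $J$ it is not enough to control $\rho_n\to\rho$; one must simultaneously choose the parameters $\sigma_j^{(n)}\in\{0,1\}$ (and the density $g$) so that the half-line measure $\nu_{n,+}$ also converges weak-$*$ to $\nu_+$. A substantial part of the paper's proof — the choice of index set $M$ making $\sum_{j\in M}w_j(d)$ approximate $\alpha\rho_d((-A,B))$ via an intermediate-value argument in $d$, and the clean-up step that drives $\rho_s(K)$ from $<\epsilon$ to exactly $0$ — is devoted to this, and it does not follow from the compactness of $\RR_0(K)$: compactness places a subsequential limit somewhere in $\RR_0(K)$, not at the specific $J$ you need. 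The Toda-flow/torus remark is a fine heuristic for finite-gap $K$ but does not transfer to general $K$, which is the case the theorem is about.

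Bottom line: the approach is not a variant of the paper's argument, and as written it contains a decisive error (point masses of $\rho$ on $K$ need not be gap endpoints) plus two unaddressed essentials (explicit construction and estimation of the deformed Krein function; control of the $\nu_+$ data). It would need to be rebuilt from scratch along the lines of Lemma~\ref{L4.2} and the $\sigma$-selection argument to become a proof.
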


We organize this paper as follows. Section 2 discusses basic material from
inverse spectral theory, but in a version that is tailor made for the investigation
of specifically reflectionless operators. We also prove Theorem \ref{T1.3} in this section.
Theorems \ref{T1.1} and \ref{T1.2} are proved in the Section 3.
In Section 4, we discuss Theorem \ref{T1.4}(a). Section 5 has the proofs
of Theorems \ref{T1.4}(b), \ref{T1.5}, and \ref{T4.1}.

In the main body of this paper, that is, in this introduction and Sections 2--5,
we assume that the sets $B$ and $K$ are of positive Lebesgue measure. This is really
the relevant case here since we want to understand aspects of the absolutely continuous spectrum.
However, from a formal point of view, our results remain correct if $|B|=0$ or $|K|=0$.
Note that if $|B|=0$, then $\RR(B)=\J$, and $\RR_0(K)=\{ J: \sigma(J)\subset K\}$ if $|K|=0$.
The arguments needed in these cases are easier than, but also different from those for
the positive measure case. Therefore, we very briefly discuss them separately, in the final section.
In Sections 1--5, we always assume that $|B|>0$, $|K|>0$.

In fact, we can strengthen Theorems \ref{T1.4}, \ref{T1.5}, and \ref{T4.1} if $|K|=0$,
and we do obtain complete answers in this case. For example, if $|K|=0$, then
$h(\Omega_n,\Omega)\to 0$ if and only $\delta (K_n,K)\to 0$, which, in this case,
happens if and only if $h(K_n,K)\to 0$.
\section{Spectral data for reflectionless Jacobi matrices}
In this section, we review and develop further basic tools that will allow
us to conveniently describe reflectionless Jacobi matrices in terms of carefully
chosen spectral data. This material will be fundamental for everything we do in
this paper. See \cite{Craig,Kot,Remac,Teschl} for earlier applications of this
basic method, and also \cite{GesSimxi} for related uses of the $\xi$ function.
Our presentation here follows \cite[Sections 5, 6]{Remac} very closely,
with some additional material added. We only sketch most of the proofs here and refer
the reader to this reference for full details.

This discussion will come in three parts. In the first part, we introduce convenient (for our
purposes) spectral data for arbitrary bounded Jacobi matrices. We then modify these data
to obtain a related param\-e\-tri\-zation of $\RR(B)$, and, in the final part, we introduce still
another variant of this, which will be particularly useful when we discuss the spaces
$\RR_0(K)$.

Let $J\in\mathcal J$, and consider the Herglotz function
\[
H(z) = -\frac{1}{g(z)} ,
\]
where $g(z)=\langle \delta_0, (J-z)^{-1} \delta_0 \rangle$. Let $\xi$ be the Krein function
of $H$, that is,
\[
\xi(t) = \frac{1}{\pi}\lim_{y\to 0+} \textrm{Im }\ln H(t+iy) .
\]
Since $H(z)$ is never zero on the upper half plane $\C^+$, we can take a holomorphic
logarithm $\ln H(z)$. In fact, $H(z)$ is a Herglotz function, so we can demand that
the imaginary part of this logarithm lies in $(0,\pi)$. The limit defining $\xi$ exists
for (Lebesgue) almost every $t\in\R$; we view $\xi$ as an element of $L^{\infty}(\R)$,
and in fact we have that $0\le\xi(t)\le 1$.
The Krein function is an exceedingly useful tool here, mainly because of the following fact:
\begin{Proposition}
\label{P2.1}
If $J\in\RR(B)$, then $\xi =1/2$ almost everywhere on $B$.
\end{Proposition}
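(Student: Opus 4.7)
The plan is to compute the boundary value $H(t)$ directly for almost every $t \in B$, using the reflectionless hypothesis, and then read off $\xi(t)$ from the argument of $H(t)$. Since $g=g_0$ is the Borel transform of the (positive) spectral measure of $\delta_0$, it is itself a Herglotz function mapping $\C^+$ into $\C^+$, and hence $H=-1/g$ is also Herglotz. In particular, both $g$ and $H$ admit finite nontangential boundary values for almost every $t\in\R$.

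The hypothesis $J\in\RR(B)$ gives $\operatorname{Re} g(t)=0$ for a.e.\ $t\in B$. If I can show in addition that $\operatorname{Im} g(t)>0$ a.e.\ on $B$, then the boundary value $g(t)$ lies on the open positive imaginary axis, so
\[
H(t) \;=\; -\frac{1}{g(t)} \;=\; \frac{i}{\operatorname{Im} g(t)}
\]
lies on the open positive imaginary axis as well. Consequently $\operatorname{Im}\ln H(t)=\arg H(t)=\pi/2$ (with the argument normalized to $(0,\pi)$, as specified), so $\xi(t)=(1/\pi)(\pi/2)=1/2$, as claimed.

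The only real point requiring justification, and the step I expect to be the main (though rather mild) obstacle, is the strict positivity of $\operatorname{Im} g(t)$ almost everywhere on $B$; this is needed simply to ensure the displayed calculation makes sense and does not degenerate into a ``$\arg 0$ or $\arg\infty$'' situation on a set of positive measure. To handle it, I would exploit the identity $g(z)H(z)\equiv -1$ on $\C^+$: since both factors possess finite nontangential boundary values almost everywhere on $\R$, this product identity persists at a.e.\ boundary point, forcing $g(t)\neq 0$ a.e. On $B$, where $\operatorname{Re} g(t)=0$ already, this rules out $\operatorname{Im} g(t)=0$; combined with the standard fact that a Herglotz function has nontangential boundary values with non-negative imaginary part almost everywhere, this yields $\operatorname{Im} g(t)>0$ a.e.\ on $B$. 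Substituting this into the formula for $H(t)$ above completes the argument.
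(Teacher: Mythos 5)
Your proof is correct and follows the same line as the paper's: $\operatorname{Re} g = 0$ a.e.\ on $B$ makes $g$, and hence $H = -1/g$, purely imaginary there, so $\arg H = \pi/2$ and $\xi = 1/2$. The extra care you take to justify $\operatorname{Im} g(t) > 0$ a.e.\ on $B$ (via finiteness of nontangential boundary values and the identity $gH \equiv -1$) is a detail the paper dismisses as ``obvious,'' but it is a valid and welcome bit of rigor.
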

\begin{proof}
This is obvious because if $J$ is reflectionless on $B$, then, by definition,
$g$ and thus also $H$ are purely imaginary on this set.
\end{proof}

We can recover $H$ from its Krein function $\xi$; in fact, we have the explicit formula
\begin{equation}
\label{2.5}
H(z) = (z+R)\exp\left( \int_{-R}^R \frac{\xi(t)\, dt}{t-z} \right) .
\end{equation}
Here, $R$ is chosen so large that $\|J\|\le R$; we have also made use of the asymptotic formula
$H(z)=z+O(1)$ as $|z|\to \infty$ to determine an otherwise unknown constant.

However, we cannot, in general, recover the Jacobi matrix $J$ from $H$ or, equivalently,
$\xi$. We must introduce additional spectral data. We proceed as follows.
Write down the Herglotz representation for $H$:
\begin{equation}
\label{2.6}
H(z) = z+A + \int_{(-R,R)} \frac{d\rho(t)}{t-z}
\end{equation}
Here, $\rho$ is a finite and (obviously) compactly supported Borel measure.
The constant $A$ can be identified in
terms of the previously used data as
\[
A = R - \int_{-R}^R \xi(t)\, dt .
\]
Furthermore, we can use the half line spectral measures $\rho_{\pm}$
to decompose $\rho$ as follows:
\[
\rho = a(0)^2 \rho_+ + a(-1)^2\rho_-
\]
Please see \cite[Chapter 2]{Teschl} for this information and precise definitions
of $\rho_{\pm}$. (Warning: These measures are called $\widetilde{\rho}_{\pm}$ in
\cite{Teschl}.)

It will be convenient to introduce $\nu_+=a(0)^2\rho_+$,
$\nu_-=a(-1)^2\rho_-$ and rewrite this as
\begin{equation}
\label{2.12}
\rho = \nu_+ + \nu_- .
\end{equation}
We will also refer to these measures $\nu_{\pm}$ as half line spectral measures. Note that
$\nu_+$ or $\nu_-$ or both of these can be the zero measure.

Every pair of positive finite measures $(\rho_+,\rho_-)$ with $\rho_{\pm}(\R)=1$
is admissible here as a pair of half line spectral measures; we thus obtain the
following (almost) parametrization of (bounded) Jacobi matrices:
With each $J\in\J$, associate its $\xi$ function
and also its positive half line spectral measure $\nu_+$. As just discussed,
these data have the property that if $\rho$ is defined via \eqref{2.5},
\eqref{2.6}, and then $\nu_-$ via \eqref{2.12},
then $\nu_-$ is a positive (finite) measure (possibly the zero measure).

Conversely, suppose that such a pair $(\xi,\nu_+)$ is given.
In other words, if again $\rho$ is the measure associated with
$\xi$ and \eqref{2.12} is used to define $\nu_-$, then $\nu_-$ is a positive measure.
Then there exists a Jacobi matrix $J$ that has $(\xi,\nu_+)$ as its spectral data;
moreover, $J$ will be unique if both $\nu_+$ and $\nu_-$ have infinite supports
(in particular, neither is the zero measure).
This follows from the usual inverse spectral theory for half line problems:
If $\nu_{\pm}(\R)>0$, then we can of course recover
$\rho_{\pm}$ (and $a(0)$, $a(-1)$) from $\nu_{\pm}$, because these measures
are probability measures. Then the measures $\rho_{\pm}$ determine all the
remaining coefficients except for $b(0)$; here, we use the assumption that
$\rho_{\pm}$ are not finitely supported.
Moreover, $b(0)=-A$, with $A$ from \eqref{2.6}, which is determined by $\xi$.
See \cite[Chapter 2]{Teschl} and \cite[Section 5]{Remac} for more background information
on this procedure.

If $\nu_+$ or $\nu_-$ is finitely supported, then $a(n)=0$ for some $n$ and only the
coefficients up to this index are determined by $(\xi,\nu_+)$.
For instance, to give an extreme example,
if $\xi=\chi_{(-R,b)}$, then $H(z)=z-b$ and $\rho=\nu_+=\nu_-=0$, and we can only conclude
that $a(-1)=a(0)=0$, $b(0)=b$.

We are interested in approximation properties in this paper, so it is essential to
use parametrizations with good continuity properties. Indeed, we have:
\begin{Proposition}
\label{P2.5}
Fix $R>0$.
The map that sends $J\in\J_R$
to $(\xi,\nu_+)$ is a continuous map between compact metric spaces.
Here, we use the weak-$*$ topology for both $\xi\, dt$ and $d\nu_+$ on the image.
\end{Proposition}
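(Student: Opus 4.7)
The plan is to upgrade $J_n\to J$ in $\J_R$ (product topology on coefficients, equivalently strong operator topology) to pointwise convergence of the Herglotz functions on $\C^+$, and then to invoke the standard fact that pointwise convergence of Herglotz functions yields weak-$*$ convergence of the associated representing measures. Strong operator convergence gives $(J_n-z)^{-1}\delta_0\to (J-z)^{-1}\delta_0$ in $\ell^2(\Z)$ for each $z\in\C^+$, hence $g_n(z)\to g(z)$, and therefore $H_n(z) = -1/g_n(z)\to H(z)$. Choosing the branch with $\textrm{Im}\,\ln H\in(0,\pi)$, we likewise have $\ln H_n\to \ln H$ pointwise on $\C^+$.

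For the $\xi$ component, $\ln H$ is a Herglotz function with imaginary part in $(0,\pi)$, so by the very definition of the Krein function its Herglotz representing measure is absolutely continuous with density $\xi(t)$, with no singular part and no linear term. The standard principle that pointwise convergence of Herglotz functions on $\C^+$ implies weak convergence of their (Poisson-regularized) representing measures then gives $\xi_n(t)\,dt/(1+t^2) \to \xi(t)\,dt/(1+t^2)$ weakly as finite positive measures on $\R$. Since $J_n,J\in\J_R$, all $\xi_n$ and $\xi$ agree outside $[-R,R]$ (equal to $1$ on $(-\infty,-R)$ and $0$ on $(R,\infty)$), so the only variation is on $[-R,R]$, where $1/(1+t^2)$ is bounded above and away from zero; hence $\xi_n\,dt \to \xi\,dt$ in the weak-$*$ topology on Radon measures. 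For the $\nu_+$ component, product-topology convergence of the coefficients also yields strong operator convergence of the half-line restriction $J_+^{(n)}\to J_+$ on $\ell^2(\N_0)$, so applying the Herglotz criterion to the half-line Weyl $m$-function gives weak-$*$ convergence $\rho_+^{(n)}\to \rho_+$ of the half-line spectral measures; together with the convergence of the coefficient $a(0)$ for $J_n$, this yields $\nu_+^{(n)} \to \nu_+$ weakly-$*$.

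Compactness of both spaces is standard: $\J_R$ is compact as already noted in the introduction, while the image, consisting of pairs $(\xi,\nu_+)$ with $0\le\xi\le 1$ (prescribed outside $[-R,R]$) and $\nu_+$ a positive Borel measure on $[-R,R]$ of total mass at most $R^2$, is compact in the product of weak-$*$ topologies by Banach--Alaoglu and Tychonoff. The main obstacle, I expect, is the Herglotz-to-measure step, which is a standard but not entirely trivial fact one would ordinarily cite from a reference such as \cite{Remac}. Minor technical care is also needed with the logarithm branch (handled by the $(0,\pi)$ normalization) and with the fact that $\xi\,dt$ is not a finite measure on all of $\R$ (handled by confining attention to $[-R,R]$, where all the variation lives).
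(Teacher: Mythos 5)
Your proof is correct and follows essentially the same route as the paper's own sketch: upgrade coefficient convergence to pointwise (locally uniform) convergence of $H_n\to H$ on $\C^+$, pass to logarithms to handle $\xi$ via Herglotz-to-measure continuity, and invoke the standard continuity of half-line spectral measures for $\nu_+$; your treatment of the Poisson regularization and of where the variation of $\xi$ actually lives is merely a more explicit version of the step the paper disposes of by citing \cite[Theorem 2.1]{Remac}.
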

More precisely, we make the image a (compact)
metric space in the following way. Fix a metric $D$ that induces
the weak-$*$ topology on the Borel measures $\mu$ on $[-R,R]$ with $\|\mu\|\le C$. Then let
\begin{equation}
\label{2.24}
d((\xi,\nu_+),(\xi',\nu'_+))=D(\xi\, dt, \xi'\, dt)+D(\nu_+, \nu'_+) .
\end{equation}
\begin{proof}[Sketch of proof]
The spectral measure $\nu_+$ depends continuously on $J$
with respect to the chosen topologies; this is a well known basic fact.
See, for example, \cite[Lemma 3.2]{Remac}. Moreover,
if $J_n\to J$, then $H_n(z)\to H(z)$ locally uniformly on $z\in\C^+$. But then
$\ln H_n(z)$ also converges, to $\ln H(z)$, and these new Herglotz functions have associated
measures $\xi_n\, dt$ and $\xi\, dt$, respectively, so
we now obtain the asserted convergence $\xi_n\, dt\to\xi\, dt$ in weak-$*$ sense.
Compare \cite[Theorem 2.1]{Remac} for this last step.
\end{proof}

If $J\in\RR(B)$ for some positive measure Borel set $B\subset\R$, then, as we saw above,
$\xi=1/2$ on $B$, and, moreover, $\nu_+$ also has to satisfy a related condition. We have that
\begin{equation}
\label{2.14}
d\nu_+(t) = f(t)\, d\rho(t) ,
\end{equation}
where $f$ is a Borel function that satisfies $0\le f\le 1$. Of course, so far this is just
a rephrasing of what \eqref{2.12} says about $\nu_+$. However, $J\in\RR(B)$ also implies
that $f(t)=1/2$ for Lebesgue almost every
$t\in B$. See \cite[Corollary 5.3]{Remac}. These data $(\xi, f)$ provide a complete
parametrization of $\RR(B)$. Here, $f$ is thought
of as an element of $L^1(\R, d\rho)$, where $\rho$ is the measure associated with $\xi$.

So, if $(\xi,f)$ as above is given (that is, $\xi=f=1/2$ almost everywhere on $B$ and
$0\le\xi, f\le 1$), then there is a unique $J\in\RR(B)$ whose spectral data are $(\xi,f)$.
This time, there are no strings attached: We know that $\chi_B\, dt$ is absolutely continuous
with respect to $d\rho$, and, since $f=1/2$ Lebesgue almost everywhere on $B$, also with respect
to $d\nu_{\pm}$, so these measures have infinite supports.
See again \cite[Corollary 5.3]{Remac} for the details.
The correspondence $J\leftrightarrow (\xi, f)$ is continuous in both directions:
\begin{Proposition}
\label{P2.2}
Fix $R>0$ and a Borel set $B\subset (-R,R)$ with $|B|>0$.
The map that sends $J\in\RR(B)\cap\J_R$ to $(\xi,f)$ is a homeomorphism onto its image.
On this image, we use the weak-$*$ topology for the measures $d\nu_+=f\, d\rho$; both
spaces are compact metric spaces.
\end{Proposition}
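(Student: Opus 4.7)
My plan is to apply the general topological principle that a continuous bijection from a compact space into a Hausdorff space is automatically a homeomorphism onto its image. Accordingly, there are three things to check: compactness of $\RR(B)\cap\J_R$, continuity of the map $J\mapsto(\xi,f)$ in the stated topologies, and bijectivity onto its image.

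Continuity is essentially already at hand. Proposition \ref{P2.5} gives continuity of $J\mapsto(\xi,\nu_+)$ in the product of weak-$*$ topologies packaged by the metric \eqref{2.24}. Since $\rho$ is determined by $\xi$ through \eqref{2.5}--\eqref{2.6}, and the $f$-component is topologized precisely via weak-$*$ convergence of $\nu_+=f\,d\rho$, the map $J\mapsto(\xi,f)$ is literally the same as $J\mapsto(\xi,\nu_+)$. Bijectivity was spelled out in the discussion preceding the statement: every $J\in\RR(B)\cap\J_R$ yields admissible data $(\xi,f)$ with $\xi=f=1/2$ a.e.\ on $B$ and $0\le\xi,f\le 1$, and conversely \cite[Corollary 5.3]{Remac} produces a unique $J\in\RR(B)$ from any such pair; the uniqueness uses $|B|>0$ to force the half-line measures $\nu_{\pm}$ to have infinite support.

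For compactness, $\J_R$ is compact under $d$, so it suffices to show $\RR(B)$ is closed in $\J_R$; this is the content of Proposition \ref{P3.1}(a). The quick argument is that if $J_n\to J$ in $\J_R$, then by Proposition \ref{P2.5} we have $\xi_n\,dt\to\xi\,dt$ weak-$*$; since the $\xi_n$ are uniformly bounded by $1$ this upgrades to $L^\infty$ weak-$*$ convergence (approximate $\mathbf 1_E$ in $L^1$ by continuous functions), so testing against $\mathbf 1_E$ for arbitrary Borel $E\subset B$ and using $\xi_n=1/2$ a.e.\ on $B$ (Proposition \ref{P2.1}) yields $\xi=1/2$ a.e.\ on $B$, which is equivalent to $J\in\RR(B)$.

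Once these three ingredients are in place, the continuous bijection from the compact space $\RR(B)\cap\J_R$ into the Hausdorff target is closed, hence a homeomorphism, and its image is automatically a compact metric space. The only step requiring any genuine argument is the upgrade from weak-$*$ convergence of measures to pointwise a.e.\ information on the set $B$, and this is where I expect the main (mild) obstacle to lie; everything else is bookkeeping on top of Propositions \ref{P2.1} and \ref{P2.5}.
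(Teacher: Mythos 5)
Your overall strategy matches the paper's: exhibit $J\mapsto(\xi,f)$ as a continuous bijection from a compact metric space into a Hausdorff (metric) space, then invoke the standard principle that such a map is automatically a homeomorphism onto its image. The continuity and bijectivity points are handled the same way the paper does (via Proposition~\ref{P2.5} and the inverse spectral discussion preceding the statement, resp.), and the paper, like you, simply cites an external compactness result (\cite[Proposition 4.1]{Remac}) rather than proving it in place.

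The problem is the in-line ``quick argument'' you supply for closedness of $\RR(B)\cap\J_R$. You deduce from $\xi_n\,dt\to\xi\,dt$ weak-$*$ and uniform boundedness that $\xi=1/2$ a.e.\ on $B$, and then assert this ``is equivalent to $J\in\RR(B)$.'' That equivalence is false, and Proposition~\ref{P2.1} only gives the forward implication. The reflectionless condition requires $\mathrm{Re}\,g_n=0$ a.e.\ on $B$ for \emph{all} $n\in\Z$, whereas $\xi$ encodes only the diagonal Green function $g_0$. Concretely, in terms of the half-line $m$-functions, $\xi=1/2$ a.e.\ on $B$ amounts to $\mathrm{Re}(m_++m_-)=0$ a.e.\ on $B$, while reflectionless means $m_+=-\overline{m_-}$, which in addition requires $\mathrm{Im}(m_+-m_-)=0$ a.e.\ on $B$. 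Passing to the limit in the $\xi$ data alone therefore loses exactly the information ($f=1/2$ a.e.\ on $B$, equivalently the splitting of $\rho$ into $\nu_\pm$) that distinguishes reflectionless operators from operators merely having $\mathrm{Re}\,g_0=0$ on $B$. A correct self-contained argument would have to track $\nu_{n,+}\to\nu_+$ as well and show that the density $f$ of $\nu_+$ with respect to $\rho$ still equals $1/2$ Lebesgue-a.e.\ on $B$ in the limit; this is not immediate because $\rho_n$ and $\nu_{n,+}$ vary simultaneously. Given the difficulty, the safe route here is to do what the paper does and simply cite the external compactness result, or to fall back on Proposition~\ref{P3.1}(a) (being careful that its proof also ultimately rests on that same citation, and that it appears later in the paper).
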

We could, of course, use the metric from \eqref{2.24} again (with $d\nu_+=f\, d\rho$), but it is
also possible, as indicated, to just use
\[
d((\xi,f),(\xi',f'))=D(f\, d\rho, f'\,d\rho')
\]
instead. This works because a
reflectionless Jacobi matrix $J\in\RR(B)$ is already determined by its half line restriction
and thus also by $d\nu_+=f\, d\rho$.
Compare, for example, \cite[Proposition 4.1]{Remac}.
So it indeed suffices to work with the half line spectral measures $\nu_+$ when defining $d$:
if $d((\xi,f),(\xi',f'))=0$, then $\nu_+=\nu'_+$, which, as just explained, implies that
$J=J'$; in particular $\xi=\xi'$ and thus $(\xi,f)=(\xi',f')$, as required.
\begin{proof}
We already know that this map is injective, and it is obviously continuous, being a restriction
of the map from Proposition \ref{P2.5}. Moreover, $\RR(B)\cap\J_R$ is compact
(see again \cite[Proposition 4.1]{Remac}), and a continuous bijection between compact metric spaces
automatically has a continuous inverse.
\end{proof}

We now specialize further and seek a parametrization of
$\RR_0(K)$, for a compact set $K\subset\R$ with $|K|>0$.
We will use the symbol $\mathcal H(K)$ to denote
the collection of all measures $\rho$ from \eqref{2.6} that correspond to some $J\in\RR_0(K)$.
This is the set that was referred to in the formulation of Theorems \ref{T1.3} and \ref{T1.4}(b).
Similarly, we let $X(K)$ be the collection of the $\xi$ functions of the $J\in\RR_0(K)$.

By the spectral theorem, $g$ has a representation of the type $g(z)=\int_K \frac{d\mu(t)}{t-z}$, with
a probability measure $\mu$. This implies that on each bounded component $(a,b)\subset K^c$,
$\xi$ is a step function that jumps from $0$ to $1$:
\begin{equation}
\label{2.4}
\xi(t) = \chi_{(\mu,b)}(t) \quad (a<t<b)
\end{equation}
for some $\mu\in [a,b]$. These intervals $(a,b)$ will also be referred to as \textit{gaps }(of $K$).
Notice that the parameters $(\mu_j)$ (one for each gap) determine $\xi$. Indeed, we always
have that $\xi=1/2$ on $K$ and $\xi=1$ to the left of $K$ and $\xi=0$ to the right of $K$,
so it suffices to specify $\xi$ on each gap to have a complete definition of a $\xi\in X(K)$.

The correspondence
between $\mu$ and $\xi$ that is obtained in this way
is again a homeomorphism with respect to the natural topologies:
\begin{Proposition}
\label{P2.3}
Fix a compact, non-empty set $K\subset\R$, with gaps $(a_j,b_j)$.
The map $X(K)\to \prod [a_j,b_j]$ that sends $\xi$ to $\mu$ is a homeomorphism
if we use the product topology on the second space.
\end{Proposition}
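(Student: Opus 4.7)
The plan is to show that $\Phi\colon \xi\mapsto \mu$ is a continuous bijection between compact metric spaces; the standard fact that a continuous bijection between compact Hausdorff spaces is automatically a homeomorphism will then finish the proof.

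Well-definedness and injectivity of $\Phi$ are essentially contained in the discussion preceding the proposition: any $\xi\in X(K)$ satisfies $\xi=1/2$ a.e.\ on $K$ by Proposition \ref{P2.1}, $\xi=1$ to the left of $K$, $\xi=0$ to the right of $K$, and on each gap $(a_j,b_j)$ has the form $\chi_{(\mu_j,b_j)}$ from \eqref{2.4}. Hence $\xi$ is completely determined by the tuple $(\mu_j)$, so $\Phi$ is well defined and injective.

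For continuity, suppose $\xi_n\to\xi$ in $X(K)$, that is, $\xi_n\,dt\to\xi\,dt$ weak-$*$ (cf.\ Proposition \ref{P2.5}). Fix a gap $(a_j,b_j)$ and pick any continuous $\phi$ supported in this gap. Since $\xi_n=\chi_{(\mu_j^{(n)},b_j)}$ and $\xi=\chi_{(\mu_j,b_j)}$ on the gap,
\[
\int_{\mu_j^{(n)}}^{b_j}\phi(t)\,dt = \int \phi\,\xi_n\,dt \longrightarrow \int \phi\,\xi\,dt = \int_{\mu_j}^{b_j}\phi(t)\,dt .
\]
Letting $\phi$ localize near any point of $(a_j,b_j)$ forces $\mu_j^{(n)}\to\mu_j$; since this holds for every $j$, $\Phi$ is continuous into the product topology.

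Compactness of both sides, together with surjectivity, then closes the argument: $X(K)$ is compact as the image of the compact set $\RR_0(K)$ (Proposition \ref{P1.1}) under the continuous map $J\mapsto\xi$ from Proposition \ref{P2.5}, and $\prod_j[a_j,b_j]$ is compact in the product topology. For surjectivity, given $(\mu_j)\in\prod_j[a_j,b_j]$ one constructs $\xi$ by the explicit recipe (the prescribed jump in each gap, $\xi=1/2$ on $K$, and the appropriate constant values outside), builds the Herglotz function $H$ via \eqref{2.5}, reads off $\rho$ via \eqref{2.6}, and invokes Proposition \ref{P2.2} with a weight $f$ that equals $1/2$ a.e.\ on $K$ and is chosen in $\{0,1\}$ at each point mass of $\rho$ inside a gap, so as to put that mass entirely on one of the two half-line measures. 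The main obstacle is this last step: one has to check that an appropriate such choice really yields $J\in\RR_0(K)$ and not merely $J\in\RR(K)$, i.e., that the point masses of $\rho$ in the gaps do not produce full-line eigenvalues outside $K$. This is the one nontrivial input from inverse spectral theory; the remaining pieces (injectivity, continuity, and the compactness upgrade) are all short.
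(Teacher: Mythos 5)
Your proposal is correct and follows essentially the same route as the paper: establish bijectivity from the preceding inverse-spectral discussion, verify continuity of $\xi\mapsto\mu$ by testing the weak-$*$ convergence against functions supported in each gap, and upgrade to a homeomorphism via compactness of both sides. You also correctly flag the one point the paper itself passes over lightly, namely that the spectral construction from arbitrary $(\mu_j)$ really produces $J\in\RR_0(K)$ (with $\sigma(J)\subset K$) rather than just $J\in\RR(K)$ — this is exactly the content the paper delegates to the earlier discussion around \eqref{2.1}--\eqref{2.2} and the reference \cite[Section 6]{Remac}.
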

On $X(K)$, we of course use the weak-$*$ topology for $\xi\, dt$, as before. Both spaces
are in fact compact metric spaces, a possible choice for the metric on the second space is
\[
d(\mu,\mu') = \sum_{j=1}^{\infty} |\mu_j-\mu'_j| .
\]
\begin{proof}[Sketch of proof]
It is clear from our discussion above that the map $\xi\mapsto\mu$
is bijective, and continuity (in both directions) is easy to confirm.
\end{proof}

Every $\rho\in\mathcal H(K)$ is of the following form:
\begin{equation}
\label{2.1}
d\rho = \chi_K F\, dt + \chi_K\, d\rho_s + \sum w_j\delta_{\mu_j}
\end{equation}
Here, the sum is taken over all gaps for which $a_j<\mu_j<b_j$. We can be sure
that there will indeed be a point mass at all these $\mu_j$ (in other words, $w_j>0$)
because $\xi$ jumps from $0$ to $1$ at these points, so we can refer
to a basic criterion for the existence of point masses. See \cite[pg.\ 201]{MP} or
\cite[Lemma 2.4]{PolRem}. Similarly, $\rho$ can not have an additional singular part
off $K$; in fact, $H$ has a holomorphic continuation through every interval $I\subset K^c$
with $\mu_j\notin I$.
We would like to again remind the reader that $F>0$ almost everywhere on $K$, because
$\pi F=\textrm{Im }(-1/g)=|1/g|$, and this can not be zero on a positive measure set.

The function $f$ from above, which determines $\nu_+$ through \eqref{2.14},
now has to satisfy $f=1/2$ Lebesgue almost everywhere on $K$
and $f(\mu_j)=0$ or $1$ for all $j$ that
contribute to the sum in \eqref{2.1}. The first property was noted above for general $J\in\RR(K)$,
and the second property follows quickly from the additional requirement that $\sigma(J)\subset K$;
see \cite[Section 6]{Remac} for a more detailed discussion.

In other words,
\begin{equation}
\label{2.2}
d\nu_+ = \frac{1}{2}\chi_K F\, dt + \chi_K g\, d\rho_s + \sum\sigma_j w_j\delta_{\mu_j} ,
\end{equation}
where $g$ is a Borel function with $0\le g\le 1$ and $\sigma_j\in \{ 0, 1\}$.
We will use $(\mu,\sigma,g)$, with $\mu=(\mu_j)_j$,
$\sigma=(\sigma_j)_j$, and $g$ from \eqref{2.2},
as the spectral data for $J\in\RR_0(K)$. As above, we view $g$ as an element of
$L^1(K, d\rho_s)$, that is, we identify $g$'s that agree almost everywhere on $K$ with
respect to $\rho_s$. Frequently, no $\rho\in\mathcal H(K)$ can have a singular part on $K$, and then
we can discard $g$ altogether, and we parametrize $\RR_0(K)$ by just $(\mu,\sigma)$.

It is again true that,
conversely, any set of parameters $(\mu,\sigma,g)$ of this type
will correspond to a unique $J\in\RR_0(K)$. Let us describe the
corresponding procedure one more time: Given $(\mu,\sigma,g)$,
we first of all define $\xi$ by \eqref{2.4} and then, as usual, construct
$\rho$ from \eqref{2.5}, \eqref{2.6}. Then \eqref{2.2} gives us
$\nu_+$, and then $\nu_-$ is obtained from \eqref{2.12}. From
the half line spectral measures, we can recover a unique $J$ if $\nu_{\pm}$ have
infinite supports, and this is automatic here.
This Jacobi matrix $J$ will lie in $\RR_0(K)$.

This parametrization of $\RR_0(K)$ again has the
desired continuity properties:
\begin{Proposition}
\label{P2.4}
Fix a compact, non-empty set $K\subset\R$ of positive Lebesgue measure.
The map that sends $J\in\RR_0(K)$ to $(\mu,\sigma,g)$ is a homeomorphism from
$\RR_0(K)$ onto its image. Here, we use the metric from Proposition \ref{P2.2}:
\[
d((\mu,\sigma,g),(\mu',\sigma',g'))=D(\nu_+,\nu'_+) ,
\]
where $\nu_+=\nu_+(\mu,\sigma,g)$ is as in \eqref{2.2}, and
the metric $D$ induces the weak-$*$ topology on these measures.
\end{Proposition}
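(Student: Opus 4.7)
The plan is to piece Proposition \ref{P2.4} together from Propositions \ref{P1.1}, \ref{P2.2}, \ref{P2.3}, and \ref{P2.5}, with compactness doing the heavy lifting for continuity of the inverse. Thus I do not expect to need any genuinely new input beyond the parametrization that has already been set up in this section.

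First I would check that the map $\Phi: J \mapsto (\mu,\sigma,g)$ is well-defined and injective on $\RR_0(K)$. Well-definedness, i.e., that $\mu_j\in[a_j,b_j]$, $\sigma_j\in\{0,1\}$, and $0\le g\le 1$, is exactly what the discussion preceding \eqref{2.2} establishes. For injectivity, I would argue that the triple $(\mu,\sigma,g)$ reconstructs $\nu_+$ via the recipe laid out in this section: $\mu$ determines $\xi$ by Proposition \ref{P2.3}, $\xi$ yields $\rho$ through \eqref{2.5} and \eqref{2.6}, and then $(\sigma,g,\rho)$ assembles $\nu_+$ via \eqref{2.2}. Once $\nu_+$ is known, uniqueness of the reflectionless $J\in\RR(K)$ with this half-line datum follows from Proposition \ref{P2.2} (together with the remark after it that $\nu_+$ alone already determines a reflectionless matrix). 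This simultaneously shows that $d$ as stated is a bona fide metric on the image of $\Phi$, since $D(\nu_+,\nu'_+)=0$ forces $\nu_+=\nu'_+$, hence $J=J'$, and hence $(\mu,\sigma,g)=(\mu',\sigma',g')$.

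Next, continuity of $\Phi$ is essentially tautological. By definition, the image metric is $D(\nu_+,\nu'_+)$, and Proposition \ref{P2.5} tells us that $J\mapsto\nu_+$ is continuous with respect to the weak-$*$ topology, which is what $D$ metrizes. So $J_n\to J$ in $\RR_0(K)$ immediately gives $D(\nu_+^{(n)},\nu_+)\to 0$, i.e., $\Phi(J_n)\to\Phi(J)$ in the image metric.

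For continuity of $\Phi^{-1}$, I would invoke compactness: $\RR_0(K)$ is compact by Proposition \ref{P1.1}, while the image, as a subset of a metric space, is Hausdorff. A continuous bijection from a compact space to a Hausdorff space is automatically a homeomorphism, so $\Phi^{-1}$ is continuous without any further computation. The main obstacle, such as it is, lies in the bookkeeping of the first step, namely keeping track of how $(\mu,\sigma,g)$ reassemble into $\nu_+$ through \eqref{2.4}, \eqref{2.5}, \eqref{2.6}, \eqref{2.2} and verifying that no information is lost along the way; all other ingredients are already in place from the earlier propositions of this section.
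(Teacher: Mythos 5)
Your proof is correct and mirrors the paper's own argument: injectivity comes from the reconstruction of $\nu_+$ from $(\mu,\sigma,g)$ and the uniqueness established around Proposition \ref{P2.2}, continuity is inherited as a restriction of the earlier parametrization maps, and continuity of the inverse follows from compactness of $\RR_0(K)$ together with the fact that a continuous bijection from a compact space to a Hausdorff space is a homeomorphism. The paper states this almost verbatim, only citing Proposition \ref{P2.2} rather than \ref{P2.5} for the continuity step, which is an immaterial difference since the former is itself a restriction of the latter.
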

\begin{proof}
As explained above, this map is a bijection onto its image, and it is clearly
continuous because it can be thought of as a restriction of the map from Proposition \ref{P2.2}.
Since $\RR_0(K)$ is compact, the image is compact, too, and continuity of the inverse map
is automatic.
\end{proof}

Note that the individual parts of the decomposition \eqref{2.2} of
$\nu_+$ do not necessarily depend continuously on $J\in\RR_0(K)$.
For example, Theorem \ref{T4.1} shows
that there are situations where $J_n\to J$ and $(\nu_+)_s(K)>0$, but $(\nu_{n,+})_s(K)=0$.

As our first application of the material discussed in this section, let us show that
Theorem \ref{T1.3} is a direct consequence of the parametrization from Proposition \ref{P2.4}.
\begin{proof}[Proof of Theorem \ref{T1.3}]
We now assume
that $\rho_s(K)=0$ for all $\rho\in\mathcal H(K)$, so Proposition \ref{P2.4} in fact provides
a homeomorphism that sends
$J\in\RR_0(K)$ to $(\mu,\sigma)$. We want to
map these data onto $\mathbb T^N$, with $N\in\N\cup\{\infty \}$
being the number of gaps. We will do this componentwise, by mapping
$(\mu_j,\sigma_j)$ for fixed $j$ to a copy of the unit circle $S$.
Before we give the precise definition of this map, let us observe the following: Suppose
that $J_n,J\in\RR_0(K)$, $J_n\to J$. Then $\mu_j^{(n)}\to\mu_j$ as $n\to\infty$ for every
fixed $j$, and if $\mu_j\not= a_j,b_j$, then also $\sigma_j^{(n)}\to\sigma_j$.
Recall in this context that the parameter $\sigma_j$ is not used
if $\mu_j=a_j$ or $\mu_j=b_j$.

To prove this, we can argue as follows: The convergence $\mu_j^{(n)}\to\mu_j$ is an
immediate consequence of Propositions \ref{P2.5}, \ref{P2.3}.
We also know from the proof of Proposition \ref{P2.5} that
$\rho_n\to\rho$.
In our current situation,
with $\rho_s(K)=0$, we can rewrite \eqref{2.2} as
\begin{equation}
\label{2.18}
d\nu_+=\frac{1}{2}\, d\rho + \frac{1}{2} \sum (2\sigma_j-1)w_j\delta_{\mu_j} .
\end{equation}
Now notice that if $\mu_j\not= a_j,b_j$, then we must also have that
\begin{equation}
\label{2.19}
w_j = \lim_{n\to\infty} w_j^{(n)}
\end{equation}
here. Indeed, $w_j=\int f\, d\rho$ and $w_j^{(n)}=\int f\, d\rho_n$
for all large $n$ for a suitable continuous test function $f$ that is supported by the
gap $(a_j,b_j)$ and equal to $1$ in a neighborhood of $\mu_j$, so \eqref{2.19}
follows from the weak-$*$ convergence $\rho_n\to\rho$.
We also have that $\nu_{n,+}\to\nu_+$, so \eqref{2.18}
implies that $\sigma_j^{(n)}\to\sigma_j$, as claimed.

We are now ready to describe the sought map from our parameter space onto
$\mathbb T^N$. More precisely, we will give the inverse map.
Let $(z_j)_j=(e^{i\pi x_j})_j\in\mathbb T^N$. For fixed $j$, let $F_j$ be the following map:
\[
F_j(e^{i\pi x_j}) = \begin{cases} (a_j+ x_j(b_j-a_j) , 1) & 0< x_j < 1 \\
(a_j - x_j (b_j-a_j),0) & -1< x_j < 0 ; \end{cases}
\]
we also send $z_j=1$ to $a_j$ and $z_j=-1$ to $b_j$, as suggested by
these formulae.
We then define $F$ as the map that sends $(z_j)_j\in\mathbb T^N$ to $(F(z_j))_j$.

Our preparatory discussion makes it clear that the induced map $J\mapsto z$ is continuous.
Since we clearly have a bijection between the compact metric spaces $\RR_0(K)$ and
$\mathbb T^N$, this map automatically has to be a homeomorphism.
\end{proof}
In this argument,
we established the continuity of the map $\RR_0(K)\to\mathbb T^N$ that we set
up above. As usual, this was sufficient because we have a bijection between compact metric spaces.
It is interesting to note that, contrary to what one would normally expect,
an easy direct argument for the continuity of the inverse map does not seem available.
Rather, we run into difficulties very similar to the ones that will occupy us in the main part of
this paper: Could it happen that for $z'\in\mathbb T^N$ arbitrarily close to a fixed
$z\in\mathbb T^N$, part (or all) of the measure $\chi_K\rho$ gets sucked out of $K$ and
moved into the gaps? The weak-$*$ continuity of $\rho$ does not prevent this, and if it happens,
then $(\sigma_j)_j$ could be discontinuous (so it does not happen here, but this an indirect
argument and in fact the one we just gave).
\section{Approximation by periodic operators}
Our main goal in this section is to prove Theorem \ref{T1.1}. Before we embark on this assignment,
we discuss how to derive Theorem \ref{T1.2} from Theorem \ref{T1.1}. We will need the following
observation. Actually, we will only need part (a) of Proposition \ref{P3.1} here, but part (b)
will be needed later on, in the proof of Theorem \ref{T1.4}, and this seems a good place to present it.
\begin{Proposition}
\label{P3.1}
Let $J_n,J\in\J_R$ and suppose that $d(J_n,J)\to 0$.\\
(a) If $J_n\in\RR(B_n)$ and $|B\setminus B_n|\to 0$, then $J\in\RR(B)$.\\
(b) If $h(K_n,K)\to 0$ and $\sigma(J_n)\subset K_n$, then $\sigma(J)\subset K$.

In particular, if $J_n\in\RR_0(K_n)$ and $\delta(K_n,K)\to 0$, then $J\in\RR_0(K)$.
\end{Proposition}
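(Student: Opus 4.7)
The plan is simple: derive the ``in particular'' clause from (a) and (b), then treat (a) via Krein functions and (b) via the integral representation of the Green function. Concretely, $\delta(K_n,K)\to 0$ bundles together $h(K_n,K)\to 0$ and $|K_n\Delta K|\to 0$, so in particular $|K\setminus K_n|\to 0$; part (a) applied with $B_n=K_n$, $B=K$ then gives $J\in\RR(K)$, and part (b) gives $\sigma(J)\subset K$, hence $J\in\RR_0(K)$.

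For (a), I would pass to Krein functions. Proposition~\ref{P2.5} ensures $\xi_n\,dt\to\xi\,dt$ in the weak-$*$ sense. Since $0\le\xi_n\le 1$ uniformly, a standard density argument (approximate $f\in L^1$ by compactly supported continuous functions and absorb the error via the uniform $L^\infty$ bound) promotes this to weak-$*$ convergence in $L^\infty=(L^1)^*$: $\int f\xi_n\,dt\to\int f\xi\,dt$ for every $f\in L^1(\R)$. Now test with $f=\chi_A$ for an arbitrary measurable $A\subset B$. By Proposition~\ref{P2.1}, $\xi_n=1/2$ almost everywhere on $B_n$, and $|B\setminus B_n|\to 0$, so
\[
\int_A\xi_n\,dt=\tfrac12|A\cap B_n|+\int_{A\setminus B_n}\xi_n\,dt \longrightarrow \tfrac12|A|,
\]
which forces $\int_A\xi\,dt=|A|/2$ for every such $A$. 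Taking $A=B\cap\{\xi>1/2\}$ and $A=B\cap\{\xi<1/2\}$ separately then forces both sets to have measure zero, so $\xi=1/2$ almost everywhere on $B$ and $J\in\RR(B)$.

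For (b), fix $t_0\notin K$. Since $K$ is compact and $h(K_n,K)\to 0$, there exist an open interval $I\ni t_0$ and an index $n_0$ such that $K_n\cap\overline I=\emptyset$ for all $n\ge n_0$. The diagonal Green function has the integral representation $g_n(z)=\int d\mu_n(t)/(t-z)$, with $\mu_n$ a probability measure supported on $\sigma(J_n)\subset K_n$, so $g_n$ extends holomorphically across $I$ and is uniformly bounded on any compact subset of an open neighborhood $V$ of $I$ disjoint from $K\cup\bigcup_{n\ge n_0}K_n$. Strong-operator convergence $J_n\to J$ gives $g_n\to g$ locally uniformly on $\C^+$, so Montel's theorem together with the identity theorem furnishes an analytic continuation of $g$ across $I$; hence $t_0\notin\sigma(J)$. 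The main obstacle is conceptual and sits in part (a): weak-$*$ convergence of measures against continuous test functions does not in general pass to indicator functions of measurable sets, so the decisive step is upgrading to $L^\infty$ weak-$*$ convergence via the uniform bound on the Krein functions, which is what legitimizes testing against $\chi_A$.
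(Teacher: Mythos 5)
Your route is genuinely different from the paper's — the paper handles (a) by passing to a subsequence so that $A=\bigcap(B\cap B_n)$ has $|B\setminus A|<\epsilon$, invoking compactness of $\RR(A)\cap\J_R$ (a black box from \cite[Prop.\ 4.1(d)]{Remac}) to get $J\in\RR(A)$, and then letting $\epsilon\to 0$; for (b) it simply cites \cite[Thm.\ VIII.24(a)]{RS1}. Your plan for the ``in particular'' clause is fine, and the $L^\infty$-weak-$*$ upgrade via the uniform bound $0\le\xi_n\le 1$ is correct and clean. But both of your main arguments share the same gap: they only control the data attached to the single site $n=0$, while the conclusions concern all sites.

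In (a), you establish that the Krein function $\xi$ of $H=-1/g_0$ equals $1/2$ a.e.\ on $B$, which is exactly $\operatorname{Re}g_0=0$ a.e.\ on $B$. But $J\in\RR(B)$ requires $\operatorname{Re}g_n=0$ a.e.\ on $B$ for \emph{every} $n\in\Z$, and the site-$0$ condition does not imply the others (in the paper's own parametrization, $\RR(B)$ is cut out by two conditions, $\xi=1/2$ \emph{and} $f=1/2$ on $B$, not by $\xi=1/2$ alone). This is fixable in one line: the hypotheses are shift-invariant, so repeat the argument for $S^mJ_n\to S^mJ$ to get the Krein function at every site $m$, which does characterize $\RR(B)$. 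You should say this explicitly.

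In (b), the Montel/identity-theorem argument shows that $g=g_0^{(J)}$ continues analytically across $I$, i.e.\ the spectral measure of $\delta_0$ gives no weight near $t_0$. That is strictly weaker than $t_0\notin\sigma(J)$: a whole-line Jacobi matrix can have an eigenfunction vanishing at $n=0$, so $g_0$ can be holomorphic at a point of $\sigma(J)$. You need to control the resolvent globally, not one diagonal entry. Either run the same argument for all $g_m^{(J)}$ (or all matrix elements), or, more simply, note that $\operatorname{dist}(z,K_n)\ge r/2$ gives $\|(J_n-z)u\|\ge (r/2)\|u\|$ for all $u$, and strong convergence $J_nu\to Ju$ passes this lower bound to $J$, so $J-z$ is boundedly invertible — which is essentially the content of the Reed--Simon theorem the paper cites.
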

\begin{proof}
(a) Let $\epsilon>0$ be given. By passing to a subsequence, we can assume that
$|B\setminus B_n| < 2^{-n}\epsilon$. Let $A=\bigcap (B\cap B_n)$. Then
$J_n\in\RR(A)$, and $\RR(A)\cap\J_R$ is a compact set (see, for example \cite[Proposition 4.1(d)]{Remac}),
hence $J\in\RR(A)$ as well. Here, $|B\setminus A|<\epsilon$, and
$\epsilon>0$ was arbitrary, so from the definition of $\RR(B)$ we now
see that in fact $J\in\RR(B)$, as claimed.

(b) Let $x\in\R\setminus K$. Since $K$ is compact, we then have that $(x-2r,x+2r)\subset K^c$
for suitable $r>0$, and thus $(x-r,x+r)\subset K_n^c$ for all sufficiently large $n$. As
$J_n\to J$ in the strong operator topology, this implies that $x\notin\sigma(J)$;
see \cite[Theorem VIII.24(a)]{RS1}.
\end{proof}
Note that considerations of this type also give Proposition \ref{P1.1};
in fact, Proposition \ref{P1.1} could be viewed as a special case of the last part
of Proposition \ref{P3.1}, with $K_n=K$. Indeed, the Proposition says that
$\RR_0(K)$ is closed, so, since $\RR_0(K)\subset\J_R$ for large $R>0$ and
$\J_R$ is compact, it follows that $\RR_0(K)$ is compact also.

Now return to our original topic.
Assuming Theorem \ref{T1.1}, we obtain Theorem \ref{T1.2} as follows.
\begin{proof}[Proof of Theorem \ref{T1.2}]
Let $J\in\RR(B)$. Pick $J_n\in\RR_0(P_n)$ as in Theorem \ref{T1.1}. Periodicity means
that $S^{N_n}J_n=J_n$, where $N_n\in\N$ is the period,
and thus the $\varphi(J_n)$ are periodic as well.
Moreover, they have the same spectra as the $J_n$, and since a periodic operator is reflectionless
precisely on its spectrum, it follows that $\varphi(J_n)\in\RR(P_n)$. By continuity,
$\varphi(J_n)\to \varphi(J)$, and thus we can apply Proposition \ref{P3.1}(a)
to conclude the proof.
\end{proof}
\begin{proof}[Proof of Theorem \ref{T1.1}]
Let $J\in\RR(B)\cap\J_R$, with $B\subset (-R,R)$.
By removing a set of Lebesgue measure zero from $B$, we can
assume that $\rho_s(B)=0$; here, $\rho$ again denotes the measure that is associated with
the $H$ function of $J$,
as in \eqref{2.6}. In fact, by passing to a subset of almost the same Lebesgue measure,
we can also assume that $B$ is compact and has no isolated points.

Given the material from Section 2, especially Proposition \ref{P2.5}, we can describe
our goal as follows: We would like to construct finite gap sets $P_n$ (the periodicity
will follow from a density argument; this issue can be ignored for now) and
data $(\xi_n,\nu_{n,+})$ (say), corresponding to $J_n\in\RR_0(P_n)$, so that these
data approach the corresponding data $(\xi,\nu_+)$ of $J\in\RR(B)$, and $|B\Delta P_n|\to 0$.

Let $\epsilon>0$ be given.
The complement $B^c\cap (-R,R)$ is a disjoint union of bounded open intervals, and by keeping only
a finite large number of these gaps, we obtain a finite gap set
\[
A_0 = \bigcup_{j=1}^N J_j , \quad\quad J_j=[a_j,b_j]
\]
that satisfies $A_0\supset B$, $|A_0\setminus B|<\epsilon$, $\rho(A_0\setminus B)<\epsilon$.
Notice that the endpoints of these intervals $J_j$ belong to $B$. We chose $B$ so that $\rho_s(B)=0$.
Therefore, $\rho(\partial A_0)=0$. As a consequence, we obtain the following:
\begin{Lemma}
\label{L3.1}
If $\rho_n\to\rho$, then also $\chi_{A_0}\rho_n\to\chi_{A_0}\rho$ and $\chi_{A_0^c}\rho_n\to
\chi_{A_0^c}\rho$.
\end{Lemma}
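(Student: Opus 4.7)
The plan is to recognize Lemma \ref{L3.1} as a direct application of the Portmanteau theorem to the finite gap set $A_0$, exploiting the key measure-theoretic fact just established in the preceding paragraph, namely $\rho(\partial A_0)=0$. Since all measures in sight are supported in the compact interval $[-R,R]$, the weak-$*$ hypothesis $\rho_n\to\rho$ is equivalent to $\int\phi\,d\rho_n\to\int\phi\,d\rho$ for every $\phi\in C([-R,R])$, and the task is to verify the same for the restricted measures $\chi_{A_0}\rho_n$ and $\chi_{A_0^c}\rho_n$. The second assertion follows from the first by linearity, since $\chi_{A_0^c}\rho_n=\rho_n-\chi_{A_0}\rho_n$, so I can focus entirely on showing $\int_{A_0}\phi\,d\rho_n\to\int_{A_0}\phi\,d\rho$ for every $\phi\in C([-R,R])$, and by splitting $\phi=\phi_+-\phi_-$ I may further assume $\phi\ge 0$.

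Given $\eta>0$, the strategy is to sandwich $\chi_{A_0}$ between continuous functions that are close in $L^1(\rho)$. Using Urysohn's lemma, I would choose continuous functions $u,v\colon[-R,R]\to[0,1]$ with $u\le\chi_{A_0}\le v$ and with $v-u$ supported in an open neighborhood of $\partial A_0$ of $\rho$-measure less than $\eta$; this is exactly where the hypothesis $\rho(\partial A_0)=0$ enters, since the finite boundary $\partial A_0$ has a neighborhood base of arbitrarily small $\rho$-measure. The functions $\phi u$ and $\phi v$ are continuous, so the weak-$*$ hypothesis applies to them, and the sandwich
\[
\int\phi u\,d\rho_n \;\le\; \int_{A_0}\phi\,d\rho_n \;\le\; \int\phi v\,d\rho_n
\]
yields
\[
\int\phi u\,d\rho \;\le\; \liminf_{n\to\infty}\int_{A_0}\phi\,d\rho_n \;\le\; \limsup_{n\to\infty}\int_{A_0}\phi\,d\rho_n \;\le\; \int\phi v\,d\rho .
\]
The two outer quantities both differ from $\int_{A_0}\phi\,d\rho$ by at most $\|\phi\|_\infty\,\eta$, and since $\eta>0$ was arbitrary the desired convergence follows.

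There is no serious obstacle here; the one substantive ingredient, $\rho(\partial A_0)=0$, has already been secured by the choice of $B$ (with $\rho_s(B)=0$) together with the fact that the endpoints of the intervals $J_j$ were placed inside $B$. Without this boundary-null property the sandwich approximants $u$ and $v$ could not be made $L^1(\rho)$-close, and the argument would break down — this is in fact the mechanism behind the pathological examples mentioned in the paper where mass can migrate out of $K$ into the gaps.
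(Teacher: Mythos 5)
Your proof is correct and is precisely the standard Portmanteau-type argument that the paper implicitly relies on; the authors simply state the lemma as an immediate consequence of $\rho(\partial A_0)=0$ without supplying the sandwich construction you carry out. The reduction via linearity, the Urysohn approximants pinched around the finite set $\partial A_0$, and the role of $\rho(\partial A_0)=0$ in making $\|v-u\|_{L^1(\rho)}$ small are all exactly what the paper's one-line justification points to.
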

As usual, these limits refer to the weak-$*$ topology.

We decompose
\[
\rho = \rho_1 + \rho_2 ,\quad\quad \rho_1=\chi_B\rho=\chi_B\rho_{ac} ,\quad \rho_2=\chi_{B^c}\rho ,
\]
and we write accordingly
\begin{equation}
\label{3.3}
\nu_+=\frac{1}{2}\rho_1 + f\rho_2 ,
\end{equation}
with a Borel function $f$ that satisfies $0\le f\le 1$. We know that
this density equals $1/2$ on $B$ because $J\in\RR(B)$; compare the discussion
that precedes Proposition \ref{P2.2}.
For future use, we record that
\begin{equation}
\|\chi_{A_0}\rho-\rho_1\| = \|\chi_{A_0^c}\rho-\rho_2\|  =\rho(A_0\setminus B) < \epsilon .
\label{3.2b}
\end{equation}

We now construct preliminary versions $A_n$ of the sets $P_n$
and associated Krein function $\xi_n$ (which will
correspond to certain $J_n\in\RR_0(A_n)$, to be specified at a later stage).
For each component
$I\subset B^c\cap (-R,R)$, we have either $I\subset A_0^c$ or $I\subset J_j\subset A_0$.
Let $\{ I_n \}_{n\ge 1}$ be an enumeration of those gaps of $B$
that are contained in $A_0$, and set
\[
\widetilde{A}_n = A_0 \setminus \bigcup_{j=1}^n I_j .
\]
This is still a finite gap set, and $B\subset\widetilde{A}_n\subset A_0$.
Obtain $A_n$ from $\widetilde{A}_n$ by subdividing each component of
$\widetilde{A}_n^c\cap (-R,R)$ into $n$ smaller gaps of equal length,
which are separated by $n-1$ very small intervals
of size $\delta_n>0$ each; these intervals will be referred to as \textit{bands }from now on;
the band size $\delta_n$ will be chosen later.

On each of these new (small) gaps, let $\xi_n$ jump
from $0$ to $1$ in a such a way that the average has the correct value.
More precisely, if $(a,b)$ is a gap of $A_n$, then put $\xi_n=\chi_{(\mu,b)}$ on $(a,b)$, where
$\mu=b-\int_a^b\xi\, dx$ (so $\int_a^b\xi_n=\int_a^b\xi$). If this leads to $\mu=b$ and $b$
is the left endpoint of a band, then we delete this band and set $\xi_n=0$ on the slightly
larger interval $(a,b+\delta_n)$. Do this for all such gaps (if any). Then use
the analogous procedure for those gaps where $\mu=a$ and $a$ is the
right endpoint of a band. Finally, we set $\xi_n=1/2$ on $A_n$.

This Krein function $\xi_n$ lies in $X(A_n)$; it corresponds to certain operators
from $\RR_0(A_n)$. As usual, it also defines a measure $\rho_n$ via \eqref{2.5}, \eqref{2.6}.
We claim that if the $\delta_n$ approach zero sufficiently rapidly, then we can achieve that
\begin{equation}
\label{3.1}
\rho_n(A_n\setminus A_0)\to 0 .
\end{equation}
This will certainly follow if we can show that $\rho_n$ gives little weight to the newly
introduced bands of $A_n$. We need the following auxiliary calculation.
\begin{Lemma}
\label{L3.2}
Fix $0<A,B<R$ and define, for sufficiently small $\delta>0$,
\[
\xi_{\delta}(x) = \begin{cases} 0 & \delta<x<B \\ 1 & -A<x<0 \\ 1/2 & 0 < x<\delta
\end{cases}.
\]
Then
\[
\lim_{\delta\to 0+} \sup \rho([0,\delta]) = 0 ,
\]
where the supremum is taken over all $\rho$ whose Krein functions $\xi\in L^{\infty}(-R,R)$
agree with $\xi_{\delta}$ on $(-A,B)$.
\end{Lemma}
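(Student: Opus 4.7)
The plan is to bound $\rho([0,\delta])$ directly from the explicit formula \eqref{2.5}. On the interval $(0,\delta)$, the hypothesis $\xi=1/2$ forces $\arg H(x+i0) = \pi/2$ a.e., so $H(x+i0) = i|H(x+i0)|$ is purely imaginary with positive imaginary part. At the endpoints $0$ and $\delta$, $\xi$ has only half-unit jumps (from $1$ to $1/2$ and from $1/2$ to $0$), not the full unit jumps that would produce poles of $H$, so $\rho$ has no atoms at these points. Combining with Stieltjes inversion, $\rho([0,\delta]) = \pi^{-1}\int_0^\delta \mathrm{Im}\,H(x+i0)\,dx \le \pi^{-1}\int_0^\delta |H(x+i0)|\,dx$.

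The next step is to estimate $|H(x+i0)|$ for $x\in(0,\delta)$ via \eqref{2.5}, which gives $\log|H(x+i0)| = \log(x+R) + \mathrm{p.v.}\int_{-R}^R \xi(t)(t-x)^{-1}\,dt$. I would split the integral across the five pieces $(-R,-A)$, $(-A,0)$, $(0,\delta)$, $(\delta,B)$, $(B,R)$. On the middle three, where $\xi = \xi_\delta$ is prescribed, direct evaluation gives
\begin{equation*}
\int_{-A}^0 \frac{dt}{t-x} + \tfrac12\,\mathrm{p.v.}\!\int_0^\delta \frac{dt}{t-x} + 0 \;=\; \log\frac{\sqrt{x(\delta-x)}}{A+x}.
\end{equation*}
On the two outer pieces, $|t-x|$ is bounded below by $A$ and by $B-\delta$ respectively (for $x\in(0,\delta)$ and $\delta$ small), so the Cauchy kernel is uniformly bounded and the contributions are dominated by a constant $C=C(A,B,R)$ independent of the unconstrained values of $\xi$ there. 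Combining yields
\begin{equation*}
|H(x+i0)| \;\le\; e^C(x+R)\,\frac{\sqrt{x(\delta-x)}}{A+x} \;\le\; C'\sqrt{x(\delta-x)}
\end{equation*}
for $\delta\le A$, with $C'=C'(A,B,R)$.

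Integrating, $\int_0^\delta\sqrt{x(\delta-x)}\,dx = \pi\delta^2/8$, so $\rho([0,\delta]) \le C'\delta^2/8 \to 0$ as $\delta \to 0^+$, uniformly over all admissible $\xi$. The main work is the bookkeeping of the principal value computation and the verification that $\rho$ carries no mass at the endpoints; the uniformity over the uncontrolled tails of $\xi$ on $(-R,-A)\cup(B,R)$ is immediate from the boundedness of $1/(t-x)$ away from $[0,\delta]$. A small technical point is that one actually applies dominated convergence to pass from $\mathrm{Im}\,H(x+iy)$ to $|H(x+i0)|$; the same type of five-piece estimate, carried out at positive $y$, provides a $y$-uniform majorant of the form $C'((x^2+y^2)((\delta-x)^2+y^2))^{1/4}$, which is integrable on $(0,\delta)$ and justifies the limit.
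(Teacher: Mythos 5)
Your proof is correct and follows essentially the same route as the paper: use the explicit formula \eqref{2.5}, evaluate the middle contribution to obtain $h_{\delta}(x)=\sqrt{x(\delta-x)}/(A+x)$, and integrate. The only cosmetic difference is that you bound the tail integrals over $(-R,-A)\cup(B,R)$ crudely by the supremum of $|t-x|^{-1}$ (valid since $\xi\in[0,1]$), whereas the paper identifies the extremal tail configuration $\xi=0$ on $(-R,-A)$, $\xi=1$ on $(B,R)$; both give the same $O(\delta^2)$ conclusion, uniformly over admissible $\xi$.
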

\begin{proof}
Since $\xi=1/2$ on $(0,\delta)$, it follows that $\rho$ is purely absolutely continuous
on this interval, with density equal to $(1/\pi)|H(x)|$. From \eqref{2.5}, we have that
for $0<x<\delta$,
\[
|H(x)| = (x+R) h_{\delta}(x) \exp\left( \int_{(-R,R)\setminus (-A,B)} \frac{\xi(t)\, dt}{t-x} \right) ,
\]
where $h_{\delta}(x) = \lim_{y\to 0+}
\exp\left( \int_{-A}^B \frac{(t-x)\xi_{\delta}(t)\, dt}{(t-x)^2+y^2}\right)$.
In fact, this can be evaluated:
\[
h_{\delta}(x) = \frac{\sqrt{x(\delta -x)}}{A+x}
\]
These formulae make it clear, first of all,
that the arrangement that maximizes $\rho([0,\delta])$ is the one
where $\xi=0$ on $(-R,-A)$ and $\xi=1$ on $(B,R)$. It is then straightforward to estimate
$\rho([0,\delta])$ for
this measure and confirm that this quantity approaches zero as $\delta\to 0+$.
We leave the details to the reader.
\end{proof}
This Lemma indeed establishes \eqref{3.1} because we deleted those bands for which we don't have
the situation described in the Lemma ($\xi_n=1$ to the left of the band and $\xi_n=0$ to the right).

Moreover, by taking $\delta_n\to 0$ so fast that also $|A_n\setminus B|\to 0$, we can make sure that
$\xi_n\, dx \to \xi\, dx$ in weak-$*$ sense. This follows now because $\xi=1/2$ on $B$,
so if a continuous $f$ is given, we can split
\begin{equation}
\label{3.71}
\int f\xi_n\, dx = \frac{1}{2} \int_{A_n} f\, dx + \int_{A_n^c} f\xi_n\, dx .
\end{equation}
The first integral on the right-hand side converges to $\int_B f\xi\, dx$ as $n\to\infty$.
To deal with the last integral from \eqref{3.71}, we approximate $f$ on $B^c\supset A_n^c$
uniformly by functions $g_n$ that are
constant on the gaps of $A_n$, and now our definition of $\xi_n$ on these intervals guarantees that
$\int_I g_n\xi_n\, dx = \int_I g_n\xi\, dx$ for each gap $I$ of $A_n^c$. Since
$|B^c\setminus A_n^c|\to 0$, this implies that $\int_{A_n^c} f\xi_n\to \int_{B^c} f\xi$.

It follows from this that $\rho_n\to\rho$. Indeed, the weak-$*$ convergence $\xi_n\, dx\to\xi\, dx$
clearly implies that $H_n(z)\to H(z)$ locally uniformly on $\C^+$, and this in turn shows that
$\rho_n\to\rho$; compare again \cite[Theorem 2.1]{Remac}.

Thus we also have that $\chi_{A_0}\rho_n\to\chi_{A_0}\rho$
and $\chi_{A_0^c}\rho_n\to\chi_{A_0^c}\rho$, by Lemma \ref{L3.1}.

Next, pick a $g\in C_0^{\infty}(\R)$ so that $0\le g\le 1$ and
\begin{equation}
\label{3.41}
\|f\rho_2-g\rho_2\| < \epsilon ,
\end{equation}
where $f$ is the density from \eqref{3.3}.
Since $g$ is continuous, we then have that
$g\chi_{A_0^c}\rho_n\to g\chi_{A_0^c}\rho$, and, by \eqref{3.1},
it is also true that
\[
g\chi_{A_0^c\cap A_n^c}\rho_n\to g\chi_{A_0^c}\rho .
\]
These approximating measures are pure point measures:
\begin{equation}
\label{3.101}
\chi_{A_0^c\cap A_n^c}\rho_n = \sum_{j=1}^{N_n} w_j^{(n)}\delta_{x_j^{(n)}}
\end{equation}
Our original goal was to approximate $\nu_+$ from \eqref{3.3} by a $\nu_{n,+}$ that is
obtained by splitting $\rho_n$ and thus corresponds
to a $J_n\in\RR_0(A_n)$. Recall from Section 2 how those plus measures were obtained: we split
$\rho=\nu_+ + \nu_-$, and if $J\in\RR(B)$, then we have to put exactly one half of the absolutely
continuous part of $\rho$ on $B$ into $\nu_+$. For a general $J\in\RR(B)$, there is no restriction on
how to distribute the singular part between $\nu_+$ and $\nu_-$. However, our goal is
to construct approximations $J_n\in\RR_0(A_n)$ (note the index $0$!),
and then either all or nothing of each
point mass $w_j^{(n)}\delta_{x_j^{(n)}}$ has to go into $\nu_{n,+}$. This is an unwelcome
restriction because ideally we would have liked to put the fraction $g(x_j^{(n)})w_j^{(n)}
\delta_{x_j^{(n)}}$ into $\nu_{n,+}$.
We overcome this obstacle by splitting each point mass into two new point masses whose ratio is at
our disposal.
\begin{Lemma}[The Splitting Lemma]
\label{L3.3}
Let $0<A,B<R$, and let $\xi_0:(-R,R)\to [0,1]$ be a Borel function whose restriction to
$(-A,B)$ is $\chi_{(0,B)}$. For sufficiently small
$\delta>0$ and fixed $0 < g < 1$, define
\[
\xi_{\delta}(x) = \begin{cases} 1 & -g\delta < x < 0 \\ 1/2 & 0<x<\delta^2 \\
0 & \delta^2<x<(1-g)\delta \\
\xi_0(x) & \textrm{\rm otherwise} \end{cases} .
\]
Let $\rho_0,\rho_{\delta}$ be the measures that are associated with
$\xi_0$ and $\xi_{\delta}$, respectively.
Then, as $\delta\to 0+$, we have that $\rho_{\delta}\to\rho_0$,
$\rho_{\delta}([0,\delta^2])\to 0$, and
\[
\rho_{\delta}(\{-g\delta\} ) \to g \rho_0(\{ 0\}), \quad\quad\quad
\rho_{\delta}(\{ (1-g)\delta \})\to (1-g)\rho_0(\{ 0\}) ,\\
\]
\end{Lemma}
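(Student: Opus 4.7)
The plan is to realize $H_\delta(z)$ as an explicit local modification of $H_0(z)$ near $z=0$. Since $\xi_\delta-\xi_0$ is supported on $(-g\delta,(1-g)\delta)$ and takes the values $1$, $-1/2$, $-1$ on the three natural subintervals, integrating term by term in \eqref{2.5} gives, after choosing the branches that make both sides holomorphic on $\C^+$ and equal to $1$ at infinity,
\[
\frac{H_\delta(z)}{H_0(z)} \;=\; \frac{z(z-\delta^2)}{(z+g\delta)(z-(1-g)\delta)}\,\sqrt{\frac{z}{z-\delta^2}}.
\]
Every assertion of the lemma is extracted from this one formula.

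The weak-$*$ convergence $\rho_\delta\to\rho_0$ is immediate: the right-hand side tends to $1$ locally uniformly on $\C^+$, so $H_\delta\to H_0$ locally uniformly on $\C^+$, and then $\rho_\delta\to\rho_0$ by standard Herglotz convergence (the same mechanism used at the end of the proof of Theorem \ref{T1.1}). For the two new point masses, $H_\delta$ has simple poles exactly at $-g\delta$ and $(1-g)\delta$ (the upward jumps of $\xi_\delta$), while $H_0$ is analytic there with $H_0(z)\sim -\rho_0(\{0\})/z$ near $0$. Computing $\lim_{z\to -g\delta}(z+g\delta)H_\delta(z)$ and $\lim_{z\to (1-g)\delta}(z-(1-g)\delta)H_\delta(z)$ from the ratio formula and letting $\delta\to 0$ yields $-\mathrm{Res}_{-g\delta}H_\delta = g\,\rho_0(\{0\})+o(1)$ and $-\mathrm{Res}_{(1-g)\delta}H_\delta = (1-g)\rho_0(\{0\})+o(1)$; since point masses of a Herglotz measure are the negatives of residues, this gives the last two claims.

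For $\rho_\delta([0,\delta^2])$, the point is that $\xi_\delta=1/2$ on this interval, so $d\rho_\delta=|H_\delta|/\pi\,dx$ there. Writing $H_0(z)=G(z)\cdot(B-z)/(-z)$, with $G$ the $\delta$-independent outer factor coming from $\xi_0$ on $(-R,R)\setminus(-A,B)$ (uniformly bounded near $0$), the ratio formula gives
\[
|H_\delta(x)| \;=\; |G(x)|\,\frac{(B-x)\sqrt{x(\delta^2-x)}}{(x+g\delta)((1-g)\delta-x)}
\]
for $x\in(0,\delta^2)$, and the crude bounds $\sqrt{x(\delta^2-x)}\le\delta^2/2$, $x+g\delta\ge g\delta$, and $(1-g)\delta-x\ge(1-g)\delta/2$ (for $\delta$ small) show that $|H_\delta(x)|$ is $O(1)$ uniformly in $\delta$ on $(0,\delta^2)$; integrating over an interval of length $\delta^2$ gives $\rho_\delta([0,\delta^2])=O(\delta^2)\to 0$. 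The only nontrivial issue in the whole argument is tracking the branches of $\log$ and $\sqrt{\ }$ in the derivation of the ratio formula, which is a routine bookkeeping matter entirely analogous to the handling in Lemma \ref{L3.2}.
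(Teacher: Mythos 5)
Your proof is correct and takes essentially the same route as the paper: both reduce the claims to an explicit evaluation of the $H$-function on $(-A,B)$ with a $\delta$-independent factor split off, and then a direct calculation for the new point masses (the paper uses $\rho(\{x\})=\lim_{y\to 0^+}y\,|H(x+iy)|$ while you use residues, which is the same computation). Your version is slightly cleaner in organizing everything around the single ratio formula $H_\delta/H_0$, and as a minor bonus it exposes a small typo in the paper's displayed formula, which should read $g^{1/2}(g+\delta)^{1/2}(B+g\delta)h(-g\delta)$ rather than $(g-\delta)^{1/2}$ — though this does not affect the limit.
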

So a point mass inside a gap (at $x=0$ here) can be split into two nearby point masses of
approximately the same total weight, with a ratio between the two that can be specified in advance,
by introducing an additional tiny band at the original point mass.
\begin{proof}
This is proved by an explicit calculation. First of all, we immediately obtain the
weak-$*$ convergence $\rho_{\delta}\to\rho_0$ from $\|\xi_{\delta}-\xi_0\|_1\to 0$.
Next, we can control $\rho_{\delta}([0,\delta^2])$ by a calculation similar to the one that was
used in the proof of Lemma \ref{L3.2}, so we will explicitly discuss only the point masses of
$\rho_{\delta}$ here. Observe also in this context that $\rho_{\delta}$ on $(-A,B)$ is supported
by $\{-g\delta\}\cup\{(1-g)\delta\}\cup (0,\delta^2)$.

We will use the formulae
\begin{align*}
\rho_{\delta}(\{-g\delta\}) & = \lim_{y\to 0+} y |H_{\delta}(-g\delta+iy)| ,\\
\rho_0(\{0\}) & = \lim_{y\to 0+} y |H_0(iy)| ,
\end{align*}
where, as usual, $H_{\delta},H_0$ are the $H$ functions of $\xi_{\delta}$ and $\xi_0$,
respectively, as in \eqref{2.5}. We rewrite \eqref{2.5} as
\begin{align*}
H_{\delta}(z) & = h(z) \exp \left( \int_{-A}^B \frac{\xi_{\delta}(t)\, dt}{t-z} \right) , \\
H_0(z) & = h(z) \exp \left( \int_{-A}^B \frac{\xi_0(t)\, dt}{t-z} \right) ;
\end{align*}
here, $h(z)$ is independent of $\delta$ and holomorphic in a neighborhood of $(-A,B)$.
Explicit calculation now shows that $\rho_0(\{ 0\})=Bh(0)$ and
\[
\rho_{\delta}(\{-g\delta\}) = g^{1/2} (g-\delta)^{1/2}(B+g\delta)h(-g\delta) ,
\]
which obviously converges to $gBh(0)$ as $\delta\to 0+$.

The calculation for $\rho_{\delta}(\{(1-g)\delta\})$ is of course analogous; alternatively,
we could combine the previous calculations with the fact that $\rho_{\delta}\to\rho_0$.
\end{proof}
Apply the Splitting Lemma to all point masses from \eqref{3.101},
with $x_j^{(n)}$ taking the role of $x=0$ in the Lemma,
and with $g=g_j^{(n)}=g(x_j^{(n)})$
and the $\delta$'s chosen so small that the statements below will be true.
More precisely, we only do this if $g_j^{(n)}\not= 0,1$; there is of course no need
to split the point mass if we already have $g_j^{(n)}=0$ or $1$.

For reasons that will become clear in a moment, we will also apply the Splitting Lemma
to the point masses of $\rho_n$ on $A_0\cap A_n^c$, with $g=1/2$.
We obtain new sets from the $A_n$'s, with additional, very small bands added.
We call these news sets $P_n$, and otherwise use tildes to refer to the new, modified data.
Again, we have that
\[
\chi_{A_0^c\cap P_n^c} \widetilde{\rho}_n = \sum_{j=1}^{\widetilde{N}_n}
\widetilde{w}_j^{(n)}\delta_{\widetilde{x}_j^{(n)}} ;
\]
the point is that we can now achieve that
\begin{equation}
\label{3.4}
\sum_{\widetilde{x}_j^{(n)}\notin A_0} \widetilde{\sigma}_j^{(n)}
\widetilde{w}_j^{(n)}\delta_{\widetilde{x}_j^{(n)}}\to g\chi_{A_0^c}\rho
\end{equation}
for suitably chosen $\widetilde{\sigma}_j^{(n)}\in\{ 0,1\}$. More precisely, for each fixed set
of indices $j,n$, we split
the old point mass at $x_j^{(n)}$ into two new point masses. One of these has weight approximately
equal to $\widetilde{w}=g(x_j^{(n)})w_j^{(n)}$, and we set
the corresponding $\widetilde{\sigma}=1$, and $\widetilde{\sigma}=0$ for the other new point mass of
this pair. This procedure makes sure that the point masses from \eqref{3.4} with
$\widetilde{\sigma}=1$ can be put
in one-to-one correspondence with those of $\chi_{A_0^c\cap A_n^c}\rho_n$, and the two corresponding
point masses $x_j^{(n)}$, $\widetilde{x}_j^{(n)}$
will get arbitrarily close to each other and the weights will satisfy
$\widetilde{w}\approx g(x)w$, up to an error that will
approach zero as we take the $\delta$'s from the Splitting Lemma closer and closer to zero.
So we can indeed make sure that \eqref{3.4} holds.

Similarly and as already announced above,
the Splitting Lemma with $g=1/2$ can be used on $A_0\setminus A_n$, and we then
obtain that (for suitable $\widetilde{\sigma}$; as before we can take one $\widetilde{\sigma}=1$
and the other equal to $0$ for each split pair)
\begin{equation}
\label{3.21}
\frac{1}{2}\widetilde{\rho}_{n,ac} + \sum_{\widetilde{x}_j^{(n)}\in A_0}
\widetilde{\sigma}_j^{(n)}\widetilde{w}_j^{(n)}\delta_{\widetilde{x}_j^{(n)}} \to
\frac{1}{2}\chi_{A_0}\rho .
\end{equation}
This follows because $\chi_{A_0}\rho_n\to\chi_{A_0}\rho$, as we saw above, and the
left-hand side of \eqref{3.21} is close to $(1/2)\chi_{A_0}\rho_n$. Let us explain
in more detail why this is true. In fact, it will only be true if the band sizes
$\delta$ are chosen small enough when we apply Lemma \ref{L3.2} and the Splitting Lemma.

Notice that while $\widetilde{\rho}_{n,ac}$ is supported
by $P_n$, only very little weight is given to $\widetilde{A}_n^c$ if the band sizes
$\delta$ were chosen small enough. The set $\widetilde{A}_n$ was introduced at the
beginning of this proof; it is equal to $P_n$, but with the bands removed.
So $\widetilde{\rho}_{n,ac}$ is close (in fact, in norm)
to its restriction to $\widetilde{A}_n$, and this in turn is close (again, in norm)
to $\chi_{A_n\cap A_0}\rho_n$ (no tilde!), because the main part of this measure also sits
on $\widetilde{A}_n$, which is
at some distance from the small set where we changed $\xi$ when going from
$\rho_n$ to $\widetilde{\rho}_n$. The part of $(1/2)\rho_n$ on $A_0\cap A_n^c$, on the other hand,
is approximated by the sum from \eqref{3.21}, by its construction.

By combining \eqref{3.21} with \eqref{3.4}, we obtain that
\[
\frac{1}{2}\widetilde{\rho}_{n,ac} + \sum
\widetilde{\sigma}_j^{(n)}\widetilde{w}_j^{(n)}\delta_{\widetilde{x}_j^{(n)}} \to
\frac{1}{2}\chi_{A_0}\rho + g\chi_{A_0^c}\rho .
\]
If we call the measure on the right-hand side $\widetilde{\nu}_+$,
then \eqref{3.3}, \eqref{3.2b}, \eqref{3.41} show that $\|\widetilde{\nu}_+-\nu_+\|<(5/2)\epsilon$.
Moreover, the measure on the left-hand side is a measure of the type
$\widetilde{\nu}_{n,+}$; it corresponds to some (unique) $J_n\in\RR_0(P_n)$.
We can also make sure that $|P_n\Delta B|<\epsilon$ here. Indeed, $P_n\supset B$,
and from the way $P_n$ was constructed, it is clear that we can make $|P_n\setminus B|$
arbitrarily small. Here it becomes again essential to add only very small bands during
the construction. Recall also that we perhaps replaced $B$ with a slightly smaller set
at the very beginning of the proof, so the originally given set $B$ is not necessarily
a subset of $P_n$, and we can really only make a claim about
$P_n\Delta B$.

By the material from Section 2, especially Propositions \ref{P2.5}, \ref{P2.2}, we know that the
Jacobi matrices $J_n\in\RR_0(P_n)$ that correspond to the spectral data we have constructed
will come as close to $J\in\RR(B)$ as we wish, provided $\epsilon>0$ was taken sufficiently
small and $n$ is large. This follows because, by construction, the spectral data
$(\xi_n,\nu_{n,+})$ of $J_n$ will come arbitrarily close to those of $J$.
At the same time, $|P_n\Delta B|$ can also be made arbitrarily small.

These operators $J_n\in\RR_0(P_n)$ are not necessarily periodic. However,
it is known \cite{Bogat,Pehers,Totik}
that an arbitrary finite gap set can
be transformed into a periodic set by arbitrarily small perturbations of the endpoints of its
intervals. Here, we call a finite gap set $P$ periodic if every (equivalently: one)
$J\in\RR_0(P)$ is periodic. Therefore, a final small adjustment of our sets $P_n$ will give the
full claim. We then need to know that for the new sets $\widetilde{P}_n$ we will
be able to find new Jacobi matrices $\widetilde{J}_n\in\RR_0(\widetilde{P}_n)$ that are close
to the original operators $J_n\in\RR_0(P_n)$. This issue will be discussed in great
detail later in this paper; here we only need a small part of these later
results. We can refer to
Theorem \ref{T4.6} and the discussion preceding it
to finish the present proof. This result applies here because for
a finite gap set $P$, it is definitely true that $\rho_s(P)=0$ for all $\rho\in\mathcal H(P)$.
\end{proof}
\section{The distance $\delta$}
In this section, we prove Theorem \ref{T1.4}(a). This will be an easier discussion
than the proof of part (b) of this Theorem and will thus serve as a good warm-up.
As a preliminary, we first confirm that $\delta$ is a metric.
\begin{Proposition}
\label{P5.1}
\eqref{defdelta} defines a metric on the non-empty compact subsets of $\R$.
\end{Proposition}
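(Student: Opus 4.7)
The plan is to verify each of the four metric axioms for $\delta$ by reducing them to known properties of its two summands. Non-negativity and symmetry are immediate: both $h(K,K')$ and $|K \Delta K'|$ are manifestly non-negative and symmetric in $K,K'$, so their sum inherits both properties. (Note also that $|K\Delta K'|<\infty$ since $K,K'$ are bounded.)

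For the triangle inequality, I would argue for the two summands separately. For the Hausdorff distance, this is classical: given a third compact set $K''$, any $x\in K$ and any $y\in K''$ can be connected through a point $z\in K'$ chosen to almost realize $d(x,K')$, giving $d(x,K'')\le d(x,K')+h(K',K'')$, and symmetrically for $y\in K''$, so $h(K,K'')\le h(K,K')+h(K',K'')$. For the symmetric difference, I would note the set-theoretic inclusion $K\Delta K''\subset (K\Delta K')\cup(K'\Delta K'')$ (a point lying in exactly one of $K,K''$ either lies in $K'$ or not, and accordingly belongs to $K'\Delta K''$ or $K\Delta K'$), and then take Lebesgue measure of both sides. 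Adding the two resulting inequalities gives the triangle inequality for $\delta$.

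Finally, for the identity of indiscernibles, suppose $\delta(K,K')=0$. Since both summands are non-negative, this forces $h(K,K')=0$. But $h$ is already known to be a metric on non-empty compact subsets of $\R$ (if $h(K,K')=0$, then every point of $K$ is a limit of points of $K'$, so $K\subset\overline{K'}=K'$ by compactness, and similarly $K'\subset K$). Hence $K=K'$. The converse ($K=K'$ implies $\delta(K,K')=0$) is trivial.

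No real obstacle arises here; the only point worth flagging is that the second summand $|K\Delta K'|$ by itself is only a pseudometric (two distinct compact sets can have symmetric difference of measure zero, e.g.\ a Cantor set and the empty set within it), so the identity of indiscernibles genuinely needs the Hausdorff summand, while conversely the triangle inequality uses nothing beyond the standard set-theoretic inclusion above. Thus $\delta$ inherits its metric structure as a sum of a metric and a pseudometric, both verified directly.
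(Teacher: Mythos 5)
Your proof is correct and takes the same route as the paper: both rely on the well-known fact that $h$ is a metric (which supplies non-degeneracy) and reduce the triangle inequality for the second summand to the set-theoretic inclusion $A\Delta C\subset(A\Delta B)\cup(B\Delta C)$. The paper simply states this more tersely; your added remark that $|K\Delta K'|$ alone is only a pseudometric is a correct and clarifying observation but not a departure in method.
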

\begin{proof}
It is of course well known that $h$ is a metric, and clearly $\delta$ is symmetric
and non-negative, so it suffices to show that $|K\Delta L|$ satisfies the triangle inequality.
This, however, follows immediately from the observation that
\[
A\Delta C \subset (A\Delta B) \cup (B\Delta C)
\]
for any three sets $A,B,C$.
\end{proof}
It will also be useful to keep in mind the following basic fact about Hausdorff
distance: If $A_n,A$ are non-empty, compact subsets of a compact metric
space $X$ and $h(A_n,A)\to 0$, then
\[
A = \{ x\in X: a_j\to x \textrm{ for some sequence }a_j\in A_{N(j)}, N(j)\to\infty \} ,
\]
and also
\[
A = \{ x\in X: a_n\to x \textrm{ for some sequence }a_n\in A_n \} .
\]
Compare, for example, \cite[Lemma 1.11.2]{vanMill}.
\begin{proof}[Proof of Theorem \ref{T1.4}(a)]
We will actually show that if we just know that the collections of
$\xi$ functions $X(K),X(K')$ are close in Hausdorff distance, that already forces
$\delta(K,K')$ to be small as well. So we can completely avoid all issues related to the
splitting $\rho=\nu_++\nu_-$ of the measures $\rho$.

Let $K_n,K$ be non-empty compact subsets of $[-R,R]$, so that $h(\Omega_n,\Omega)\to 0$.
Suppose, first of all, that it were not true that $h(K_n,K)\to 0$, say $h(K_n,K)\ge 2r >0$
(on a subsequence, which, for simplicity, we assume to be the original sequence).
We pass to another subsequence if necessary and then find ourselves in one of the
following two situations. Either: (i) There are $x_n\in K_n$ so that $(x_n-2r,x_n+2r)\cap
K = \emptyset$; or (ii) There are $x_n\in K$ so that $(x_n-2r,x_n+2r)\cap K_n = \emptyset$.
In both cases, we can make the $x_n$ converge to $x\in\R$ by passing to still another subsequence.

Case (i) can then be ruled out as follows: Let $\xi_n=1$ on all gaps (of $K_n$)
to the left of $x_n$ and $\xi_n=0$ on all gaps to the right of $x_n$, and, of course,
$\xi_n=1/2$ on $K_n$. Then $\xi_n\in X(K_n)$ and thus there are $J_n\in\Omega_n$
that have these functions as their $\xi$ functions,
but we claim that no accumulation point of the $\xi_n$ lies in $X(K)$.
This is a contradiction because we can pass to a convergent subsequence so that
$J_n\to J$. Since $h(\Omega_n,\Omega)\to 0$, the limit must satisfy $J\in\Omega$.
By Proposition \ref{P2.5}, this implies that $\xi_n\, dt\to\xi\, dt$,
where $\xi\in X(K)$ is the $\xi$ function of $J$.

Since $(x-r,x+r)\cap K=\emptyset$, an arbitrary $\xi\in X(K)$
can only take the values $0$ and $1$ on this interval, and if both values occur,
then $\xi$ has to jump from $0$ to $1$ at some point $\mu\in [x-r,x+r]$.
Suppose that we had $\xi_{n_j}\, dt \to \xi\, dt$ in weak-$*$ sense for such a $\xi$
and a subsequence of the sequence $\xi_n$ that was defined above. Now $\xi_n\ge 1/2$
on $(x-r,x-\delta)$ for all large $n$ for arbitrary $\delta>0$, so by testing against
functions $f\in C(\R)$ that are supported by $[x-r,x]$, equal to $1$ on some subinterval
and take values between $0$ and $1$, we see that we must have $\mu=x-r$, that is,
$\xi=1$ on $[x-r,x+r]$. This, however, leads to a contradiction when we test against
similar functions that are supported by $[x,x+r]$.
We have to admit that the accumulation points of the sequence $\xi_n$ are not $\xi$ functions
of operators $J\in\Omega$, but, as explained above,
this contradicts our hypothesis that $h(\Omega_n,\Omega)\to 0$.

Case (ii) is handled similarly. As observed earlier, we can in fact assume that there is
an $x\in K$ so that $(x-r,x+r)\cap K_n=\emptyset$ for all large $n$.
This time, we use the procedure from above
to define a $\xi\in X(K)$: Put
$\xi=1$ on all gaps of $K$ to the left of $x$, $\xi=0$ on the gaps to the right of $x$,
and $\xi=1/2$ on $K$. We can now argue as above to show that this $\xi$ can not be the limit of
a (sub-)sequence of $\xi$ functions $\xi_n\in X(K_n)$, basically, as before, because
these functions can only jump from $0$ to $1$ on $(x-r,x+r)$, not from $1$ to $0$.
Again, it would follow that $h(\Omega_n,\Omega)\not\to 0$, which is a contradiction.
So we have now shown that $h(K_n,K)\to 0$. Note that this in particular implies that
$\min K_n\to\min K$, $\max K_n\to\max K$.

It remains to show that $|K_n\Delta K|\to 0$ as well.
We first observe that we must definitely have
that $|K_n|\to |K|$. Indeed, if this were false, say $|K_n|\le |K|-\epsilon$ on a subsequence,
then the sequence of functions $\xi_n\in X(K_n)$ that are equal to $0$ on all gaps
cannot have a limit point
$\xi\in X(K)$. If, on the other hand, we had that $|K_n|\ge |K| + \epsilon$,
then the function $\xi\in X(K)$ that is equal to $0$ on all gaps cannot be reached as a limit
of any sequence $\xi_n\in X(K_n)$, so again we obtain a contradiction.

This argument also shows that, more generally,
\[
|K_n\cap I|\to |K\cap I|
\]
for every fixed interval $I\subset\R$. To establish this version, just argue as above, but test against
functions that are close to $\chi_I$.

In particular, $I$ can be any gap of $K$ here, and it then follows that
$|K_n\cap I|\to 0$. Since for any $\epsilon>0$, we can find finitely many gaps
so that the total measure of the remaining gaps is $<\epsilon$, this implies that
\[
\lim_{n\to\infty} \left| K_n\setminus K\right| = 0 .
\]
Since
\[
\left| K\setminus K_n\right| = \left| K\right|-\left| K_n\right|+\left|
K_n\setminus K \right|
\]
and, as pointed out at the beginning of this argument, $|K_n|\to |K|$,
we also obtain that $|K\setminus K_n|\to 0$.
\end{proof}
\section{The spaces $\RR_0(K)$}
We will first discuss how Theorem \ref{T1.4}(b) follows from its variant Theorem \ref{T1.5}
and then prove this statement.
Suppose that what Theorem \ref{T1.4}(b) asserts were not true.
Then there are compact sets $K_n,K$ with $\delta(K_n,K)\to 0$, but $h(\Omega_n,\Omega)\ge\epsilon>0$.
So, on a subsequence (which, for notational simplicity, we assume to be the original sequence),
one of the following alternatives will hold: (i) There are $J_n\in\Omega_n$ so that
$d(J_n,\Omega)\ge\epsilon$; (ii) There are $J_n\in\Omega$ so that $d(J_n,\Omega_n)\ge\epsilon$.
By compactness, the $J_n$ will approach a limit $J$ on a subsequence. In case (i), Proposition
\ref{P3.1} forces $J\in\Omega$, an obvious contradiction. This rules out case (i),
and we have also inadvertently established Theorem \ref{T1.5}(a).

So it just remains to discuss case (ii), and we can actually restrict our attention to the
slightly simpler scenario:\\[0.2cm]
\textit{(ii') There exists $J\in\Omega$ so that}
\begin{equation}
\label{4.86}
\limsup_{n\to\infty} d(J,\Omega_n) > 0 .
\end{equation}
Theorem \ref{T1.5}(b) claims that this can not happen if $\rho_{sc}(K)=0$, where,
as usual, $\rho$ denotes the measure of the $H$ function of $J$, as in \eqref{2.6}.
So Theorem \ref{T1.4}(b) will follow if we can prove Theorem \ref{T1.5}(b).

We begin our discussion with the special case when $\rho_s(K)=0$.
\begin{Theorem}
\label{T4.6}
Suppose that $\delta(K_n,K)\to 0$, and suppose further that
$J\in\Omega$ with $\rho_s(K)=0$. Then \eqref{4.86} cannot hold: $d(J,\Omega_n)\to 0$.
\end{Theorem}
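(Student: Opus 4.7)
The plan is to construct, for each large $n$, an operator $J_n\in\RR_0(K_n)$ whose spectral data $(\xi^{(n)},\nu^{(n)}_+)$ approximate $(\xi,\nu_+)$ of $J$ in weak-$*$ sense, and then invoke Proposition~\ref{P2.5} together with the compactness of $\J_R$ to conclude $d(J_n,J)\to 0$, hence $d(J,\Omega_n)\to 0$. Since $\rho_s(K)=0$, Proposition~\ref{P2.4} parametrizes $J$ by just the gap data $(\mu_j,\sigma_j)$. We take each $J_n$ with $\rho^{(n)}_s(K_n)=0$, so that $J_n$ is likewise described by gap data $(\mu^{(n)}_j,\sigma^{(n)}_j)$, reducing the task to a matching problem between the gaps of $K$ and those of $K_n$.

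Fix $\epsilon>0$ and select finitely many ``principal'' gaps $(a_1,b_1),\dots,(a_M,b_M)$ of $K$ carrying all but $\epsilon$ of the total $\rho$-weight, with the remaining gaps having total Lebesgue measure $<\epsilon$; this is possible because $\rho$ is finite. Since $h(K_n,K)\to 0$, for all large $n$ each principal gap is contained in a unique gap $(a^{(n)}_k,b^{(n)}_k)$ of $K_n$, and moreover $a^{(n)}_k\to a_k$, $b^{(n)}_k\to b_k$. In each matched gap take $\mu^{(n)}_k$ to be the point of $[a^{(n)}_k,b^{(n)}_k]$ closest to $\mu_k$ (so $\mu^{(n)}_k\to\mu_k$) and set $\sigma^{(n)}_k=\sigma_k$. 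The remaining (``stray'') gaps of $K_n$ lie in the discarded small gaps of $K$ or inside $K\setminus K_n$, so their total Lebesgue measure is at most $\epsilon+|K\setminus K_n|$; assign $\mu^{(n)}$ and $\sigma^{(n)}$ there arbitrarily.

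We next verify weak-$*$ convergence of the spectral data. For $\xi^{(n)}\,dt\to\xi\,dt$: both functions equal $1/2$ on $K\cap K_n$; they can differ only on $K\Delta K_n$ (whose Lebesgue measure tends to $0$), on matched principal gaps (where the jumps occur at nearby points, with shift controlled by $|a^{(n)}_k-a_k|+|b^{(n)}_k-b_k|\to 0$), and on stray gaps in $K^c$ (whose total length is $O(\epsilon)$). Formula \eqref{2.5} then gives $H^{(n)}\to H$ locally uniformly on $\C^+$, hence $\rho^{(n)}\to\rho$ weak-$*$. Decomposing $\nu^{(n)}_+=\tfrac12\chi_{K_n}\rho^{(n)}+\sum_j\sigma^{(n)}_jw^{(n)}_j\delta_{\mu^{(n)}_j}$, the absolutely continuous part converges to $\tfrac12\chi_KF\,dt$ using $\rho_s(K)=0$; each principal point mass $w^{(n)}_k\delta_{\mu^{(n)}_k}$ converges to $w_k\delta_{\mu_k}$ by testing $\rho^{(n)}\to\rho$ against a bump function isolating $\mu_k$; and the aggregate weight of stray point masses is controlled by their total length via the explicit formula analogous to Lemma~\ref{L3.2}, hence is $O(\epsilon)$ in the limit. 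Combined with $\sigma^{(n)}_k=\sigma_k$, this yields $\nu^{(n)}_+\to\nu_+$ weak-$*$. Since $\nu_+$ has infinite support, any $\J_R$-subsequential limit of the $J_n$ has spectral data $(\xi,\nu_+)$ by Proposition~\ref{P2.5} and hence equals $J$; compactness of $\J_R$ finishes the proof, after letting $\epsilon\to 0$ along a diagonal subsequence.

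The main technical obstacle lies in simultaneously controlling the principal point masses in the endpoint cases (where $\mu_k\in\{a_k,b_k\}$ and $w_k=0$, so we must show $w^{(n)}_k\to 0$ even though $\mu^{(n)}_k$ may lie strictly inside its gap) and bounding the aggregate weight of the possibly infinitely many stray point masses. Both are managed by the explicit weight formulas as in Lemma~\ref{L3.2}, which force small weights when the jump sits near an endpoint of its gap or when the gap itself is small.
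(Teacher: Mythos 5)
Your overall scaffolding (match gaps, make $\xi_n\,dt\to\xi\,dt$ and $\nu_{n,+}\to\nu_+$, invoke the homeomorphism between spectral data and operators, and a compactness argument) is exactly the route the paper takes, and the matching of ``principal'' gaps with $\sigma^{(n)}_k=\sigma_k$ is fine. The genuine gap is in your treatment of the stray point masses, where you assign the parameters $\sigma^{(n)}$ \emph{arbitrarily} and then assert that ``the aggregate weight of stray point masses is controlled by their total length via the explicit formula analogous to Lemma~\ref{L3.2}, hence is $O(\epsilon)$.'' This step is unsupported, and the paper does something quite different precisely because it is not expected to hold.

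There are two problems. First, Lemma~\ref{L3.2} estimates the $\rho$-measure of a thin \emph{band} where $\xi=1/2$ is squeezed between a $\xi=1$ region and a $\xi=0$ region; a stray gap $(a,b)$ of $K_n$ with a jump of $\xi$ from $0$ to $1$ at an interior $\mu$ is the opposite configuration, which is exactly what \emph{produces} a point mass. The weight of such a point mass is $\rho(\{\mu\})=\lim y|H(\mu+iy)|$, and from \eqref{2.5} one sees it carries a factor of roughly $(b-\mu)\exp\!\big(\int_{(a,b)^c}\xi(t)\,dt/(t-\mu)\big)$; the exponential factor depends on the global structure of $\xi$ and need not be uniformly bounded as the gaps become small and numerous, so short stray gaps do \emph{not} automatically have small point-mass weight, and the total stray weight need not be $O(\epsilon)$. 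Second, even granting that each individual stray weight is small, with an \emph{arbitrary} choice of $\sigma^{(n)}$ the stray contribution to $\nu_{n,+}(I)$ is of size $\tfrac12\sum_{\mathrm{stray},\,\mu_j^{(n)}\in I}w_j^{(n)}$, which only goes to zero if the \emph{aggregate} is small. This is precisely what you cannot assume.

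What the paper actually proves is the much weaker (but sufficient) fact \eqref{4.37}: the \emph{individual} stray weights $w_j^{(n)}$ tend to zero uniformly outside a slowly-growing finite index set, established by a contradiction argument that uses $\rho_n\to\rho$ weak-$*$ together with $\rho_s(K)=0$ and the fixed labeling scheme for the gaps (this labeling is essential to the argument and not cosmetic). Then, rather than estimating the aggregate, it chooses the stray $\sigma_j^{(n)}$ \emph{adaptively}: the quantity $\nu_n^{(2)}(I)=\sum_{\mathrm{stray}}(\sigma_j^{(n)}-\tfrac12)w_j^{(n)}\chi_I(\mu_j^{(n)})$ ranges from a nonnegative value (all $\sigma=1$) to a nonpositive value (all $\sigma=0$), and by \eqref{4.37} one can walk between these extremes in steps of vanishing size, so it can be brought as close to zero as desired. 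An arbitrary choice of $\sigma^{(n)}$ does not accomplish this. A smaller remark on the endpoint case: you set $\mu^{(n)}_k$ to the point of $[a^{(n)}_k,b^{(n)}_k]$ closest to $\mu_k$, which can land strictly inside the $K_n$-gap and create a spurious point mass; the paper instead puts $\mu_j^{(n)}$ at the gap endpoint $a_j^{(n)}$ (or $b_j^{(n)}$), so no point mass appears. Your version can be salvaged (use $\rho(\{a_k\})=0$ and the portmanteau inequality for closed sets to force $w^{(n)}_k\to 0$), but again ``explicit weight formulas'' by themselves are not the right justification.
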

\begin{proof}
We know that
\begin{align}
\label{4.33}
\rho & = \rho_{ac} + \sum w_j\delta_{\mu_j}, \\
\label{4.33a}
\nu_+ & = \frac{1}{2}\, \rho_{ac} + \sum \sigma_j w_j\delta_{\mu_j} ,
\end{align}
and our goal is to find $J_n\in\Omega_n$ so that $\xi_n\, dx\to\xi\, dx$ and $\nu_{n,+}\to\nu_+$ in
the weak-$*$ topology. The sums in \eqref{4.33}, \eqref{4.33a} are taken over those
gaps $(a_j,b_j)$ for which $\mu_j\not=a_j, b_j$; in other words, these sums give us
the singular parts of $\rho$, $\nu_+$ on $K^c$ (which is the complete singular part
here, by assumption).

We start by labeling the gaps of $K$ once and for all by integers $j\ge 1$, for example in order
of decreasing size. For each gap
$(a_j,b_j)\subset K^c$ of $K$ and sufficiently large $n$,
there has to be a corresponding gap $(a_j^{(n)},b_j^{(n)})\subset K^c_n$ that converges to $(a_j,b_j)$
in the sense that $a_j^{(n)}\to a_j$, $b_j^{(n)}\to b_j$. We use this fact to introduce integers
$N(j)$, for $j\ge 1$, as follows: If $\mu_j\not= a_j,b_j$, then we define $N(j)$ as the smallest index
for which every $K_n$ for $n\ge N(j)$
has a gap $(A,B)$ that satisfies
\[
\left| A - a_j \right| + \left| B-b_j\right| < \frac{1}{10}\, \min\{b_j-\mu_j, \mu_j-a_j\} ,
\]
say. If $\mu_j=a_j$ or $b_j$, we proceed similarly, but replace the minimum with $b_j-a_j$ here.

Note that if $n\ge N(j)$, then the gap $(A,B)$ of $K_n$ that is close
to $(a_j,b_j)\subset K^c$ in this sense is unique,
so we can label the gaps of $K_n$ so that this gap $(A,B)$ (where $A,B$ depend on $n$) also
gets the label $j$, for $n\ge N(j)$. We do not impose any conditions on the labels of the
remaining gaps of $K_n$, other than the obvious requirement that no label can be used more than once
for fixed $n$. It could actually happen here that when applying this procedure,  we run out of
labels $j\ge 1$. More precisely, this happens if there is a finite set $M\subset\N$
so that $N(j)$ is bounded on $\N\setminus M$, but $K_n$ for $n\ge \sup_{j\notin M} N(j)$ has
more than $|M|$ additional gaps.
In this case, we just invent new labels; for example, we could use negative integers.

Each $\mu_j$ from \eqref{4.33} is in the interior
of its gap, and we can put $\mu_j^{(n)}=\mu_j$, at least for $n\ge N(j)$.
If $n\ge N(j)$ and $\mu_j=a_j$, then we put $\mu_j^{(n)}=a_j^{(n)}$; the analogous
procedure is used if $\mu_j=b_j$ and $n\ge N(j)$. Finally,
on the remaining gaps of $K_n$, we can put $\xi_n=0$, say.

The assumption that $\delta (K_n,K)\to 0$ then makes sure that $\xi_n$ differs
from $\xi$ only on a small set. In particular, $\|\xi_n-\xi\|_1\to 0$, and, as usual, this implies
that $\rho_n\to\rho$ in weak-$*$ sense. We rewrite \eqref{4.33a} as
\[
\nu_+ = \frac{1}{2}\rho + \sum \left( \sigma_j-\frac{1}{2} \right) w_j\delta_{\mu_j} ,
\]
and our goal is to find parameters $\sigma_j^{(n)}$, $g_n(x)$ so that the corresponding measures
\begin{equation}
\label{4.36}
\nu_{n,+} = \frac{1}{2}\rho_n + \left( g_n - \frac{1}{2} \right)\chi_{K_n} \rho_{n,s} +
\sum\left( \sigma_j^{(n)}-\frac{1}{2} \right) w_j^{(n)}\delta_{\mu_j^{(n)}}
\end{equation}
approach $\nu_+$. As will become clear from the argument we are about to give,
the existence of a singular part of $\rho_n$ on $K_n$ would only make this task easier
because the functions $g_n$, $0\le g_n\le 1$, are completely at our disposal.
In fact, we could just set $g_n(x)\equiv 1/2$ here right away and then proceed
as outlined below.
We will therefore only discuss the case where $\rho_{n,s}(K_n)=0$ explicitly.

We will establish the desired weak-$*$ convergence $\nu_{n,+}\to\nu_+$
as follows: We will verify that
given $\epsilon>0$ and
finitely many intervals $I_1,\ldots, I_N\subset\R$ with $\rho(\partial I_j)=0$,
there exists $n_0\in\N$ so that for all $n\ge n_0$, it is possible to
assign values to the $\sigma_j^{(n)}$ so that
\begin{equation}
\label{4.38}
\left| \nu_{n,+}(I_j)-\nu_+(I_j)\right| < \epsilon .
\end{equation}
We can assume that the
$I_j$ are disjoint, and we can then focus on a single interval $I$ because
$\nu_{n,+}(I)$ obviously only depends on those $\sigma_j^{(n)}$ for which $\mu_j^{(n)}\in I$.

So fix such an interval $I$. We make the obvious first step in our attempts to choose the
parameters $\sigma_j^{(n)}$ appropriately: we put $\sigma_j^{(n)}=\sigma_j$ if $n\ge N(j)$.
To find suitable values for the remaining $\sigma$'s, we make the following preliminary
observation: If $N_n\in\N$, $N_n\to\infty$, then
\begin{equation}
\label{4.37}
\lim_{n\to\infty} \sup_{j\notin\{ 1,\ldots, N_n \} } w_j^{(n)} = 0 .
\end{equation}
To prove this, we argue by contradiction. Suppose \eqref{4.37} were wrong. Then there are
$\mu_{j_n}^{(n)}$ with arbitrarily large $n$ so that (on this subsequence, which we won't make
explicit in the notation, as always) eventually $j_n\notin \{ 1,\ldots, N \}$ for every $N\ge 1$ and
\[
w_{j_n}^{(n)}=\rho_n(\{ \mu_{j_n}^{(n)} \} ) \ge \epsilon > 0 .
\]
Here, we may also assume that $\mu_{j_n}^{(n)}\to x$. But then the weak-$*$ convergence
$\rho_n\to\rho$ implies that $\rho(\{ x\} )>0$. Since $\rho_s(K)=0$ by hypothesis, this forces
$x$ to lie in some gap $(a_j,b_j)\subset K^c$, but then only $\mu_j^{(n)}$ can be close to
$x$ for large $n$, so in particular it is not possible to have indices $j_n\notin\{ 1,\ldots, j\}$.

We now split $\nu_{n,+} = \nu_n^{(1)} + \nu_n^{(2)}$, where
\[
\nu_n^{(2)} = \sum_{j\notin \{ 1,\ldots, N_n\} }
\left( \sigma_j^{(n)}-\frac{1}{2} \right) w_j^{(n)}\delta_{\mu_j^{(n)}} .
\]
Here, we take cut-offs $N_n\in\N_0$ that satisfy $N_n\to\infty$ but increase so slowly that
$N(j)\le n$ if $1\le j\le N_n$, and
\[
\lim_{n\to\infty} \sum_{\substack{j=1,\ldots,N_n\\\mu_j\not= a_j, b_j}}
\left| w_j^{(n)} - w_j \right| = 0 .
\]
This is possible because $w_j^{(n)}\to w_j$ for fixed $j$, and this latter statement
just follows from the convergence $\rho_n\to\rho$ together with the fact that if
$\mu_j\not= a_j, b_j$, then
$\rho_n$ is supported by $\{\mu_j \}$ in a neighborhood of $\mu_j=\mu_j^{(n)}$ for all large $n$.

Notice that
\[
2\left| \nu_n^{(1)}(I) - \nu_+(I) \right| \le \left| \rho_n(I)-\rho(I) \right|
+ \sum_{\substack{j=1,\ldots,N_n\\\mu_j\not= a_j, b_j}}
\left| w_j^{(n)} - w_j \right| + \sum_{j>N_n} w_j ,
\]
and the right-hand side approaches zero here.
So now our task is to show that
for large $n$, the remaining $\sigma_j^{(n)}$ can be chosen so that $|\nu_n^{(2)}(I)|$ becomes small.
This, however, follows immediately from \eqref{4.37}: We have $\nu_n^{(2)}(I)\ge 0$ if we take
all the $\sigma$'s equal to $1$ and $\nu_n^{(2)}(I)\le 0$ if we set them all equal to $0$, and
\eqref{4.37} says that we can go from one extreme value to the other in very small steps
by changing individual $\sigma$'s, so we will be able to come close to zero
and thus obtain \eqref{4.38} for all large $n$.
\end{proof}

Given this, Theorem \ref{T4.1} will now indeed imply Theorem \ref{T1.5}(b) because if $J\in\RR_0(K)$
with $\rho_{sc}(K)=0$ and $\epsilon>0$ are given, then
we can first use Theorem \ref{T4.1} to find a $J'\in\RR_0(K)$ with $\rho'_s(K)=0$ and $d(J,J')<\epsilon$
and then Theorem \ref{T4.6} says that $d(J',\Omega_n)\to 0$,
so $\limsup d(J,\Omega_n)< \epsilon$.
\begin{proof}[Proof of Theorem \ref{T4.1}]
Our assumption says that
\begin{align}
\label{4.9a}
\rho & = \rho_{ac} + \sum v_j \delta_{x_j} + \sum w_j\delta_{\mu_j}, \\
\label{4.9b}
\nu_+ & = \frac{1}{2}\,\rho_{ac} + \sum g_jv_j\delta_{x_j} + \sum \sigma_j w_j\delta_{\mu_j} ,
\end{align}
where $x_j\in K$, $\mu_j\in (a_j,b_j)\subset K^c$, and $0\le g_j\le 1$, $\sigma_j=0, 1$.
We will first show that given $\epsilon>0$, we can find $J'\in\RR_0(K)$ so that
$d(J,J')<\epsilon$ and $\rho'_s(K)<\epsilon$. This will be done by removing
sufficiently many of the point masses $\delta_{x_j}$ by modifying
$\xi$ on small neighborhoods of the $x_j$.

To keep the argument transparent and for notational convenience, we will start with
the special case where we remove just one point mass, say $v_1\delta_{x_1}$. We also
assume that $x_1=0$.

We will make use of Proposition \ref{P2.4}. So we will try to construct new
data $\xi',\nu'_+$ so that $\nu'_+$ is close to $\nu_+$ in the weak-$*$ topology,
and $\rho'_s(K\cap (-r,r))=0$ for some $r>0$ (so the point
mass at $x_1=0$ has been removed). We will
handle the weak-$*$ topology in the same way as in the proof of Theorem \ref{T4.6}:
We assume that we are given $\epsilon>0$ and disjoint open
intervals $I_1,\ldots, I_N$ whose endpoints
are not point masses of $\rho$, and our task is to achieve that
\begin{equation}
\label{estnuplus}
\left| \nu'_+(I) - \nu_+(I) \right| < \epsilon ,
\end{equation}
for these intervals
and, of course, $\rho'_s(K\cap (-r,r))=0$ for some $r>0$. As will become clear later on,
we will obtain \eqref{estnuplus} quite easily for those intervals that are at some distance
from $x=0$. So we'll focus on the interval that contains $0$; call this interval $I$.

With these preliminaries out of the way, we are now ready for the main part
of the proof. We will first focus on the situation where gaps accumulate at $0$ from both sides;
equivalently, $0\in K$ is not an endpoint of a gap. The easier alternative cases will be discussed later.

We choose $A,B>0$ so small that $(-A,B)\subset I$,
\begin{equation}
\label{4.7}
\rho((-A,B)\setminus\{ 0\})<\epsilon ,
\end{equation}
and
\begin{equation}
\label{4.6}
\int_{(-A,B)} \frac{|\xi(t)-\chi_{(0,\infty)}(t)|}{|t|}\, dt < \epsilon .
\end{equation}
This can be done because we know that the integral from \eqref{4.6},
extended over $(-1,1)$, say, is finite.
See again \cite[pg.\ 201]{MP} or \cite[Lemma 2.4]{PolRem}.
It will also be convenient to choose $-A$ as the left endpoint of a gap and, similarly,
$B$ as the right endpoint of a (different) gap.

Notice that since $\xi=1/2$ on $K$,
\eqref{4.6} in particular shows that
\begin{equation}
\label{4.1}
\int_{(-A,B)\cap K} \frac{dt}{|t|} < 2\epsilon .
\end{equation}
Now for $0<d<A,B$ (typically $d\ll A,B$), we define
a new $\xi$ function $\xi_d\in X(K)$ as follows:
Let $\xi_d=\xi$ on $(-A,B)^c$.
If $(a,b)$ is a gap of $K$ that is contained in $(-d,d)$, jump from $0$ to $1$ at the center of this
gap; in other words, $\mu=(a+b)/2$. If $d\in(a,b)$ for some gap $(a,b)\subset K^c$,
proceed similarly, but now only the part of $(a,b)$ inside $(-d,d)$ counts: put $\mu=(a+d)/2$.
Use the same procedure at $-d$. Finally, put $\xi_d=1$ on
$(d,B)\setminus K$ and $\xi_d=0$ on $(-A,-d)\setminus K$ and $\xi_d=1/2$ on $K$.

Then $\xi_d\in X(K)$, and, as usual, $\xi_d$
defines a measure $\rho_d$ via \eqref{2.5}, \eqref{2.6}. We claim that
\begin{equation}
\label{estrhoac}
\rho_{d,ac}((-A,B)) \le C\epsilon^{1/2} ,
\end{equation}
for some constant $C>0$. Here and in the remainder of this proof, by a \textit{constant }we mean
a number that is independent of $\epsilon,d,A,B$. It may depend on the set $K$ and the measure $\rho$.
We will also apply the usual convention that the value of a constant may change from one expression
to the next, even though we use the same symbol $C$ for these different constants.

The absolutely continuous part of $\rho_d$ is supported by $K$, and since $\xi_d=1/2$
on $K$, its density is given by $(1/\pi)\textrm{Im }H_d(x) = (1/\pi)|H_d(x)|$.
By \eqref{2.5}, this is equal to
\[
\frac{x+R}{\pi}\exp \left( \lim_{y\to 0+}
\int_{-R}^R\frac{t-x}{(t-x)^2+y^2}\, \xi_d(t)\, dt \right) .
\]
For almost every $x\in\R$, the limit in the exponent (exists and) is equal to the Hilbert transform
\[
(T\xi_d)(x) = \lim_{y\to 0+} \int_{|t-x|>y} \frac{\xi_d(t)\, dt}{t-x} .
\]
Here, the integral is only extended over $t\in (-R,R)$; we will use similar conventions
throughout this paper whenever integrals of $\xi$ functions are involved.

Observe that for almost every $x\in K$, $(T\xi_d)(x)$ goes up if we replace $\xi_d$
by the constant function $1/2$ on $(-d,d)$.
To show this, consider the effect of replacing $\xi_d$ by the constant $1/2$ on a single gap.
It is then obvious from the definition of $(T\xi_d)(x)$ that this quantity goes up at all $x$
not from this gap. To handle the limiting process that is involved here when
we replace $\xi_d$ by $1/2$ on infinitely many
gaps, it suffices to recall that $T$ is a continuous (in fact, unitary) map on $L^2(\R)$.

Similarly, for $-d<x<d$, the Hilbert transform will also go up if we replace $\xi_d$ by $1$
on $(d,R)$ and by $0$ on $(-R,-d)$. For this modified function (call it $\zeta$)
and $x\in (-d,d)$, we have that
\[
(T\zeta)(x) =\lim_{y\to 0+} \frac{1}{2} \int_{(-d,d)\cap \{ |t-x|>y\} } \frac{dt}{t-x}
+ \int_{d}^R \frac{dt}{t-x} ,
\]
and from this we obtain that the density of $\rho_{d,ac}$ satisfies
\[
\frac{d\rho_d}{dx} \le \frac{R^2-x^2}{\pi \sqrt{(d-x)(x+d)}}
\]
on $x\in (-d,d)$. Hence
\begin{equation}
\label{4.2}
\rho_{d,ac}((-d,d)) \le C \int_{K\cap (-d,d)} \frac{dx}{\sqrt{d^2-x^2}} .
\end{equation}
Now \eqref{4.1} clearly implies that $|K\cap (-d,d)| < 2d\epsilon$, and
the integrand from \eqref{4.2} becomes largest close to the endpoints $x=\pm d$,
so we can further estimate \eqref{4.2} as follows:
\[
\rho_{d,ac}((-d,d)) \le C \int_{(1-\epsilon)d}^d \frac{dx}{\sqrt{d^2-x^2}} = C
\int_{1-\epsilon}^1 \frac{ds}{\sqrt{1-s^2}} \le C\epsilon^{1/2} .
\]
This establishes part of \eqref{estrhoac}. Let us now take a look at $\rho_{d,ac}((d,B))$.
As before, we can replace $\xi_d$ by $1/2$ on $(-d,d)$, set it equal to $0$ to the left of
$-d$ and equal to $1$ to the right of $B$, and the density of $\rho_{d,ac}$ will only
go up on $(d,B)$ because the Hilbert transform has this monotonicity property.
Call this modified Krein function $\zeta$, and let $\mu$ be the associated
measure. We will now estimate $\mu_{ac}((d,B))$ by introducing a final modification. We
subdivide $I=(d,B)$ into intervals $I_n=(q^nd,q^{n+1} d)$, where $n=0,1,\ldots , N$ and
$2\le q\le 4$. Let
$s_n=|K\cap I_n|$ and
\[
\zeta_1(x) = \begin{cases} 1/2 & q^n d < x <q^n d+s_n \\
1 & q^n d + s_n < x < q^{n+1}d \\
\zeta(x) & x\notin (d,B)
\end{cases} .
\]
Recall that $\zeta=1$ on $(d,B)\setminus K$ and $\zeta=1/2$ on $K$. This new function
$\zeta_1$ is a function of the same type, but we shifted the part of $K$
inside $I_n$ to the very left of this interval.
We first claim that $\mu(I^c)$ can only decrease under
this change. Again, this follows by comparing Hilbert transforms: it is clear
that $(T\zeta_1)(x)\le (T\zeta)(x)$ if $x\notin I$ (and both limits exist).
This implies that $\mu_{1,ac}(I^c)\le \mu_{ac}(I^c)$, by comparing densities, as
explained above. To obtain the same conclusion for the singular parts, we make
use of the formula
\[
d\rho_s = \lim_{\lambda\to\infty} \frac{\pi}{2}\lambda\chi_{\{ |H|>\lambda\} }(x)\, dx .
\]
See \cite[Theorem 1]{Pol2} and also \cite[Section 9.7]{CMT}
(for the disk version of this statement).

On the other hand, $\|\zeta-\zeta_1\|_1 < B$, so the following Lemma
will make sure that $|\mu(\R)-\mu_1(\R)|<\epsilon$, if $B>0$ was chosen small enough initially.
\begin{Lemma}
\label{L4.1}
Fix $R>0$. Then,
for any $\epsilon>0$, there exists $\delta>0$ so that
\[
|\rho_1(\R)-\rho_2(\R)|<\epsilon
\]
if $\rho_1,\rho_2$ are the measures of two $H$ functions, as in \eqref{2.5}, \eqref{2.6},
whose $\xi$ functions satisfy $\|\xi_1-\xi_2\|_{L^1(-R,R)} < \delta$.
\end{Lemma}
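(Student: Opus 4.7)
My plan is to read off $\rho_j(\R)$ from the leading asymptotics of $H_j(z)$ at $z=\infty$, thereby obtaining an explicit formula for $\rho_j(\R)$ in terms of low-order moments of $\xi_j$. Once such a formula is in hand, the Lemma follows by inspection, because these moments depend continuously (in fact, Lipschitz continuously) on $\xi_j$ in $L^1(-R,R)$.

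First, from the Herglotz representation \eqref{2.6}, expanding the Cauchy kernel geometrically for $|z|$ large gives
\[
H_j(z) = z + A_j - \frac{\rho_j(\R)}{z} + O(|z|^{-2}),
\]
so $\rho_j(\R)$ is (minus) the coefficient of $1/z$ in the Laurent expansion at $\infty$. On the other hand, the defining formula \eqref{2.5} also admits a Laurent expansion at $\infty$: expanding $1/(t-z)=-\sum_{k\ge 0}t^k z^{-k-1}$ inside the integral, exponentiating, and multiplying by $(z+R)$, one keeps only the $z^0$ and $z^{-1}$ terms. Writing $s_j := \int_{-R}^{R}\xi_j(t)\,dt$ and $m_j := \int_{-R}^{R} t\,\xi_j(t)\,dt$, a short computation recovers the already-noted identity $A_j = R - s_j$ at order $z^0$, and at order $z^{-1}$ yields the closed form
\[
\rho_j(\R) = R s_j + m_j - \tfrac{1}{2}\, s_j^{\,2}.
\]

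The Lemma is now a direct estimate. Since $|t|\le R$ on the domain of integration, the linear functionals $\xi\mapsto s$ and $\xi\mapsto m$ are $1$- and $R$-Lipschitz, respectively, from $L^1(-R,R)$ to $\R$. The quadratic contribution is harmless because $0\le\xi_j\le 1$ forces $s_j\in[0,2R]$, so $|s_1^{\,2}-s_2^{\,2}|\le 4R\,|s_1-s_2|$. Combining these three bounds gives $|\rho_1(\R)-\rho_2(\R)|\le 4R\,\delta$, so any choice $\delta<\epsilon/(4R+1)$ works. There is essentially no obstacle; the only point that merits a brief remark is that the exponent in \eqref{2.5} is holomorphic in $1/z$ near $z=\infty$, which legitimizes the term-by-term matching of the two Laurent expansions.
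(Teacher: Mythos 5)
Your argument is correct, and it is a genuinely different route from the one the paper takes. The paper's proof is a soft argument: the map $\xi\,dx\mapsto\rho$ is continuous from the weak-$*$ topology to the weak-$*$ topology, its domain (the set of measures $\xi\,dx$ with $0\le\xi\le 1$ on $[-R,R]$) is weak-$*$ compact, so the map is uniformly continuous, and $L^1$ distance dominates a metric generating the weak-$*$ topology; testing against the constant function $1$ on $[-R,R]$ then gives the total mass. Your approach instead extracts the closed form $\rho(\R)=Rs+m-\tfrac12 s^2$ with $s=\int\xi$, $m=\int t\xi$, by matching the $1/z$ coefficients of the two Laurent expansions at infinity of $H$, one from \eqref{2.5} and one from \eqref{2.6}; the computation checks out, the $z^0$ coefficient correctly reproduces $A=R-s$, and the bound $0\le s\le 2R$ coming from $0\le\xi\le1$ justifies the Lipschitz estimate $|s_1^2-s_2^2|\le 4R|s_1-s_2|$, yielding $|\rho_1(\R)-\rho_2(\R)|\le 4R\,\|\xi_1-\xi_2\|_{L^1}$. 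Your remark that $H(z)-z-A$ is holomorphic in $1/z$ near $\infty$ is the right justification for term-by-term matching. What you gain over the paper's argument is an explicit Lipschitz constant (so in particular $\delta$ can be taken proportional to $\epsilon$, uniformly in the pair $\xi_1,\xi_2$), whereas the compactness argument only gives a non-effective modulus; what you lose is generality, since the soft argument immediately gives weak-$*$ continuity of all the data rather than just the total mass, which is closer to how the Lemma is actually used in the surrounding proof.
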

\begin{proof}[Proof of Lemma \ref{L4.1}]
As usual, the map $\xi\mapsto \rho$ is continuous if we use the weak-$*$
topology for both the measures $\xi\, dx$ and the $\rho$'s. The domain $\{ \xi\, dx\}$
is compact, so the map is in fact uniformly continuous, and clearly the $L^1$ norm controls
a distance that generates the weak-$*$ topology.
\end{proof}
Let us summarize: $\mu_1(I^c)\le \mu(I^c)$ and $\mu(\R)<\mu_1(\R)+\epsilon$, hence
$\mu_{ac}(I)=\mu(I)<\mu_1(I)+\epsilon$, and thus it suffices to estimate $\mu_1(I)=\mu_{1,ac}(I)$.
This is done by a calculation, which is similar to the one we used above.
First of all, notice that
\begin{equation}
\label{4.3}
\sum_{n=0}^N \frac{s_n}{q^{n+1}d} \le \int_{(d,B)\cap K} \frac{dt}{t} < 2\epsilon .
\end{equation}
Let's now look at $\mu_1(I_n)=\mu_1((q^nd, q^nd+s_n))$
for fixed $n\ge 1$. If we had $\zeta_1=1$ on all of
$I\setminus I_n$, that would lead to a density of the form
\[
\frac{R^2-x^2}{\pi \sqrt{x^2-d^2}} \left( \frac{x-q^n d}{q^n d+s_n-x}\right)^{1/2}
\]
on $x\in (q^nd,q^nd+s_n)$. Clearly, this expression can be estimated by
\begin{equation}
\label{4.4}
\frac{C}{q^nd} \left( \frac{x-q^n d}{q^n d+s_n-x}\right)^{1/2} .
\end{equation}
This might get bigger by a factor of
\[
\left( \prod_{j=0}^{n-1} \frac{(q^n-q^j)d}{(q^n-q^j)d-s_j} \right)^{1/2} =
\left( \prod_{j=0}^{n-1} \left( 1 + \frac{s_j}{(q^n-q^j)d-s_j} \right) \right)^{1/2} ,
\]
due to the presence of intervals where $\zeta_1=1/2$ to the left of $I_n$. However,
$q^n-q^j\ge q^j$ for $j<n$, and $s_j<q^jd/2$ if $\epsilon<1/16$, by \eqref{4.3}, so \eqref{4.3}
now shows that there is a uniform bound
on this factor, independent of $n\in\N$, $B\le 1$ and $d\le d_0$.
So we may work with \eqref{4.4} after all.
By integrating, it then follows that
\[
\mu_1(I_n) \le \frac{C}{q^nd} \int_0^{s_n} \left( \frac{t}{s_n-t} \right)^{1/2}\, dt =
C \frac{s_n}{q^nd} \quad\quad (n\ge 1) .
\]
For $n=0$, similar reasoning applies and yields the (worse) bound $\mu_1(I_0)\le C (s_0/d)^{1/2}$.
We now see from \eqref{4.3} that $\mu_1(I)\le C_1\epsilon^{1/2}+C_2\epsilon$, as desired.
Since we can of course apply similar arguments to estimate $\rho_{d,ac}((-A,-d))$, we have
now established \eqref{estrhoac}.

Our next goal is to show the following: If $x\notin [-A,B]$ (and the limits
defining $H(x)$, $H_d(x)$ exist), then
\begin{equation}
\label{4.61}
|H_d(x)|\le (1+C\epsilon) |H(x)| .
\end{equation}
Since $\ln |H_d/H|= T(\xi_d-\xi)$, it suffices to compare the Hilbert transforms.
Suppose that $x>B$, say. When going from $\xi$ to $\xi_d$, we only changed
$\xi$ on $(-A,B)$, and the Hilbert transform at $x>B$ will only get smaller when we increase
$\xi$ on this interval. Only
on $(-A,d)$ could $\xi_d$ perhaps be smaller than $\xi$. To estimate the possible effect
of this change, let $M=\{x\in (-A,d): \xi(x)\not= 0 \}$. Clearly, if $x\notin M$, then
$\xi_d(x)\ge\xi(x)$, so we can focus on $M$.
Recall that $\xi$ only takes the values $0,1/2,1$, so \eqref{4.6} shows that
$|M|<3\epsilon A$, at least if we only consider $d\le\epsilon A$.
On the other hand, if also $d<B/2$, say, then the Hilbert transform
at an $x>B$ will not increase by more than $2|M|/B$ when going from $\xi$ to $\xi_d$.
If now $A\le 10B$, say, then this is bounded by $60\epsilon$, and, as explained above,
\eqref{4.61} follows.

If $A>10B$, we introduce
\begin{align*}
m_0 & = |M\cap (-B,d)| ,\\
m_n & = |M\cap (-q^nB,-q^{n-1}B)| , \quad\quad n=1,\ldots, N ,
\end{align*}
with $2\le q\le 4$, and then proceed similarly. Now \eqref{4.6} shows that
\[
\sum_{n=0}^N \frac{m_n}{q^nB} < C\epsilon ,
\]
at least if we again insist that $d$ is taken sufficiently small, say $d\le\epsilon$.
However, this sum also bounds the possible increase of the Hilbert transform at an $x>B$,
so we obtain \eqref{4.61} in this case also.

We also saw earlier that \eqref{4.61} implies a corresponding (local) bound
on the measures, by comparing separately the absolutely continuous and singular parts
of $\rho$ and $\rho_d$. So we have that
$\rho_d(S)\le (1+C\epsilon)\rho(S)$ for all Borel sets $S\subset (-A,B)^c$.

Let us summarize what we have achieved so far; for convenience,
we adjust the constants here. Given $\epsilon>0$ and
an open interval $I$ with $\rho(\partial I)=0$ and $0\in I$, we have constructed a family
of new Krein functions $\xi_d$ that agree with $\xi$ outside the interval $(-A,B)\subset I$.
Here, $A,B>0$ and $0<d\le d_0$ can be chosen as small as we wish; in fact, it will usually
be necessary to take these quantities small enough for the following statements to be true.
Recall also that we arrive at these small values in a two step procedure: We first take
$A,B>0$ sufficiently small, in response to the value of $\epsilon>0$ that was given to us.
Then, in a second step, we pick an appropriate value of $d_0>0$, which will typically be
much smaller still (at the very least $d_0\lesssim \epsilon \min\{ A,B\}$).
This means that we're not allowed to decrease $A,B$ once a range for $d$ has been specified,
but it is permitted to make $d_0$ smaller, according to our needs, while keeping $A,B$ fixed.
\begin{Lemma}
\label{L4.2}
The associated measures $\rho_d$ have no singular part on $K\cap (-A,B)$, and they
have the following additional properties, for small enough $A,B,d_0>0$ and
for all $0<d\le d_0$:\\
(a) $|\rho_d(I)-\rho(I)| < \epsilon$;\\
(b) $\rho_{d,ac}((-A,B)) < \epsilon$;\\
(c) $\rho_d(S)\le (1+\epsilon)\rho(S)$ if $S\cap (-A,B)=\emptyset$
\end{Lemma}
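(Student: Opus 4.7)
The plan is to assemble the three numbered items from estimates that have already been derived in the two pages preceding the lemma; most of the work is done and what remains is bookkeeping. First I would dispose of the singular-part claim. By construction $\xi_d \equiv 1/2$ everywhere on $K \cap (-A,B)$, and all jumps of $\xi_d$ inside $(-A,B)$ take place inside gaps of $K$. Consequently the integral criterion for a point mass of $\rho_d$ cannot be met at any point of $K \cap (-A,B)$. Moreover, on this set the density of $\rho_d$ equals $|H_d|/\pi$, which is finite a.e., so there is no singular continuous piece either.

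Part (b) is a direct restatement of the estimate $\rho_{d,ac}((-A,B)) \le C\epsilon^{1/2}$ that was established just above, by combining $|K \cap (-d,d)| < 2d\epsilon$ (from \eqref{4.1}) with the Hilbert-transform majorant on $(-d,d)$ and the analogous dyadic calculation on $(d,B)$ and $(-A,-d)$. Since $C$ depends only on $K$ and $\rho$, one absorbs $\epsilon^{1/2}$ into $\epsilon$ by shrinking the $\epsilon$ we start with. Similarly, (c) is precisely the bound $\rho_d(S) \le (1+C\epsilon)\rho(S)$ for $S \cap (-A,B) = \emptyset$ derived via $|H_d(x)| \le (1+C\epsilon)|H(x)|$ on $(-A,B)^c$ (a consequence of \eqref{4.6} controlling $T(\xi_d-\xi)$) together with the layer-cake identity for singular parts; after rescaling $\epsilon$ the factor $C$ is cosmetic.

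For part (a), the plan is to write
\[
\rho_d(I)-\rho(I) = \bigl[\rho_d(\R)-\rho(\R)\bigr] - \bigl[\rho_d(I^c)-\rho(I^c)\bigr].
\]
The first bracket is bounded by $\epsilon$ via Lemma \ref{L4.1}, since $\|\xi_d - \xi\|_1 \le A+B$ can be pushed below the threshold $\delta$ of that lemma by shrinking $A,B$. For the second bracket, $I^c \subset (-A,B)^c$, so (c) gives $\rho_d(I^c) - \rho(I^c) \le C\epsilon\,\rho(\R)$; the matching lower bound $\rho_d(S) \ge (1-C\epsilon)\rho(S)$ on $(-A,B)^c$ is obtained by rerunning the Hilbert-transform argument with the roles of $\xi$ and $\xi_d$ interchanged, which yields $|H_d(x)|\ge(1-C\epsilon)|H(x)|$ and then the measure comparison as before. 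Combining, $|\rho_d(I)-\rho(I)| \le \epsilon + C\epsilon\,\rho(\R)$, and the usual rescaling of $\epsilon$ delivers (a).

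The only mildly delicate step is ruling out singular continuous mass on the potentially thin set $K \cap (-A,B)$; what buys this is that $\xi_d$ equals $1/2$ on $K\cap(-A,B)$ \emph{identically} (not merely almost everywhere), so the density formula is available pointwise on this set. Everything else is a repackaging of the estimates already at hand.
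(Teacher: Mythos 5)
Your treatment of parts (b) and (c) correctly identifies that they are restatements of \eqref{estrhoac} and \eqref{4.61} (with the usual rescaling of $\epsilon$), and this is indeed what the paper does. But the first claim of the Lemma --- that $\rho_d$ has no singular part on $K\cap(-A,B)$ --- is the heart of the matter, and your argument for it is incorrect.

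You claim that since $\xi_d\equiv 1/2$ on $K\cap(-A,B)$, no point mass can sit there, and since the a.c.\ density $|H_d|/\pi$ is finite a.e., there is no singular continuous part either. Both inferences fail. First, the criterion in \cite[pg.\ 201]{MP} or \cite[Lemma 2.4]{PolRem} for a point mass at $x$ requires $\xi$ to jump from $0$ to $1$ at $x$ only when $x$ lies in a gap; at a point of $K$, having $\xi=1/2$ in no way forbids a point mass. Indeed, the \emph{original} $\xi$ already equals $1/2$ everywhere on $K$ (this is what reflectionlessness means, Proposition \ref{P2.1}), and yet $\rho$ has the point mass $v_1\delta_{x_1}$ at $x_1\in K$ --- that is precisely what Theorem \ref{T4.1} is trying to remove. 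So your premise would prove too much and is visibly contradicted by the setup. Second, the observation that the a.c.\ density is finite a.e.\ is vacuous: the density of any finite a.c.\ measure is finite a.e., and this says nothing about the singular continuous part, which lives on a Lebesgue-null set where the nontangential boundary values of $|H_d|$ blow up. The actual proof in the paper is subtle: one compares $H_d$ with the function $H_1$ obtained by setting $\xi=1/2$ on all of $(-d,d)$, shows $\liminf_{y\to 0+}|H_1(x+iy)|/|H_d(x+iy)|>0$ on $K$ via the monotonicity of the (truncated) Hilbert transform, and then invokes the Poltoratski theorem on the $\rho_{d,s}$-a.e.\ limit of $H_1/H_d$ to conclude $\rho_{d,s}((-d,d)\cap K)=0$ from the fact that $\rho_1$ is purely a.c.\ on $(-d,d)$. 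Outside $(-d,d)$ one uses instead that $\xi_d$ avoids one of the values $0,1$. None of this is captured by your argument.

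Your route to part (a) is also weaker than you present it. Decomposing $\rho_d(I)-\rho(I)$ as $[\rho_d(\R)-\rho(\R)]-[\rho_d(I^c)-\rho(I^c)]$ and invoking Lemma \ref{L4.1} for the first bracket is fine, but for the second bracket you need a two-sided bound, and the reverse inequality $\rho_d(S)\ge(1-C\epsilon)\rho(S)$ is not obtainable by simply interchanging the roles of $\xi$ and $\xi_d$. The upper bound \eqref{4.61} rests on the fact that the only place where $\xi_d<\xi$ is the set $M\subset(-A,d)$, which is far from $x>B$, giving the factor $2|M|/B$. For the reverse one must control the decrease of the Hilbert transform coming from $\{\xi_d>\xi\}\subset(d,B)$, and for $x$ close to $B$ the kernel $1/(x-t)$ is not dominated by $1/t$, so \eqref{4.6} does not directly help; one has to exploit separately that $S=I^c$ is at a fixed positive distance from $(-A,B)$. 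The paper avoids this entirely: since $\|\xi_d-\xi\|_1\le A+B$ and, by the uniform continuity argument underlying Lemma \ref{L4.1}, this $L^1$ bound controls the weak-$*$ distance between $\rho_d$ and $\rho$, the fact that $\rho(\partial I)=0$ immediately gives $|\rho_d(I)-\rho(I)|<\epsilon$ once $A,B$ are small. This is both simpler and cleaner than what you propose.
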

\begin{proof}
We will again compare $\rho_d$ with the measure that is obtained when we replace
$\xi_d$ by the constant value $1/2$ on $(-d,d)$.
Denote the corresponding $H$ function by $H_1$. We claim that then
\begin{equation}
\label{4.81}
\liminf_{y\to 0+} \frac{|H_1(x+iy)|}{|H_d(x+iy)|} > 0
\end{equation}
for all $x\in K$. This follows because the logarithm of this fraction differs
from the corresponding truncated Hilbert transform $T_y(\xi_1-\xi_d)(x)$ by at most
a fixed constant; here, $T_y$ is defined as
\[
(T_y g)(x) = \int_{|t-x|>y} \frac{g(t)\, dt}{t-x} .
\]
The Hilbert transform
goes up when replace $\xi_d$ by $1/2$ on a gap. We used this important observation earlier,
and the truncation does not seriously affect
the argument; it introduces another constant.

Now we compare \eqref{4.81} with the
result from \cite{Pol1} that if $\rho_1 = f\rho_{d,s} + \sigma$, where $\sigma\perp\rho_{d,s}$, then
\[
\lim_{y\to 0+} \frac{H_1(x+iy)}{H_d(x+iy)} = f(x)
\]
for $\rho_{d,s}$-almost every $x\in\R$. So if we had $\rho_{d,s}((-d,d)\cap K)>0$, then \eqref{4.81}
would force $\rho_1$ to have a singular part on $K$ also, but clearly $\rho_1$ is purely
absolutely continuous on $(-d,d)$. Thus $\rho_{d,s}((-d,d)\cap K)=0$.

The Krein function avoids one of the values $0$, $1$ on $(-A,-d)$ and $(d,B)$,
so $\rho_{d,s}$ gives zero weight to these sets, and clearly $\pm d$ can not be
point masses for the chosen arrangement $\xi_d$. Hence $\rho_{d,s}((-A,B)\cap K)=0$.

Part (a) follows from the fact that $\|\xi_d-\xi\|_1\le A+B$, and this
$L^1$ norm controls the distance of $\rho_d$ and $\rho$ in a metric that generates
the weak-$*$ topology, so we just need to take $A,B>0$ small enough. See also the
proof of Lemma \ref{L4.1} for this argument. Part (b) is \eqref{estrhoac},
and part (c) was discussed in the paragraphs preceding the formulation of the Lemma.
\end{proof}
Now return to \eqref{4.9b} and our goal \eqref{estnuplus}; $\nu'_+$
will be chosen as a measure $\nu_{d,+}$ for small $d>0$ and suitable parameters
$\sigma_j(d)$, $g_j(d)$. We can come close to
$\nu_+(I)$ with finite sums in \eqref{4.9b}. More precisely, we pick $N\in\N$
so that, after relabeling if necessary, the expression
\begin{equation}
\label{4.68}
\frac{1}{2}\rho(I) + \left( g_1-\frac{1}{2} \right) v_1 + \sum_{j=2}^N \left(g_j-\frac{1}{2}
\right) v_j + \sum_{j=1}^N \left( \sigma_j - \frac{1}{2} \right) w_j
\end{equation}
differs from $\nu_+(I)$ by not more than $\epsilon$. Here, we of course assume
that $x_j,\mu_j\in I$ for
$j\le N$, and there may be (infinitely many) other point masses in $I$, but these have
total mass less than $\epsilon$. So now our task is to make sure that $\nu_{d,+}(I)$
is close to \eqref{4.68}.

We first demand that $A,B$ are so small that all these $x_j$ (for $2\le j\le N)$
and $\mu_j$ (for $1\le j\le N$) are well separated from
$(-A,B)$, say
\[
A+B < \frac{1}{D}\, \min\{ |x_j|, |\mu_j| \} ,
\]
for a large constant $D$. Then the argument that was used to establish
Lemma \ref{L4.2}(c) also shows that if we take $D$ large enough here (equivalently:
$A,B$ small enough), then
\[
1- \frac{\epsilon}{N} \le \frac{v_j(d)}{v_j} \le 1+ \frac{\epsilon}{N}
\]
for $j=2,\ldots, N$. Here,
$v_j(d)=\rho_d(\{ x_j\})$. Again, this follows because the Hilbert transform of $\xi$
doesn't change much at $x=x_j$ when we pass to $\xi_d$. A similar two-sided estimate holds
for $w_j(d)/w_j$ for $j=1,\ldots, N$, where $w_j(d)=\rho_d(\{ \mu_j \})$.

If we now take
$g_j(d)=g_j$ and $\sigma_j(d)=\sigma_j$ for $j\le N$, then the corresponding contributions
to $\nu_{d,+}$ will differ from the last two sums from \eqref{4.68} by at most $\epsilon$,
and this will be true uniformly in $d\le d_0$. Moreover,
by Lemma \ref{L4.2}(a), $\rho_d(I)$ will be close to $\rho(I)$ for all small $d$.

The other point masses of $\rho_d$ on $I\setminus (-A,B)$ (if any) can be controlled
with the help of Lemma \ref{L4.2}(c). Indeed, we immediately obtain that
\[
\sum_{j>N} (v_j(d)+w_j(d)) < (1+\epsilon)\epsilon < 2 \epsilon ;
\]
here, the sum is really taken only over those $j$ for which the corresponding
point mass (that is, $x_j$ or $\mu_j$) lies in $I\setminus (-A,B)$.

So, to finish the proof of our claim that
we can get close to \eqref{4.68} with $\nu_{d,+}(I)$, it just remains to show that $d\in (0,d_0]$
and the $\sigma_j(d)$ for $|\mu_j(d)|<d$ can be chosen so that
\begin{equation}
\label{4.82}
\left| \left( g_1 - \frac{1}{2} \right)v_1 - \sum_{|\mu_j(d)|<d} \left( \sigma_j(d)-\frac{1}{2}
\right) w_j(d) \right| < C\epsilon .
\end{equation}
Recall that the point masses from this latter sum make up the complete singular part of
$\rho_d$ on $(-A,B)$. So, by Lemma \ref{L4.2}(b),
\begin{equation}
\label{4.83}
\rho_d((-A,B)) - \epsilon \le \sum_{|\mu_j(d)|<d} w_j(d) \le \rho_d((-A,B)) .
\end{equation}
Moreover, by combining parts (a) and (c) of Lemma \ref{L4.2}, we see that
$\rho_d((-A,B))\ge \rho((-A,B)) - C\epsilon$, and, in particular,
\begin{equation}
\label{4.84}
\rho_d((-A,B))\ge v_1 - C\epsilon .
\end{equation}
We will now describe our choice of the parameters $\sigma_j(d)\in \{ 0,1\}$ from \eqref{4.82}
by specifying the index set $M$ on which $\sigma_j(d)=1$. Then clearly
\[
\sum_{|\mu_j(d)|<d} \left( \sigma_j(d)-\frac{1}{2} \right) w_j(d)
= \sum_{j\in M} w_j(d) - \frac{1}{2} \sum_{|\mu_j(d)|<d} w_j(d) .
\]
So, taking \eqref{4.83}, \eqref{4.84} into account, we now see that it suffices to
show that for arbitrary $0\le\alpha\le 1$, there exists a choice of $d>0$ and $M$ that makes
\[
\left| \sum_{j\in M} w_j(d) - \alpha\rho_d((-A,B)) \right| < 2\epsilon .
\]
Now for fixed $d=d_0$, we can clearly achieve that
\begin{equation}
\label{4.85}
\sum_{j\in M} w_j(d_0) \ge \alpha\rho_{d_0}((-A,B)) - 2\epsilon ,
\end{equation}
by just taking a large enough index set $M$. We can and will still insist that $M$ be finite.
Now decrease $d$. The $w_j(d)$ are continuous functions, and each of the finitely many
point masses $\mu_j(d)$ with $j\in M$ will eventually
merge into an endpoint of its gap. For example, if $\mu_j(d)>0$, then $\mu_j(d)=(a_j+b_j)/2$ is
constant as long as $d\ge b_j$, but then starts decreasing, and
finally $\mu_j(a_j)=a_j$. When this happens, we get $w_j=0$. This follows because
with the chosen configuration for $\xi_d$, these endpoints can not be point masses of $\rho_d$.
So there is a smaller positive value $d_1<d_0$ for which the left-hand side of \eqref{4.85} will
become zero. Since $\rho_d((-A,B))$ also is a continuous function of $d$,
some intermediate $d\in [d_1,d_0]$ will work.
This finishes the main part of the argument.

We now discuss the case where $0$ is an endpoint of a gap. The case where there are two such gaps,
say $(-a,0)$ and $(0,b)$, is elementary. In this case, $\xi=\chi_{(0,b)}$ on $(a,b)$.
We can now use a (simplified) version of the Splitting Lemma \ref{L3.3} (where
the small band in the middle has been shrunk to a point)
to handle this case by splitting the point mass at $x=0$ into two nearby point masses in the two gaps,
with the ratio of their weights at our disposal.

Finally, if $(-a,0)$ is a gap, but no gap has $0$ as its left endpoint, then we can run a one-sided
version of the argument given above. Note that $\xi=0$ on $(-a,0)$; otherwise, $x=0$ couldn't be
a point mass of $\rho$. We now take $\xi_d(x)=\xi(x)$ for $x<0$ and modify $\xi(x)$ only
for $x>0$.

This whole procedure can also be used to remove finitely many mass points $x_1,\ldots, x_N\in K$.
We just pick disjoint small neighborhoods of these points and then modify $\xi$ and choose the
parameters $g_j$, $\sigma_j$ in the same way as above on each of these neighborhoods separately.
Note that what we do on one such set will have a negligible effect on what happens on the other
sets because these are separated and thus the Hilbert transform of $\xi$ on one of the sets
will not change much when we modify $\xi$ on the other sets (here it is important that the
sets are chosen to be small compared to their separations).

We have now shown the following: Given $\epsilon>0$ and $J_0\in\RR_0(K)$ with $\rho_{0,sc}(K)=0$,
there exists $J\in\RR_0(K)$ so that $d(J_0,J)<\epsilon$ and $\rho_s(K)<\epsilon$. (We in fact
also know that $\rho_{sc}=0$, but this will not be used.)

To completely remove the singular part of $\rho$ on $K$ here, we proceed as follows.
We know that, as usual,
\begin{align*}
\rho & = \rho_{ac} + \chi_K\rho_s + \sum w_j \delta_{\mu_j} ,\\
\nu_+ & = \frac{1}{2}\rho + \left( g-\frac{1}{2} \right)\chi_K\rho_s
+ \sum \left( \sigma_j - \frac{1}{2} \right) w_j \delta_{\mu_j} ,
\end{align*}
and we would like to come close to $\nu_+$ with a new measure $\nu'_+$ whose corresponding
$\rho'$ measure satisfies $\rho'_s(K)=0$. The argument is quite similar to
the ones given above.

We would like to define new Krein functions $\xi_n$ by putting $\mu_j^{(n)}=b_j$ for $j>n$ and
$\mu_j^{(n)}=\mu_j$ for $1\le j \le n$. However, this runs into minor technical problems, so
the actual definition will be slightly different. We would like to achieve that $\rho_{n,s}(K)=0$,
and this is almost, but not quite, true with our preliminary definition of $\xi_n$.

Notice that $\rho_{n,s}$ does give zero weight to
every open set on which $\xi_n$ avoids the value $1$. So, since $\xi_n=0$ on all but
finitely many gaps, the singular part of $\rho_n$ on $K$ is supported by the set
$\{ a_{j_1(n)},\ldots , a_{j_N(n)} \}$, where this list contains all the left endpoints
of those gaps on which $\xi_n=1$ (if any). We now exclude the possibility
of point masses of this type by preemptively setting
$\mu=a+\delta_n$, with a very small $\delta_n>0$, if $a=a_{j_k(n)}$ is one of these points.
If the $\delta_n$ are chosen small
enough here, then $\|\xi_n-\xi\|_1\to 0$ and thus also $\rho_n\to\rho$ in the weak-$*$ topology.

Next, fix $N_0\in\N$ so that
\begin{equation}
\label{4.73}
\sum_{j>N_0} w_j < \epsilon .
\end{equation}
By relabeling, if necessary, we may assume that $w_j>0$ for $j=1,\ldots ,N_0$ (equivalently,
the corresponding $\mu_j$'s lie in the interiors of their gaps), and we
then define $\sigma_j^{(n)}=\sigma_j$ for $j=1,\ldots, N_0$. To compare $\nu_+$ and $\nu_{n,+}$
in the weak-$*$ topology, we again compare the weights these measures give to fixed intervals
$I$ with $\rho(\partial I)=0$. If $n\ge N_0$, then
\begin{equation}
\label{4.71}
\nu_{n,+}(I) = \frac{1}{2}\rho_n(I) + \sum_{j\le N_0;\mu_j\in I} \left( \sigma_j-\frac{1}{2}
\right) w_j^{(n)} + \sum_{j>N_0; \mu_j^{(n)}\in I} \left( \sigma_j^{(n)}-\frac{1}{2}
\right) w_j^{(n)} ,
\end{equation}
and here $\rho_n(I)\to\rho(I)$, $w_j^{(n)}\to w_j$ as $n\to\infty$ for $j=1,\ldots, N_0$.
Since also $\rho_s(K)<\epsilon$, this implies that the first two terms from the right-hand side
of \eqref{4.71} will differ from $\nu_+(I)$ by not more than $\epsilon$ for all large $n$.
So we have to show that the last sum from \eqref{4.71} can be made small by choosing
the corresponding $\sigma_j^{(n)}$ appropriately. This follows as in the proof of
Theorem \ref{T4.6} from the fact that
\begin{equation}
\label{4.72}
w_j^{(n)}< \epsilon \quad\quad\quad (j>N_0, n\ge n_0) ;
\end{equation}
here $n_0$ must be taken sufficiently large. Indeed, \eqref{4.72} guarantees
that the last sum from \eqref{4.71} can be changed from positive values
(all $\sigma$'s equal to $1$) to negative values in small steps of size $<\epsilon$,
so there is a configuration for which it will be within distance $\epsilon$ from $0$, as desired.

To prove \eqref{4.72}, assume the contrary. Then we find $w_{j_n}^{(n)}\ge\epsilon$
with $j_n>N_0$ for certain arbitrarily large values of $n$.
We may assume that the corresponding $\mu_{j_n}^{(n)}$
converge, to $x\in\R$, say. Since $\rho_n\to\rho$, this implies
that $\rho(\{ x\})\ge \epsilon$. This is a contradiction because $\rho_s(K)<\epsilon$
and \eqref{4.73} holds, so the only point masses of $\rho$ whose weight could possibly
be $\ge\epsilon$ are $\mu_1,\ldots,\mu_{N_0}$, but these are in the interiors of their
gaps, so $\mu_{j_n}^{(n)}$ with $j_n>N_0$
can certainly not converge to one of these.
\end{proof}
\section{Sets of measure zero}
On a zero measure set, the condition of being reflectionless becomes vacuous, and this
changes the character of our results. Formally, they remain true, though, and we now discuss
them one by one in this new situation. Since this discussion is somewhat removed from
our main topic in this paper, we will be extremely sketchy here.

If $|B|=0$, then Theorem \ref{T1.1} says that any $J\in\J$ can be approximated by periodic
$J_n\in\J_R$ whose spectra satisfy $|\sigma(J_n)|\to 0$. We can do this directly, as follows:
Let $a(j),b(j)$ be the coefficients of $J$, and put $a_n(j)=a(j)$, $b_n(j)=b(j)$ for
$|j|\le n$. This already guarantees that $J_n\to J$, provided the remaining coefficients are
chosen so that there is a uniform bound on their size. We now put $a_n(n+1)=0$, $b_n(n+1)=0$
(the value of $b$ is irrelevant here), and extend periodically. Then $J_n$ is an infinite
sum of copies of a finite-dimensional operator, so $\sigma(J_n)$ is a finite set.
This proves the trivial measure zero version of Theorem \ref{T1.1}. If we want to,
we can in fact avoid a zero coefficient $a_n(j)$ here;
we then just assign a very small positive value to $a_n(n+1)$.

Proposition \ref{P1.1} remains true for $|K|=0$, with the same proof. Theorem \ref{T1.3}
also continues to hold unless $K$ consists of exactly two points, simply because its hypothesis
is never satisfied if $|K|=0$ and $K$ has at least three points.

As for the next set of results, our treatment from above still applies if $|K|=0$.
In fact, as we will see, things become much easier now.
If $|K|=0$ and any $J\in\RR_0(K)$ is given, then we can use the methods from Section 5
to produce, for any $\epsilon>0$, a $J'\in\RR_0(K)$ so that $\rho'_{pp}(K)<\epsilon$ and
$d(J,J')<\epsilon$. Having done that, we can now run the construction from the last part of
Section 5 to find a $J''\in\RR_0(K)$ with $\rho''(K)=0$ and $d(J',J'')<\epsilon$.
The key new feature here is the fact that whether or not $\rho'_{sc}(K)$ was small to start with,
this measure can never be transformed into absolutely continuous measure. The corresponding weight
can only (approximately) go into the point masses in the gaps if $|K|=0$. Much of the analysis
of Section 5 centered around this question, how do we prevent singular measure on $K$ from
becoming absolutely continuous measure on $K$ under a small perturbation, and this problem has
completely disappeared now.

A new technical issue arises here due to the possibility of finitely supported measures
$\rho$, $\nu_+$; in fact, $K$ itself could be a finite set.
Recall from Section 2 that the spectral data $(\xi,\nu_+)$ do \textit{not }uniquely
determine a Jacobi matrix $J$ in this case, so it is not enough to construct such data
that are close to the ones corresponding to the given Jacobi matrix.
However, we can simply observe that a Jacobi matrix with finite spectrum splits into
finite-dimensional blocks, and we can then discuss these blocks separately, using the
techniques outlined above. We leave the matter at that and hope that this very brief
sketch has given an impression of how the following result could be proved.
\begin{Theorem}
\label{T6.1}
Let $K\subset\R$ be a non-empty compact set with $|K|=0$. Recall that
\[
\RR_0(K) = \{ J\in\J : \sigma(J)\subset K \} .
\]
For every $J\in\RR_0(K)$, there are $J_n\in\RR_0(K)$ with $\rho_n(K)=0$, so that
$d(J_n,J)\to 0$.
\end{Theorem}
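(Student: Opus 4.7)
The plan is to exploit the two main constructions from the proof of Theorem \ref{T4.1} in Section 5, both of which collapse to much simpler forms when $|K|=0$. Since $|K|=0$, any $\rho\in\mathcal H(K)$ automatically has $\rho_{ac}(K)=0$, so $\rho(K)=\rho_{sc}(K)+\rho_{pp}(K)$, and the task reduces to eliminating both singular parts on $K$ while only perturbing $J$ slightly in the metric $d$. As in the proof of Theorem \ref{T4.1}, it suffices to produce, for every $\epsilon>0$, some $J''\in\RR_0(K)$ with $d(J,J'')<2\epsilon$ and $\rho''(K)=0$; taking $\epsilon=1/n$ then yields the required sequence. The central observation that makes the $|K|=0$ case easy is exactly the one the authors highlight: small perturbations cannot convert singular mass on $K$ into absolutely continuous mass on $K$, because any absolutely continuous measure is supported on a set of positive Lebesgue measure and $K$ has none. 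Hence mass near $K$ can only be pushed into the gaps as new point masses $w_j\delta_{\mu_j}$, which is precisely the direction we want to go.

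Step one is to adapt the first part of the Section 5 proof of Theorem \ref{T4.1}, which removes finitely many distinguished point masses $x_j\in K$ of $\rho$ by modifying $\xi$ on small neighborhoods $(-A,B)$ of each $x_j$ and splitting each $v_j\delta_{x_j}$ into point masses in the neighboring gaps. In the $|K|>0$ case this created a small amount of absolutely continuous mass on $K\cap(-A,B)$ (estimated by $C\epsilon^{1/2}$ in Lemma \ref{L4.2}(b)); when $|K|=0$, that ac contribution is zero identically, so the only bookkeeping required is to choose enough dominant $x_j$'s to absorb all but $\epsilon$ of the total point mass on $K$. Applying Proposition \ref{P2.4} then gives $J'\in\RR_0(K)$ with $d(J,J')<\epsilon$ and $\rho'_{pp}(K)<\epsilon$, and of course $\rho'_{ac}(K)=0$ automatically.

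Step two is to apply the second construction from that same proof, which removes the residual singular part on $K$. Enumerate the gaps $(a_j,b_j)$ of $K$, pick $N_0$ with $\sum_{j>N_0}w'_j<\epsilon$, and define $\xi_n$ by keeping $\mu_j^{(n)}=\mu'_j$ for $j\le N_0$ while setting $\mu_j^{(n)}=a_j+\delta_n$ for $j>N_0$, with $\delta_n>0$ chosen small enough that $\|\xi_n-\xi'\|_1\to 0$ and thus $\rho_n\to\rho'$ in weak-$*$. The small shift $\delta_n$ prevents $\rho_n$ from developing a singular part at those finitely many left endpoints $a_{j_k(n)}$ where $\xi_n=1$ immediately to the right; since $\xi_n$ avoids the value $1$ on every other gap, this guarantees $\rho_{n,s}(K)=0$, and combined with $\rho_{n,ac}(K)=0$ we get $\rho_n(K)=0$. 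Matching $\nu_+$ proceeds exactly as in the original Section 5 argument: the parameters $\sigma_j^{(n)}=\sigma'_j$ are fixed for $j\le N_0$, and the remaining $\sigma_j^{(n)}$ can be adjusted in small steps to match $\nu'_+(I)$ on test intervals, using the analogue of \eqref{4.72} (which holds here for exactly the same reason: any cluster point of the $\mu_{j_n}^{(n)}$ with large weight would yield a point mass of $\rho'$ of weight $\ge\epsilon$, but those are exhausted by $\mu'_1,\dots,\mu'_{N_0}$).

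The one genuine obstacle, already flagged by the authors in the two-paragraph sketch preceding the statement, is the degenerate case where the spectral data $(\xi,\nu_+)$ fail to determine $J$ uniquely; this happens precisely when some $a(n)=0$, for instance when $K$ itself is a finite set. In that case $J$ splits as an orthogonal sum of finite matrices on the invariant pieces of $\ell^2(\Z)$, and the argument must be applied block by block: each finite block has a finite spectrum contained in $K$, and any small perturbation of its eigenvalues that keeps them inside $K$ (possible because each eigenvalue in $K$ can be slightly shifted to a nearby point of $K$, or left alone if $K$ is isolated there) produces an approximating block with pure point measure whose support still lies in $K$, so that $\rho_n(K)$ can trivially be arranged. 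Aside from this finite-block technicality, all the weight-redistribution estimates from Section 5 apply verbatim and are actually easier, because the troublesome $\rho_{d,ac}((-A,B))$ term simply does not exist when $|K|=0$.
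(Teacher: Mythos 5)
Your overall two-step strategy matches the paper's sketch closely: first reduce $\rho_{pp}(K)$ below $\epsilon$ using the ``remove finitely many point masses on $K$'' construction from the proof of Theorem~\ref{T4.1}, then invoke the second construction from that proof to eliminate the singular part on $K$ entirely, exploiting the fact that $|K|=0$ prevents singular mass from being converted into absolutely continuous mass on $K$. The observation that the estimate of Lemma~\ref{L4.2}(b) is trivially $0$ when $|K|=0$ is also the right simplification to notice, and your handling of the finite-block degeneracy is in the spirit of the paper's remark.

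However, the construction you propose in Step two is garbled in a way that defeats the conclusion $\rho_{n,s}(K)=0$. By the convention \eqref{2.4}, the Krein function on the gap $(a_j,b_j)$ is $\xi_n=\chi_{(\mu_j^{(n)},b_j)}$; so putting $\mu_j^{(n)}=a_j+\delta_n$ for $j>N_0$ makes $\xi_n=1$ on $(a_j+\delta_n,b_j)$, that is, on \emph{almost all} of each such gap. This is the opposite of what you assert a sentence later, namely that ``$\xi_n$ avoids the value $1$ on every other gap.'' With your $\xi_n$, the value $1$ is attained arbitrarily close to every accumulation point of the gaps $j>N_0$ in $K$, so the fact that $\rho_{n,s}$ vanishes on open sets where $\xi_n$ avoids $1$ gives you nothing, and there is no apparent reason why these accumulation points could not carry singular continuous mass. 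What the paper actually does at the end of the proof of Theorem~\ref{T4.1} is to set $\mu_j^{(n)}=b_j$ for $j>N_0$ (hence $\xi_n=0$ on those gaps), and to apply the shift $a_j\mapsto a_j+\delta_n$ only to the finitely many retained gaps $j\le N_0$ for which $\mu'_j=a_j$. Then $\xi_n$ genuinely avoids the value $1$ on a neighborhood of every point of $K$ except near these finitely many shifted jumps, and $\rho_{n,s}(K)=0$ follows. You should adopt that prescription.

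One further point worth making explicit: you only control $\rho'_{pp}(K)<\epsilon$ after Step one, and $\rho'_{sc}(K)$ may remain of order one. Consequently the quantity $(g-\tfrac{1}{2})\chi_K\rho'_s$ that the third sum in the analogue of \eqref{4.71} must match can be large, unlike the $|K|>0$ situation where it is $O(\epsilon)$. Your appeal to the analogue of \eqref{4.72} is exactly the right ingredient, but the argument must be run so as to hit a possibly large target: one uses that $\sum_{j>N_0,\,\mu_j^{(n)}\in I} w_j^{(n)}\to\rho'_s(I\cap K)$ (plus an $O(\epsilon)$ error) and then, by \eqref{4.72}, sweeps the partial sums of $(\sigma_j^{(n)}-\tfrac{1}{2})w_j^{(n)}$ through the required interval in steps smaller than $\epsilon$. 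This is implicit in what you wrote, but since it is precisely the place where the paper's observation about ``weight can only go into the gaps'' is doing its work, it deserves to be said in full.
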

By following the pattern described at the beginning of Section 5,
this improved version of Theorem \ref{T4.1} in a special case then leads
to improved versions of Theorems \ref{T1.4}, \ref{T1.5}. In fact, the treatment
again becomes much simpler, and one could also make more specific statements.
For example, if a sequence $K_n$ with $\delta(K_n, K)\to 0$ is given and $|K|=0$,
then one could pick finite subsets $F_n\subset K_n$ so that still $\delta(F_n,K)\to 0$.
This means that for any $J$ with $\sigma(J)\subset K$, there exists a sequence
of operators $J_n$ whose spectra are \textit{finite }subsets of the $K_n$, and
$J_n\to J$.

Finally, notice that if $|K|=0$
and $h(K_n,K)\to 0$, then $|K_n|\to 0$, so, as noted in the introduction, $h(K_n,K)\to 0$
implies that $\delta (K_n,K)\to 0$ in this case.

\end{document}